\newcommand{\R}{\mathbb{R}}
\newcommand{\C}{\mathbb{C}}
\newcommand{\K}{\mathbb{K}}
\DeclareMathOperator*{\tNull}{\text{Null}_t}
\DeclareMathOperator*{\Null}{\text{Null}}
\DeclareMathOperator*{\tdiag}{\text{t}-\diag}
\DeclareMathOperator*{\tspan}{\text{span}_t}
\DeclareMathOperator*{\tdet}{Det}
\DeclareMathOperator*{\tdim}{\dim_t}
\newtheorem{theo}{Theorem}
\newtheorem{defi}[theo]{Definition}
\title{Spectral computation with third-order tensors \\ using the \lowercase{t}-product}
\author{A. El Hachimi\footnotemark[2] \thanks{Mohammed VI Polytechnic University, Green 
City, Morocco.}
\and 
K. Jbilou\footnotemark[1] \thanks{Universit\'e du Littoral Cote d'Opale, LMPA, 50 rue 
F. Buisson, 62228 Calais-Cedex, France.}
\and 
A. Ratnani\footnotemark[1]
\and 
L. Reichel\thanks{Department of Mathematical Sciences, Kent State University, Kent, OH 
44242, USA.}
}
\begin{document}
\maketitle 

\begin{abstract}
The tensor t-product, introduced by Kilmer and Martin \cite{kilmer1}, is a powerful tool
for the analysis of and computation with third-order tensors. This paper introduces 
eigentubes and eigenslices of third-order tensors under the t-product. The eigentubes and 
eigenslices are analogues of eigenvalues and eigenvectors for matrices. Properties of
eigentubes and eigenslices are investigated and numerical methods for their computation 
are described. The methods include the tensor power method, tensor subspace iteration, and
the tensor QR algorithm. Computed examples illustrate the performance of these methods.
\end{abstract}

\begin{keywords}
tensor, t-product, eigentube, eigenslice, tensor power method, tensor subspace iteration, 
tensor QR algorithm. 
\end{keywords}

\section{Introduction}
Tensors are high-dimensional generalizations of matrices. They find many applications 
in science and engineering including in image processing, data mining, computer vision, 
network analysis, and the solution of partial differential equations; see
\cite{kilmer-compression,kolda2009tensor,BNKR,boubekraoui,elha1,elha2,BKS,CRZT,elhanr1,
pagerank,RU1} and references therein. Several different tensor products have been proposed
in the literature. The $n$-mode product is one of the most commonly used products; it 
defines the product of a tensor and a matrix of appropriate sizes; see 
\cite{kolda2009tensor,canonical_unfold}. The CP and Tucker tensor decompositions use this 
product; see \cite{kolda2009tensor,BKS2}. The oldest tensor product is probably the 
Einstein product \cite{Einstein_product}; it has recently been applied to color image and 
video processing \cite{elguide2}. The newest tensor product is the t-product for 
third-order tensors, which was proposed by Kilmer et al. \cite{kilmer2013third,kilmer1}. 
This product has found many applications and has been generalized in several ways; see 
\cite{kernfeld2015tensor,kilmer-compression,elha1,elha2,dufrenois,elguide1,elha3,elichi1,
MSL,RU1,RU2}. The t-product allows natural generalizations of many matrix factorizations 
including the singular value decomposition, and the QR and Choleski factorizations to 
third-order tensors; see \cite{kilmer2013third,kilmer1,RU3}.

Several notions of eigenvalues for a tensor have been described in the literature. They 
include the C-eigenvalues, H-eigenvalues, and Z-eigenvalues; see \cite{C-eigenvalue,QL}.
The definitions depend on the tensor product used. The eigenvalues in these definitions 
are scalars. We will use the t-product and introduce eigentubes and eigenslices, which are
analogues for third-order tensors of eigenvalues and eigenvectors for matrices, 
respectively. It is the purpose of this paper to discuss some properties of eigentubes and
eigenslices, and describe algorithms for their computation.

The organization of this paper is as follows: Sections \ref{sec 2} reviews properties of 
the t-product, Section \ref{sec 3} describes some tensor decompositions, and Section 
\ref{sec 4} discusses some properties of eigentubes and eigenslices. Numerical methods for
computing a few or all eigentubes and associated eigenslices of a tensor are presented in 
Section \ref{sec 5}. A few computed examples are presented in Section \ref{sec 6}, and 
Section \ref{sec7} contains concluding remarks.

\section{The t-product for third order tensors}\label{sec 2}
This section reviews definitions and results by Kilmer et al. 
\cite{kilmer2013third,kilmer1} and uses notation from there, as well as from Kolda and 
Bader \cite{kolda2009tensor}. Consider the third-order tensor
$\mathcal{A}=[\mathcal{A}_{i,j,k}]\in\C^{\ell\times p\times n}$. A \emph{slice} of 
$\mathcal{A}$ is a 2D section obtained by fixing any one of the indices. Using MATLAB-type
notation, $\mathcal{A}_{i,:,:}$, $\mathcal{A}_{:,j,:}$, and $\mathcal{A}_{:,:,k}$ stand 
for the $i$th horizontal, $j$th lateral, and $k$th frontal slices of $\mathcal{A}$, 
respectively. We also denote the $j$th lateral slice by $\vec{\mathcal{A}}_j$. It is a 
tensor and is referred to as a \emph{tensor column}. Sometimes it is convenient to 
identify $\vec{\mathcal{A}}_j$ with a matrix. The $k$th frontal slice, 
$\mathcal{A}_{:,:,k}$ is also denoted by $\mathcal{A}^{(k)}$ and is a matrix. A 
\emph{fiber} of a third order tensor $\mathcal{A}$ is a 1D section obtained by fixing any
two of the indices. Thus, $\mathcal{A}_{:,j,k}$, $\mathcal{A}_{i,:,k}$, and 
$\mathcal{A}_{i,j,:}$ denote mode-1, mode-2, and mode-3 fibers (tubes), respectively. We
will use the notation $\K^{\ell\times p}_n=\C^{\ell\times p\times n}$ for the space of 
third-order tensors, $\K^\ell_n=\C^{\ell\times 1\times n}$ stands for the space of lateral
slices, and $\K_n=\C^{1\times 1\times n}$ denotes the space of tubes. Many of the tensors
considered will be \emph{square}, i.e., $\ell=p$.

The inner product of two third-order tensors 
$\mathcal{A},\mathcal{B}\in\K^{\ell\times p}_n$ is given by
\[
\langle\mathcal{A},\mathcal{B}\rangle=\sum_{i,j,k=1}^{\ell,p,n} 
\mathcal{A}_{i,j,k}\bar{\mathcal{B}}_{i,j,k},
\]
where the bar denotes complex conjugation, and the Frobenius norm of $\mathcal{A}$ is 
defined as
\[
\left\Vert\mathcal{A}\right\Vert_F=\langle\mathcal{A},\mathcal{A}\rangle^{1/2}.
\]
We note that a third-order tensor $\mathcal{A}\in\K^{\ell\times p}_n$ can be represented 
as
\[
\mathcal{A}=
\left[\vec{\mathcal{A}}_1,\vec{\mathcal{A}}_2,\ldots,\vec{\mathcal{A}}_p\right].
\]

The Discrete Fourier Transform (DFT) is helpful for the efficient evaluation of the 
t-product of two tensors. The DFT of a vector $v\in\C^n$ is given by
\begin{equation}\label{eq 1}
\widehat{v}=F_n v\in\C^n,
\end{equation} 
where $F_n$ denotes the DFT matrix defined as 
\[
F_n=\left[\left(e^{\frac{2i\pi}{n}}\right)^{(k-1)(j-1)}\right]_{k=1:n, j=1:n}\in
\C^{n\times n}.
\]
While the matrix $F_n$ is not unitary, the scaled matrix $F_n/\sqrt{n}$ is. The 
computation of the vector $\widehat{v}$ by straightforward evaluation of the matrix-vector
product in the right-hand side of \eqref{eq 1} requires $O(n^2)$ arithmetic floating point
operations (flops). It is well known that this flop count is reduced to $O(n\log(n))$ when
using the fast Fourier transform (FFT). This can be done in MATLAB with the command 
$\widehat{v}={\tt fft}(v)$.

Let $\widehat{\mathcal{A}}\in\K^{\ell\times p}_n$ denote the tensor obtained by applying
the DFT along the third dimension of the tensor $\mathcal{A}\in\R^{\ell\times p\times n}$.
Thus, we evaluate the DFT of each tube of $\mathcal{A}$; this can be done in MATLAB with 
the command 
\[
\widehat{\mathcal{A}}={\tt fft}\left(\mathcal{A},[\,],3\right).
\]
To transform $\widehat{\mathcal{A}}$ back to $\mathcal{A}$, we use the MATLAB command
\[
\mathcal{A}={\tt ifft}\left(\widehat{\mathcal{A}},[\,],3\right).
\]
For $\mathcal{A},\mathcal{B}\in\K^{\ell\times p}_n$, we have
\[
\left\Vert\mathcal{A}\right\Vert_F =
\dfrac{1}{\sqrt{n}}\Vert\widehat{\mathcal{A}}\Vert_F,\quad
\langle\mathcal{A},\mathcal{B}\rangle=
\dfrac{1}{n}\langle\widehat{\mathcal{A}},\widehat{\mathcal{B}}\rangle.
\]

The following definitions are required below. For $\mathcal{A}\in\K^{\ell\times p}_n$, we
define the associated block diagonal matrix 
\[
{\tt bdiag}\left(\mathcal{A}\right)=\begin{pmatrix}
\mathcal{A}^{(1)}&  &  &  & \\
& 	\mathcal{A}^{(2)} &  & & \\
&  & \mathcal{A}^{(3)} &  & \\
&  &  &  \ddots  & \\
&  & & &  \mathcal{A}^{(n)}
\end{pmatrix}\in\C^{\ell n\times pn}.
\]
Also introduce the block circulant matrix 
\[
{\tt bcirc}\left(\mathcal{A}\right)=\begin{pmatrix}
\mathcal{A}^{(1)} & \mathcal{A}^{(n)}  & \mathcal{A}^{(n-1)} & \dots & \mathcal{A}^{(2)}\\ 
\mathcal{A}^{(2)} & \mathcal{A}^{(1)}  & \mathcal{A}^{(n)} & \dots & \mathcal{A}^{(3)}\\ 
\vdots & \ddots & \ddots & \ddots & \vdots\\
\mathcal{A}^{(n-1)} & \mathcal{A}^{(n-2)} & \dots & \mathcal{A}^{(1)} & \mathcal{A}^{(n)}\\
\mathcal{A}^{(n)} & \mathcal{A}^{(n-1)} & \dots & \mathcal{A}^{(2)} & \mathcal{A}^{(1)}
\end{pmatrix}\in\C^{\ell n\times pn}.
\]
The operators ${\tt unfold}$ and ${\tt fold}$ are defined as
\[
{\tt unfold}\left(\mathcal{A}\right)=\begin{pmatrix}
\mathcal{A}^{(1)} \\
\mathcal{A}^{(2)}\\
\vdots\\
\mathcal{A}^{(n)}
\end{pmatrix}\in\C^{\ell n\times p} \text{~~and~~} 
{\tt fold}({\tt unfold}\left(\mathcal{A}\right))=\mathcal{A}.
\]
Thus, ${\tt unfold}$ transforms the tensor $\mathcal{A}\in\K^{p\times\ell}_n$ into a 
matrix and ${\tt fold}$ is the inverse operator.

A block circulant matrix can be block diagonalized by using the DFT. Kilmer et al. 
\cite{kilmer2013third} use this property to establish that for
$\mathcal{A}\in\K^{\ell\times p}_n$, we have
\[
\left(F_n \otimes I_\ell\right){\tt bcirc}\left(\mathcal{A}\right)
\left(F_n^H \otimes I_p\right)={\tt bdiag}(\widehat{\mathcal{A}}),
\]
where $\otimes$ stands for the Kronecker product. The following definition is due to 
Kilmer et al. \cite{kilmer2013third}.

\begin{defi}
Let $\mathcal{A}\in\K^{\ell\times q}_n$ and $\mathcal{B}\in\K^{q\times p}_n$. Then the 
t-product of $\mathcal{A}$ and $\mathcal{B}$ is defined as
\begin{equation}\label{tprod}
\mathcal{A}*\mathcal{B}=
{\tt fold}\left({\tt bcirc}\left(\mathcal{A}\right)
{\tt unfold}\left(\mathcal{B}\right)\right)\in\K^{\ell\times p}_n.
\end{equation}
\end{defi}

The evaluation of the t-product by using the above definition is quite costly unless the
tensors $\mathcal{A}$ and $\mathcal{B}$ are very sparse. Kilmer et al. 
\cite{kilmer2013third} therefore propose to evaluate the t-product \eqref{tprod} with the 
aid of the FFT. The following result makes this possible.

\begin{proposition}(\cite{kilmer2013third})
Let $\mathcal{A}\in\K^{\ell\times q}_n$ and $\mathcal{B}\in\K^{q\times p}_n$. Then the 
tensor $\mathcal{C}=\mathcal{A}*\mathcal{B}$ can be determined from
\[
{\tt bdiag}(\widehat{\mathcal{C}})={\tt bdiag}(\widehat{\mathcal{A}})\,
{\tt bdiag}(\widehat{\mathcal{B}}).
\]
\end{proposition}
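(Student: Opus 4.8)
The plan is to transport the identity to the Fourier domain, where the block-diagonalization identity recalled above converts the t-product into ordinary matrix multiplication carried out block by block. The natural first move is to read the definition \eqref{tprod} in unfolded form, ${\tt unfold}(\mathcal{A}*\mathcal{B})={\tt bcirc}(\mathcal{A})\,{\tt unfold}(\mathcal{B})$, and to upgrade this relation between \emph{first block columns} to a full matrix identity, namely
\[
{\tt bcirc}(\mathcal{A})\,{\tt bcirc}(\mathcal{B})={\tt bcirc}(\mathcal{C}),\qquad \mathcal{C}=\mathcal{A}*\mathcal{B}.
\]
Once this is available, conjugation by the DFT matrix finishes the argument immediately.

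To establish the displayed identity I would proceed in two steps. First, I would show that the product of two block circulant matrices is again block circulant. This follows from the recalled diagonalization identity: conjugation by $F_n\otimes I_m$ carries every block circulant matrix to a block diagonal one and is an invertible map on all matrices, so, since it sends the subspace of block circulants onto the equidimensional subspace of block diagonals, its inverse carries block diagonals back to block circulants; as a product of block diagonal matrices is block diagonal, the product ${\tt bcirc}(\mathcal{A})\,{\tt bcirc}(\mathcal{B})$ is block circulant. Second, a block circulant matrix is determined by its first block column, and the first block column of ${\tt bcirc}(\mathcal{A})\,{\tt bcirc}(\mathcal{B})$ equals ${\tt bcirc}(\mathcal{A})\,{\tt unfold}(\mathcal{B})={\tt unfold}(\mathcal{C})$ by \eqref{tprod}; hence the product coincides with ${\tt bcirc}(\mathcal{C})$.

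To conclude, I would write the diagonalization identity as the conjugation ${\tt bdiag}(\widehat{\mathcal{A}})=(F_n\otimes I_\ell)\,{\tt bcirc}(\mathcal{A})\,(F_n\otimes I_q)^{-1}$, and likewise for $\mathcal{B}$ and for $\mathcal{C}$. Multiplying the identities for $\widehat{\mathcal{A}}$ and $\widehat{\mathcal{B}}$, the interior factors $(F_n\otimes I_q)^{-1}$ and $(F_n\otimes I_q)$ tied to the shared dimension $q$ cancel, leaving $(F_n\otimes I_\ell)\,{\tt bcirc}(\mathcal{A})\,{\tt bcirc}(\mathcal{B})\,(F_n\otimes I_p)^{-1}$, which by the key identity equals $(F_n\otimes I_\ell)\,{\tt bcirc}(\mathcal{C})\,(F_n\otimes I_p)^{-1}={\tt bdiag}(\widehat{\mathcal{C}})$. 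This is precisely the claim. I expect the main obstacle to be the closure identity ${\tt bcirc}(\mathcal{A})\,{\tt bcirc}(\mathcal{B})={\tt bcirc}(\mathcal{C})$; the subsequent cancellation of the interior DFT factors is routine, and the rest is bookkeeping of the Kronecker block sizes $\ell$, $q$, and $p$.
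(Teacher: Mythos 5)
Your argument is correct. Note, however, that the paper itself offers no proof of this proposition --- it is quoted directly from Kilmer et al.\ \cite{kilmer2013third} --- so there is nothing in the text to compare against; your route (establishing the closure identity ${\tt bcirc}(\mathcal{A})\,{\tt bcirc}(\mathcal{B})={\tt bcirc}(\mathcal{A}*\mathcal{B})$ via the dimension count on block-circulant versus block-diagonal subspaces and the first-block-column observation, then conjugating by the DFT) is essentially the standard argument given in that reference. One small point in your favor: you correctly write the similarity with $(F_n\otimes I)^{-1}$ rather than $F_n^H\otimes I$, which is what makes the conjugation multiplicative; the version recalled in Section 2 with $F_n^H$ is off by the usual factor of $n$ unless the normalized DFT is intended.
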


We briefly consider the special case of evaluating the t-product of real tensors 
$\mathcal{A}\in\R^{\ell\times q\times n}$ and $\mathcal{B}\in\R^{q\times p\times n}$. It 
is well known that for $v\in\R^n$, the vector $\widehat{v}$ satisfies
\begin{equation}\label{fft 1}
\widehat{v}_1\in\R\mbox{~~and~~}{\tt conj}\left(\widehat{v}_j\right)=\widehat{v}_{n-i+1}
\mbox{~~for~~} i=1,\ldots, \left[\dfrac{n+1}{2}\right],
\end{equation}
where ${\tt conj}$ denotes the complex conjugation operator and $\left[(n+1)/2\right]$ 
stands for the integer part of $(n+1)/2$. A vector that satisfies \eqref{fft 1} is said to
be conjugate-even. Moreover, if the $n$-vector $\tilde{v}$ is real and conjugate-even, 
then $v=F_n^H \tilde{v}$ is a real conjugate-even vector, where the superscript $^H$ 
denotes transposition and complex conjugation; see, e.g., \cite[Chapter 15]{He} for many
properties of the DFT, as well as \cite{Rojo}.

Let the operator ${\tt vec}$ denote the vectorization of a tube. We say that the tube
$\bm{a}\in\C^n$ is real conjugate-even, if the vector $a={\tt vec}(\bm{a})$ is real
conjugate-even. By exploiting the symmetry of the DFT, it follows that for real 
third-order tensors $\mathcal{A}\in\R^{\ell\times p\times n}$, we have 
\[
\widehat{\mathcal{A}}^{(1)}\in\R^{\ell\times p}\mbox{~~and~~}
\widehat{\mathcal{A}}^{(i)}=\widehat{\mathcal{A}}^{(n-i+2)},\; i=2,3,\ldots, 
\left[\dfrac{n-1}{2}\right].
\]

Algorithm \ref{alg 1} exploits the symmetry properties of the DFT when evaluating the 
t-product of real tensors. 

\begin{algorithm}[H]
\caption{t-product of real third-order tensors}\label{alg 1}
\textbf{Input:} $\mathcal{A}\in\R^{\ell\times q\times n}$, 
$\mathcal{B}\in\R^{q\times p\times n}$.\\
\textbf{Output:} $\mathcal{C}=\mathcal{A}*\mathcal{B}\in\R^{\ell\times p\times n}$.
\begin{algorithmic}[1]
\STATE Compute $\widehat{\mathcal{A}}={\tt fft}(\mathcal{A},[\,], 3)$ and 
$\widehat{\mathcal{B}}={\tt fft}(\mathcal{B},[\,], 3)$
\FOR {$i=1,2\; \text{to}\;  \left[\dfrac{n+1}{2}\right]$}
\STATE $\widehat{\mathcal{C}}^{(i)}=\widehat{\mathcal{A}}^{(i)}
\widehat{\mathcal{B}}^{(i)}$
\ENDFOR
\FOR {$i=\left[\dfrac{n+1}{2}\right]+1\; \text{to}\;  n$}
\STATE $\widehat{\mathcal{C}}^{(i)}={\tt conj}(\widehat{\mathcal{C}}^{(n-i+2)})$
\ENDFOR
\STATE Compute $\mathcal{C}={\tt ifft}\left(\widehat{\mathcal{C}},[\,], 3\right)$
\end{algorithmic}
\end{algorithm}

We recall the main properties of the t-product that will be used in the sequel. Proofs can
be found in \cite{kilmer2013third}.

\begin{proposition}
\begin{enumerate}[label=(\roman*)]
\item 
The identity tensor under the t-product, $\mathcal{I}_\ell\in\K^{\ell\times\ell}_n$, has
the first frontal slice equal to the identity matrix and the remaining frontal slices are
zero matrices. 
\item
The canonical lateral slice under the t-product, $\vec{\mathcal{E}}_j\in\K^\ell_n$,
where $1\leq j\leq\ell$, has the $(j,1,1)$th entry equal to one, and all other entries
zero. 
\item
The unit tube, $\bm{e}\in\K_n$, has the entry $(1,1,1)$ equal to one, and all other 
entries zero.
\item
The conjugate transpose, $\mathcal{A}^H\in\K^{p\times\ell}_n$, of the tensor 
$\mathcal{A}\in\K^{\ell\times p}_n$ is obtained by first transposing and conjugating each 
one of the frontal slices of $\mathcal{A}$, and then reversing the order of the conjugated
transposed frontal slices 2 through $n$. The tensor conjugate transpose has similar 
properties as the matrix transpose. For instance, if $\mathcal{A}$ and $\mathcal{B}$ are 
tensors such that $\mathcal{A}*\mathcal{B}$ and $\mathcal{B}^H*\mathcal{A}^H$ are defined, 
then $(\mathcal{A}*\mathcal{B})^H = \mathcal{B}^H*\mathcal{A}^H$.
\item
A tensor $\mathcal{A}\in\K^{p\times p}_n$ is said to be normal if 
$\mathcal{A}^H*\mathcal{A}=\mathcal{A}*\mathcal{A}^H$.
\item
A third order tensor is said to be f-diagonal if its frontal slices in the Fourier 
domain are diagonal matrices. 
\item
A third order tensor is said to be f-upper-triangular if its frontal slices in the Fourier 
domain are upper triangular matrices. 
\item
The tensor $\mathcal{A}\in\K^{\ell\times\ell}_n$ is said to be f-Hermitian if 
$\mathcal{A}^H=\mathcal{A}$. In order for $\mathcal{A}$ to be f-Hermitian each frontal
slice of $\widehat{\mathcal{A}}$ has to be a Hermitian matrix.
\item
A tensor $\mathcal{Q}\in\K^{\ell\times\ell}_n$ is said to be f-orthogonal (or f-unitary) 
if $\mathcal{Q}^H*\mathcal{Q}=\mathcal{Q}*\mathcal{Q}^H=\mathcal{I}_\ell$. 
\item
A tensor $\mathcal{A}\in\K^{\ell\times\ell}_n$ is invertible if there is a tensor 
$\mathcal{A}^{-1}\in\K^{\ell\times\ell}_n$ such that
$\mathcal{A}^{-1}*\mathcal{A}=\mathcal{A}*\mathcal{A}^{-1}= \mathcal{I}_\ell$. This 
definition implies that a third order tensor is invertible under the t-product if and only
if each frontal slice in the Fourier domain is invertible. 
\item
Let $p$ be a positive integer and $\mathcal{A}\in\K^{\ell\times\ell}_n$. Then  
$\mathcal{A}^p=\mathcal{A}*\ldots*\mathcal{A}$. We define $\mathcal{A}^0=\mathcal{I}_\ell$.
\item
Let $\mathcal{A}\in\K^{\ell\times p}_n$. Then $\mathcal{A}\not\equiv\textbf{0}$ means that 
$\widehat{\mathcal{A}}^{(i)}\neq 0$ for every $i=1,2,\ldots,n$.
\end{enumerate}
\end{proposition}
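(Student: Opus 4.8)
The plan is to transfer every assertion to the Fourier domain, where the preceding Proposition turns the t-product into block-diagonal---hence frontal-slice-wise---matrix multiplication: writing $\mathcal{C}=\mathcal{A}*\mathcal{B}$, one has ${\tt bdiag}(\widehat{\mathcal{C}})={\tt bdiag}(\widehat{\mathcal{A}})\,{\tt bdiag}(\widehat{\mathcal{B}})$, equivalently $\widehat{\mathcal{C}}^{(i)}=\widehat{\mathcal{A}}^{(i)}\widehat{\mathcal{B}}^{(i)}$ for each $i$. Since the DFT along the third mode is an invertible linear map, two tensors agree if and only if all of their frontal slices in the Fourier domain agree. This reduces the only items that assert more than a definition---namely (iv), (viii), and (x)---to standard matrix facts applied one frontal slice at a time. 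The remaining items (i)--(iii), (v)--(vii), (ix), and (xi)--(xii) are definitions or immediate consequences of the definition of the t-product and need no separate argument.

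The key lemma, which I would establish first, is that conjugate transposition commutes with the DFT frontal-slice-wise: $\widehat{(\mathcal{A}^H)}^{(i)}=(\widehat{\mathcal{A}}^{(i)})^H$ for every $i$. I expect this to be the main obstacle. The definition of $\mathcal{A}^H$ in (iv) conjugate-transposes each frontal slice and then reverses the order of slices $2$ through $n$, and the crux is to verify that this reversal is precisely the operation that the DFT converts into complex conjugation of the frequency kernel. Concretely, writing $\omega=e^{2i\pi/n}$ and reindexing the reversed slices $k\mapsto n-k+2$, the factor $\omega^{(i-1)(k-1)}$ is sent to $\omega^{-(i-1)(m-1)}$ because $\omega^{(i-1)n}=1$; combined with the per-slice conjugate transpose this reproduces exactly $(\widehat{\mathcal{A}}^{(i)})^H$. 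With the lemma in hand, the identity $(\mathcal{A}*\mathcal{B})^H=\mathcal{B}^H*\mathcal{A}^H$ follows by computing, slice-wise, $\widehat{((\mathcal{A}*\mathcal{B})^H)}^{(i)}=(\widehat{\mathcal{A}}^{(i)}\widehat{\mathcal{B}}^{(i)})^H=(\widehat{\mathcal{B}}^{(i)})^H(\widehat{\mathcal{A}}^{(i)})^H$ and matching it with $\widehat{(\mathcal{B}^H*\mathcal{A}^H)}^{(i)}$.

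Item (viii) is then immediate: because the DFT is injective, $\mathcal{A}^H=\mathcal{A}$ holds if and only if $(\widehat{\mathcal{A}}^{(i)})^H=\widehat{\mathcal{A}}^{(i)}$ for every $i$, i.e., each frontal slice of $\widehat{\mathcal{A}}$ is Hermitian. For item (x), I would first note that the identity tensor $\mathcal{I}_\ell$ has $\widehat{\mathcal{I}_\ell}^{(i)}=I_\ell$ for every $i$, since each of its nonzero tubes is the unit tube $\bm{e}$ whose DFT is the all-ones vector. Consequently $\mathcal{A}^{-1}*\mathcal{A}=\mathcal{A}*\mathcal{A}^{-1}=\mathcal{I}_\ell$ is equivalent, frontal-slice-wise in the Fourier domain, to $\widehat{(\mathcal{A}^{-1})}^{(i)}\widehat{\mathcal{A}}^{(i)}=\widehat{\mathcal{A}}^{(i)}\widehat{(\mathcal{A}^{-1})}^{(i)}=I_\ell$. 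Such an inverse exists for each $i$ exactly when $\widehat{\mathcal{A}}^{(i)}$ is invertible; conversely, defining $\widehat{(\mathcal{A}^{-1})}^{(i)}=(\widehat{\mathcal{A}}^{(i)})^{-1}$ and transforming back produces the required tensor inverse. This yields the stated equivalence.
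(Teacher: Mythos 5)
Your proposal is correct. The paper itself offers no proof of this proposition --- it is presented as a list of definitions and recalled facts, with a pointer to Kilmer et al.\ \cite{kilmer2013third} for the proofs --- so there is no in-paper argument to compare against; but your route is exactly the standard one used throughout the paper and in the cited reference: pass to the Fourier domain, where the t-product becomes slice-wise matrix multiplication. Your identification of the one nontrivial ingredient, namely $\widehat{(\mathcal{A}^H)}^{(i)}=(\widehat{\mathcal{A}}^{(i)})^H$ via the reindexing $k\mapsto n-k+2$ and $\omega^{(i-1)n}=1$, is the right key lemma and your verification of it is sound, as are the deductions of (iv), (viii), and (x) from it (including the observation that $\widehat{\mathcal{I}_\ell}^{(i)}=I_\ell$ for every $i$).
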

\smallskip

In the following we will require the t-product of two tubes. This operation has been 
defined in \cite{kilmer2013third}. The definition suggests how the quotient of two tubes 
can be defined.
\smallskip

\begin{definition}
Let $\bm{a},\bm{b}\in\K_n$. Then 
\[
\bm{a}*\bm{b}={\tt ifft}(\widehat{\bm{a}}\circ\widehat{\bm{b}},[],3),
\]
where $\circ$ denotes the entry-wise product of the vectors and, as usual,
$\widehat{\bm{a}}$ and $\widehat{\bm{b}}$ stand for the Fourier transform of the tubes 
$\bm{a}$ and $\bm{b}$, respectively. Note that the tube product commutes. 

Let, in addition, all entries of $\widehat{\bm{b}}$ be nonvanishing. Then we 
can define 
\[
\frac{\bm{a}}{\bm{b}}={\tt ifft}(\widehat{\bm{a}}\div\widehat{\bm{b}},[],3),
\]
where $\div$ denotes entry-wise division. 
\end{definition}
\smallskip

We also will need to evaluate the product of a tensor and a tube, as well as the
quotient of a tensor and a tube.
\smallskip

\begin{definition}
Let $\mathcal{A}\in\K^{\ell\times p}_n$ and let $\bm{b}\in\K_n$. With the usual notation, 
we define the product $\mathcal{C}=\mathcal{A}*\bm{b}\in\K^{\ell\times p}_n$ as 
\[
\mathcal{C}={\tt ifft}(\widehat{\mathcal{C}},[],3), \mbox{~~where~~}
\widehat{\mathcal{C}}^{(i)}=\widehat{\mathcal{A}}^{(i)}\widehat{\bm{b}}^{(i)},\quad 
i=1,2,\ldots,n.
\]
We note that 
\begin{equation}\label{commute}
\mathcal{A}*\bm{b}=\bm{b}*\mathcal{A}.
\end{equation}
If, in addition, all entries of $\widehat{\bm{b}}$ are nonvanishing, then we define the 
quotient $\mathcal{C}=\mathcal{A}/\bm{b}\in\K^{\ell\times p}_n$ by 
$\mathcal{C}={\tt ifft}(\widehat{\mathcal{C}},[],3)$, where
\[
\widehat{\mathcal{C}}^{(i)}=\widehat{\mathcal{A}}^{(i)}/\widehat{\bm{b}}^{(i)},\quad 
i=1,2,\dots,n.
\]
\end{definition}
\smallskip

We recall that the lateral slices $\vec{\mathcal{A}}_j\in\K^\ell_n$, $j=1,2,\ldots,p$, of 
the tensor  $\mathcal{A}\in\K^{\ell\times p}_n$ are third-order tensors. Therefore, it 
follows from \eqref{commute} that for any tube $\bm{b}\in\K_n$,
\begin{equation}\label{commute2}
\vec{\mathcal{A}}_j*\bm{b}=\bm{b}*\vec{\mathcal{A}}_j.
\end{equation}
We will use this property below.
\smallskip
	
It is convenient to introduce the following space.
\smallskip

\begin{definition}
Let $\mathbb{G}=\{\vec{\mathcal{A}}_1,\vec{\mathcal{A}}_2,\ldots,\vec{\mathcal{A}}_p\} 
\subset\K^\ell_n$ be the set of lateral slices of the tensor 
$\mathcal{A}\in\K^{\ell\times p}_n$. Then the set of the t-linear combinations of
these slices is a lateral slice space given by
\begin{eqnarray*}
\tspan\left\{\mathbb{G}\right\}&=&\tspan\left\{\vec{\mathcal{A}}_1,\vec{\mathcal{A}}_2, 
\ldots,\vec{\mathcal{A}}_p\right\} \nonumber \\
	&=& \left\{\vec{\mathcal{C}}\in\K^\ell_n:~
	\vec{\mathcal{C}}=\sum_{i=1}^{p}\vec{\mathcal{A}}_i*\bm{b}_i,\mbox{~~where~~} 
	\bm{b}_i\in\K^n,~i=1,2,\ldots,p\right\}.
\end{eqnarray*}
\end{definition}

\section{Tensor factorizations}\label{sec 3}
Many matrix factorizations can be extended to third-order tensors under the t-product. We 
will review two available factorizations and introduce a new one, that will be used in the
sequel.

\begin{theorem} (t-SVD) (\cite{kilmer2013third})
Let $\mathcal{A}\in\K^{\ell\times p}_n$. Then, using the t-product, $\mathcal{A}$ 
can be factored into three third-order tensors,
\begin{equation}\label{eq 18}
	\mathcal{A}=\mathcal{U}*\mathcal{S}*\mathcal{V}^H,
\end{equation}
where $\mathcal{U}\in\K^{\ell\times\ell}_n$ and $\mathcal{V}\in\K^{p\times p}_n$ are 
f-orthogonal tensors and $\mathcal{S}\in\K^{\ell\times p}_n$ is an f-diagonal tensor. We 
refer to the factorization \eqref{eq 18} as the \emph{singular tube decomposition}.
\end{theorem}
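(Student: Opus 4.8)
The plan is to reduce the tensor factorization to a family of ordinary matrix singular value decompositions performed in the Fourier domain. The essential tool is the block-diagonalization identity
\[
(F_n\otimes I_\ell)\,\texttt{bcirc}(\mathcal{A})\,(F_n^H\otimes I_p)=\texttt{bdiag}(\widehat{\mathcal{A}}),
\]
together with the preceding Proposition, which shows that the t-product corresponds to ordinary matrix multiplication of the block-diagonal matrices $\texttt{bdiag}(\widehat{\,\cdot\,})$ in the Fourier domain. Since $\texttt{bdiag}(\widehat{\mathcal{A}})$ is block diagonal with the frontal slices $\widehat{\mathcal{A}}^{(1)},\ldots,\widehat{\mathcal{A}}^{(n)}\in\C^{\ell\times p}$ on its diagonal, the tensor identity \eqref{eq 18} holds precisely when the corresponding matrix identity holds slice by slice.

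First I would compute, for each $i=1,2,\ldots,n$, the matrix singular value decomposition
\[
\widehat{\mathcal{A}}^{(i)}=\widehat{U}^{(i)}\,\widehat{S}^{(i)}\,(\widehat{V}^{(i)})^H,
\]
where $\widehat{U}^{(i)}\in\C^{\ell\times\ell}$ and $\widehat{V}^{(i)}\in\C^{p\times p}$ are unitary and $\widehat{S}^{(i)}\in\C^{\ell\times p}$ is diagonal with nonnegative diagonal entries. I would then define tensors $\widehat{\mathcal{U}}$, $\widehat{\mathcal{S}}$, and $\widehat{\mathcal{V}}$ by declaring these matrices to be their frontal slices, and transform back along the third dimension by the inverse DFT to obtain $\mathcal{U}$, $\mathcal{S}$, and $\mathcal{V}$. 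By construction $\mathcal{S}$ is f-diagonal, since its frontal slices in the Fourier domain are the diagonal matrices $\widehat{S}^{(i)}$.

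It then remains to verify the three required conclusions. For the factorization, I would appeal to the Proposition: in the Fourier domain the candidate product $\mathcal{U}*\mathcal{S}*\mathcal{V}^H$ has block-diagonal matrix
\[
\texttt{bdiag}(\widehat{\mathcal{U}})\,\texttt{bdiag}(\widehat{\mathcal{S}})\,\texttt{bdiag}(\widehat{\mathcal{V}})^H,
\]
whose $i$th diagonal block equals $\widehat{U}^{(i)}\widehat{S}^{(i)}(\widehat{V}^{(i)})^H=\widehat{\mathcal{A}}^{(i)}$, so that the inverse transform yields \eqref{eq 18}. For f-orthogonality of $\mathcal{U}$ and $\mathcal{V}$, I would use that each frontal slice of $\widehat{\mathcal{U}}$ and $\widehat{\mathcal{V}}$ is unitary, together with the fact that the identity tensor $\mathcal{I}_\ell$ transforms to a tensor whose every frontal slice in the Fourier domain equals $I_\ell$ (the DFT of the tube $(1,0,\ldots,0)$ being $(1,1,\ldots,1)$); combining these with the Proposition gives $\mathcal{U}^H*\mathcal{U}=\mathcal{U}*\mathcal{U}^H=\mathcal{I}_\ell$, and likewise for $\mathcal{V}$.

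The main obstacle, and really the only point demanding care, is the interaction between the t-conjugate transpose and the Fourier domain: I must confirm that $\texttt{bdiag}(\widehat{\mathcal{V}^H})=\texttt{bdiag}(\widehat{\mathcal{V}})^H$, i.e., that forming $\mathcal{V}^H$ according to property (iv) of the Proposition (conjugate-transpose each frontal slice, then reverse slices $2$ through $n$) corresponds exactly to conjugate-transposing each frontal slice $\widehat{\mathcal{V}}^{(i)}$ in the Fourier domain. This is a consequence of the conjugate symmetry of the DFT; once it is in hand, the substitution of $(\widehat{V}^{(i)})^H$ for the frontal slices of $\widehat{\mathcal{V}^H}$ in the computation above is justified and the proof is complete.
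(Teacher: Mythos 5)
Your proof is correct: the paper states this theorem with only a citation to Kilmer et al.\ and gives no proof of its own, but your slice-wise SVD in the Fourier domain followed by an inverse FFT is precisely the argument in that reference, and it is the same template the paper itself uses to prove its t-LU and t-Hessenberg factorization theorems. The one delicate point you flag --- that $\widehat{\mathcal{V}^H}^{(i)}=\bigl(\widehat{\mathcal{V}}^{(i)}\bigr)^H$ --- does hold, since ${\tt bcirc}(\mathcal{V}^H)={\tt bcirc}(\mathcal{V})^H$ and conjugation by $F_n\otimes I$ commutes with taking the conjugate transpose, so nothing essential is missing.
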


The tubes $\bm{s}_i=\mathcal{S}_{i,i,:}$ for $i=1,2,\ldots,\min\{\ell,p\}$ of the tensor 
$\mathcal{S}$ are referred to as singular tubes, whereas 
$\vec{\mathcal{U}}_j=\mathcal{U}_{:,j,:}$ for $j=1,2,\ldots,\ell$ and 
$\vec{\mathcal{V}}_k=\mathcal{V}_{:,k,:}$ for $k=1,2,\ldots, p$ are called left and right 
singular (lateral) slices, respectively. The norm of the singular tubes $\bm{s}_i$, given 
by $\sigma_i=\left\Vert\bm{s}_i\right\Vert_F$ for $i=1,2,\ldots,\min\{\ell,p\}$, are 
referred to as the singular values of the tensor $\mathcal{A}$; they are enumerated in 
decreasing order. The singular triplets 
$\{\bm{s}_i, \vec{\mathcal{U}}_i, \vec{\mathcal{V}}_i\}_{i=1}^{\min\{\ell,p\}}$ satisfy
\[
\mathcal{A}*\vec{\mathcal{V}}_i=\vec{\mathcal{U}}_i*\bm{s}_i,~~ 
\mathcal{A}^H*\vec{\mathcal{U}}_i=\vec{\mathcal{V}}_i*\bm{s}_i,~~
1=1,2,\ldots,\min\{\ell,p\}.
\]
We note that equation \eqref{eq 18} can be written as 
\[
\mathcal{A}=\sum_{i=1}^{\min\{\ell,p\}}\vec{\mathcal{U}}_i*\bm{s}_i*\vec{\mathcal{V}}_i^H=
\sum_{i=1}^{\min\{\ell,p\}}\bm{s}_i*\vec{\mathcal{U}}_i*\vec{\mathcal{V}}_i^H,
\]
where the last equality follows from \eqref{commute2}.

Kilmer et al. \cite{kilmer2013third} also describe the t-QR factorization of a tensor
$\mathcal{A}\in\K^{\ell\times p}_n$,
\[
\mathcal{A}=\mathcal{Q}*\mathcal{R},
\]
where $\mathcal{Q}\in\K^{\ell\times\ell}_n$ is an f-orthogonal tensor and 
$\mathcal{R}\in\K^{\ell\times p}_n$ is an f-upper-triangular tensor.

\begin{theorem}
Let the third-order tensor $\mathcal{A}\in\K^{p\times p}_n$ be invertible. Then LU 
factorization with partial pivoting can be applied in the Fourier domain, resulting in
the t-LU decomposition
\begin{equation}\label{t-LU}
\mathcal{P}*\mathcal{A}=\mathcal{L}*\mathcal{U},
\end{equation}
where $\mathcal{P}\in\K^{p\times p}_n$ is the Fourier transform of permutation matrices, 
$\mathcal{L}\in\K^{p\times p}_n$ is an f-unit-lower triangular tensor, and 
$\mathcal{U}\in\K^{p\times p}_n$ is an f-upper-triangular tensor.
\end{theorem}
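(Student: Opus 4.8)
The plan is to reduce the assertion to the classical matrix LU factorization with partial pivoting by passing to the Fourier domain, where the t-product decouples into independent frontal-slice matrix products. First I would apply the DFT along the third dimension to form the frontal slices $\widehat{\mathcal{A}}^{(i)}\in\C^{p\times p}$, $i=1,2,\ldots,n$. By property (x) of the preceding proposition, invertibility of $\mathcal{A}$ under the t-product is equivalent to invertibility of each matrix $\widehat{\mathcal{A}}^{(i)}$, which is precisely the hypothesis needed to guarantee an LU factorization of every slice.

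Next I would invoke the standard result that every invertible matrix admits an LU factorization with partial pivoting: for each $i$ there exist a permutation matrix $P_i$, a unit lower triangular matrix $L_i$, and an upper triangular matrix $U_i$ with
\[
P_i\,\widehat{\mathcal{A}}^{(i)}=L_i\,U_i,\qquad i=1,2,\ldots,n.
\]
I would then define tensors $\mathcal{P},\mathcal{L},\mathcal{U}\in\K^{p\times p}_n$ slice by slice in the Fourier domain by setting $\widehat{\mathcal{P}}^{(i)}=P_i$, $\widehat{\mathcal{L}}^{(i)}=L_i$, and $\widehat{\mathcal{U}}^{(i)}=U_i$, and recover the tensors themselves through the inverse DFT. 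By the definition of f-upper-triangular tensors and the analogous notion for f-unit-lower-triangular tensors, $\mathcal{U}$ and $\mathcal{L}$ then have exactly the required structure, while $\mathcal{P}$ is by construction the Fourier transform of permutation matrices.

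Finally I would translate the collection of slice-wise identities into a single tensor equation. Since the block-diagonal operator multiplies slice by slice, the identities above are equivalent to
\[
{\tt bdiag}(\widehat{\mathcal{P}})\,{\tt bdiag}(\widehat{\mathcal{A}})=
{\tt bdiag}(\widehat{\mathcal{L}})\,{\tt bdiag}(\widehat{\mathcal{U}}).
\]
Applying the Proposition that characterizes the t-product through ${\tt bdiag}(\widehat{\mathcal{C}})={\tt bdiag}(\widehat{\mathcal{A}})\,{\tt bdiag}(\widehat{\mathcal{B}})$ to both sides, this reads ${\tt bdiag}(\widehat{\mathcal{P}*\mathcal{A}})={\tt bdiag}(\widehat{\mathcal{L}*\mathcal{U}})$, whence $\mathcal{P}*\mathcal{A}=\mathcal{L}*\mathcal{U}$, which establishes \eqref{t-LU}.

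The step to watch is not any single computation but the bookkeeping: the permutation matrices $P_i$ are in general different for different frontal slices, so $\mathcal{P}$ cannot be expected to correspond to a single fixed permutation tensor, which is precisely why the statement refers to ``the Fourier transform of permutation matrices'' rather than to one t-permutation. Everything else is routine, because passing to the Fourier domain turns the t-product into independent matrix multiplications and turns each structural tensor property into the corresponding matrix property slice by slice.
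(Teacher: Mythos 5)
Your proposal is correct and follows essentially the same route as the paper: pass to the Fourier domain, apply matrix LU with partial pivoting to each frontal slice (justified by the equivalence of tensor invertibility with slice-wise invertibility), and assemble $\mathcal{P}$, $\mathcal{L}$, $\mathcal{U}$ by the inverse DFT. Your final paragraph making the slice-wise-to-tensor translation explicit via ${\tt bdiag}$ is a small but welcome addition the paper leaves implicit.
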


\begin{proof}
Let $\widehat{\mathcal{A}}={\tt fft}(\mathcal{A},[\,], 3)$. Then, for $i=1,\ldots, n$, we 
apply LU factorization with partial pivoting to the frontal slices
$\widehat{\mathcal{A}}^{(i)}$ of $\widehat{\mathcal{A}}$. This gives the decompositions
\[
\widehat{P}^{(i)}\widehat{A}^{(i)}=\widehat{L}^{(i)}\widehat{U}^{(i)},\quad 
i=1,2,\ldots,n,
\]
where $\widehat{P}^{(i)}$ is a permutation matrix, $\widehat{L}^{(i)}$ is a unit lower 
triangular matrix, and $\widehat{U}^{(i)}$ is an upper triangular matrix. These 
decompositions exist because the tensor $\mathcal{A}$ is nonsingular. The matrices
$\widehat{P}^{(i)}$, $\widehat{L}^{(i)}$, and $\widehat{U}^{(i)}$ are frontal slices of
the tensors $\widehat{P}$, $\widehat{L}$, and $\widehat{U}$. The tensors in the 
decomposition \eqref{t-LU} are determined by 
\[
\mathcal{P}={\tt ifft}\left(\widehat{\mathcal{P}},[\,],3\right),\quad 
\mathcal{L}={\tt ifft}\left(\widehat{\mathcal{L}},[\,],3\right),\quad
\mathcal{U}={\tt ifft}\left(\widehat{\mathcal{U}},[\,],3\right).
\]
\end{proof}

The following factorization generalizes the Hessenberg reduction for matrices. It will be
used below in the computation of tensor eigentubes and eigenslices. 

\begin{theorem}
The tensor $\mathcal{A}\in\K^{p\times p}_n$ is unitarily similar to an f-upper-Hessenberg 
tensor, i.e.,
\[
\mathcal{H}=\mathcal{W}^H *\mathcal{A}*\mathcal{W},
\]
where $\mathcal{H}\in\mathbb{K}^{p\times p}_n$ is an f-upper-Hessenberg tensor (all 
frontal slices in the Fourier domain are upper Hessenberg matrices) and 
$\mathcal{W}\in \mathbb{K}^{p\times p}_n$ is an f-unitary tensor.
\end{theorem}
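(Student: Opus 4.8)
The plan is to mirror the argument already used for the t-LU decomposition: pass to the Fourier domain, where the t-product decouples into independent operations on the frontal slices, invoke the classical matrix result slice by slice, and transport the result back. Concretely, I would first compute $\widehat{\mathcal{A}}={\tt fft}(\mathcal{A},[\,],3)$, so that each frontal slice $\widehat{\mathcal{A}}^{(i)}\in\C^{p\times p}$, $i=1,\ldots,n$, is an ordinary square matrix.

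Next I would invoke the standard fact that every square complex matrix is unitarily similar to an upper Hessenberg matrix: for each $i$ there is a unitary matrix $\widehat{W}^{(i)}\in\C^{p\times p}$ (a product of Householder reflectors) and an upper Hessenberg matrix $\widehat{H}^{(i)}$ with
\[
\widehat{H}^{(i)}=(\widehat{W}^{(i)})^H\,\widehat{\mathcal{A}}^{(i)}\,\widehat{W}^{(i)},\qquad i=1,2,\ldots,n.
\]
Collecting the $\widehat{W}^{(i)}$ as the frontal slices of a tensor $\widehat{\mathcal{W}}$ and the $\widehat{H}^{(i)}$ as the frontal slices of $\widehat{\mathcal{H}}$, I would then define $\mathcal{W}={\tt ifft}(\widehat{\mathcal{W}},[\,],3)$ and $\mathcal{H}={\tt ifft}(\widehat{\mathcal{H}},[\,],3)$.

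It remains to verify the three asserted properties. Since each frontal slice $\widehat{H}^{(i)}$ is upper Hessenberg, $\mathcal{H}$ is f-upper-Hessenberg by definition. The key algebraic fact I would use is that the conjugate transpose acts slice-wise in the Fourier domain, i.e.\ $\widehat{(\mathcal{W}^H)}^{(i)}=(\widehat{W}^{(i)})^H$ — this is precisely the mechanism underlying the f-Hermitian characterization recalled in the preceding Proposition. Because each $\widehat{W}^{(i)}$ is unitary, the products $\mathcal{W}^H*\mathcal{W}$ and $\mathcal{W}*\mathcal{W}^H$ have every Fourier-domain frontal slice equal to $I_p$, whence $\mathcal{W}^H*\mathcal{W}=\mathcal{W}*\mathcal{W}^H=\mathcal{I}_p$ and $\mathcal{W}$ is f-unitary. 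Finally, the Proposition asserting ${\tt bdiag}(\widehat{\mathcal{C}})={\tt bdiag}(\widehat{\mathcal{A}})\,{\tt bdiag}(\widehat{\mathcal{B}})$ for $\mathcal{C}=\mathcal{A}*\mathcal{B}$ shows that the t-product $\mathcal{W}^H*\mathcal{A}*\mathcal{W}$ is computed frontal-slice-by-frontal-slice in the Fourier domain, so the slice-wise identity above recovers exactly $\mathcal{H}=\mathcal{W}^H*\mathcal{A}*\mathcal{W}$.

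The construction has no genuine obstacle, since the matrix Hessenberg reduction always exists; the only point requiring care is the bookkeeping that the conjugate-transpose operation — which in the spatial domain transposes and conjugates each slice and then reverses the order of frontal slices $2$ through $n$ — reduces to a plain slice-wise conjugate transpose in the Fourier domain. Once that is granted, the similarity and unitarity relations follow immediately from the slice-wise matrix identities, and no compatibility condition across different indices $i$ is needed because the t-product fully decouples in the Fourier domain.
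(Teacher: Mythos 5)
Your proposal is correct and follows essentially the same route as the paper's proof: pass to the Fourier domain, apply the classical matrix Hessenberg reduction to each frontal slice, collect the unitary factors and Hessenberg matrices into tensors, and transform back with the inverse FFT. Your additional remarks verifying that the conjugate transpose and the t-product both act slice-wise in the Fourier domain make explicit a point the paper leaves implicit, but the argument is the same.
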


\begin{proof}
Let $\widehat{\mathcal{A}}={\tt fft}(\mathcal{A},[\,], 3)$. Then, for $i=1,\ldots, n$, the
frontal slice $\widehat{\mathcal{A}}^{(i)}$ of $\widehat{\mathcal{A}}$ is unitarily 
similar to a Hessenberg matrix $H_i$. We have 
\[
H_i=W_i^H \widehat{\mathcal{A}}^{(i)} W_i,\quad i=1,2,\ldots,n,
\]
where the $H_i\in\C^{p\times p}$ are upper Hessenberg matrices and the 
$W_i\in\C^{p\times p}$ are unitary matrices.

Define the tensors $\mathcal{H}'$ and $\mathcal{W}'$ as follows:
\[
\mathcal{H}'^{(i)}=H_i {\rm ~~and~~} \mathcal{W}'^{(i)}=W_i,\quad i=1,2,\ldots,n.
\]
Then 
\[
\mathcal{H}'^{(i)}=\left(\mathcal{W}'^{(i)}\right)^H \widehat{\mathcal{A}}^{(i)} 
\mathcal{W}'^{(i)},\quad i=1,2,\ldots,n.
\]
Consequently, defining the tensors 
$\mathcal{H}={\tt ifft}\left(\mathcal{H}',[\,],3\right)$ and 
$\mathcal{W}={\tt ifft}\left(\mathcal{W}',[\,],3\right)$, we  conclude that
\[
\mathcal{H}=\mathcal{W}^H *\mathcal{A}*\mathcal{W},
\]
where $\mathcal{H}$ is an f-Hessenberg tensor since each frontal slice is an upper
Hessenberg matrix in the Fourier domain; obviously the tensor $\mathcal{W}$ is unitary, 
since its frontal slices in the Fourier domain are unitary.
\end{proof}

Algorithm \ref{alg 2} provides details of the transformation of a tensor to 
f-upper-Hessenberg form.
\begin{algorithm}[H]
\caption{t-Hessenberg reduction ({\tt t-hess})}\label{alg 2}
\textbf{Input:} $\mathcal{A}\in \mathbb{K}^{p\times p}_n$.\\
\textbf{Output:} Hessenberg tensor $\mathcal{H}\in\K^{p\times p}_n$ and unitary tensor 
$\mathcal{W}\in\K^{p\times p}_n$.\hfill\vskip.1in
\begin{algorithmic}[1]
\STATE $\widehat{\mathcal{A}}={\tt fft}\left(\mathcal{A},[\,],3\right)$.
\FOR {$i=1$ to $n$}
\STATE $[\widehat{\mathcal{W}}^{(i)}, 
\widehat{\mathcal{H}}^{(i)}]={\tt hess}(\mathcal{A}^{(i)})$
\ENDFOR
\STATE $\mathcal{W}={\tt ifft}\left(\widehat{\mathcal{W}},[\,],3\right)$, 
$\mathcal{H}={\tt ifft}\left(\widehat{\mathcal{H}},[\,],3\right)$
\end{algorithmic}
\end{algorithm}

The MATLAB function ${\tt hess}$ used in line $3$ of Algorithm \ref{alg 2} applies a unitary
similarity transformation $\widehat{\mathcal{W}}^{(i)}$ to the matrix 
$\widehat{\mathcal{A}}^{(i)}$ to determine an upper Hessenberg matrix 
$\widehat{\mathcal{H}}^{(i)}$. In the case of real tensors we can use the symmetry 
property of the DFT.

Following Kilmer et al. \cite{kilmer2013third}, we introduce a bilinear form associated 
with the t-product. For lateral slices 
$\vec{\mathcal{X}},\vec{\mathcal{Y}}\in\K^\ell_n$, we define
\[
\langle \vec{\mathcal{X}},\vec{\mathcal{Y}}\rangle=
\vec{\mathcal{X}}^H *\vec{\mathcal{Y}}\in\K_n.
\]
The value of this bilinear form is a tube and the quantity 
$\langle\vec{\mathcal{X}},\vec{\mathcal{X}}\rangle$ is not necessarily real and positive. 
The  next lemma gives some properties of this bilinear form.

\begin{lemma}(\cite{kilmer2013third})
Let $\vec{\mathcal{X}},\vec{\mathcal{Y}},\vec{\mathcal{Z}}\in\K^p_n$ and $\bm{a}\in\K_n$. 
Then the following properties hold:
\begin{enumerate}
\item $\langle  \vec{\mathcal{X}}, \vec{\mathcal{Y}}+ \vec{\mathcal{Z}}\rangle= 
\langle \vec{\mathcal{X}},\vec{\mathcal{Y}} \rangle+ 
\langle \vec{\mathcal{X}},\vec{\mathcal{Z}} \rangle$,
\item $\langle \vec{\mathcal{X}},\vec{\mathcal{Y}}*\bm{a}\rangle=
\bm{a}*\langle\vec{\mathcal{X}},\vec{\mathcal{Y}}\rangle$,
\item $\langle \bm{a}*\vec{\mathcal{X}},\vec{\mathcal{Y}} \rangle=
\bm{a}^H *\langle\vec{\mathcal{X}},\vec{\mathcal{Y}}\rangle$,
\item $\langle \vec{\mathcal{X}}.\vec{\mathcal{Y}}  \rangle=
\langle \vec{\mathcal{Y}}, \vec{\mathcal{X}} \rangle^H$.
\end{enumerate}
\end{lemma}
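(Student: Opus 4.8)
The plan is to derive all four identities directly from the elementary algebraic properties of the t-product, without unfolding the definition \eqref{tprod}. The only ingredients I need are the distributivity of the t-product over addition, its associativity, the reversal rule $(\mathcal{A}*\mathcal{B})^H=\mathcal{B}^H*\mathcal{A}^H$ together with the involution $(\mathcal{A}^H)^H=\mathcal{A}$ recalled above, the commutativity of the tube product, and the relation \eqref{commute2}. Equivalently, each identity could be verified frontal-slice-by-frontal-slice in the Fourier domain, where the t-product reduces to ordinary matrix multiplication and the conjugate transpose acts as $\widehat{\mathcal{Z}^H}^{(i)}=(\widehat{\mathcal{Z}}^{(i)})^H$; this gives a uniform but more computational route.

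For the first identity I would invoke distributivity. Since $\langle\vec{\mathcal{X}},\vec{\mathcal{Y}}\rangle=\vec{\mathcal{X}}^H*\vec{\mathcal{Y}}$ and the t-product is linear in its second argument (immediate from the linearity of ${\tt bcirc}$ and ${\tt unfold}$ in \eqref{tprod}), one obtains $\vec{\mathcal{X}}^H*(\vec{\mathcal{Y}}+\vec{\mathcal{Z}})=\vec{\mathcal{X}}^H*\vec{\mathcal{Y}}+\vec{\mathcal{X}}^H*\vec{\mathcal{Z}}$, which is the claim. For the second identity I would use associativity to write
\[
\langle\vec{\mathcal{X}},\vec{\mathcal{Y}}*\bm{a}\rangle
=\vec{\mathcal{X}}^H*(\vec{\mathcal{Y}}*\bm{a})
=(\vec{\mathcal{X}}^H*\vec{\mathcal{Y}})*\bm{a}
=\langle\vec{\mathcal{X}},\vec{\mathcal{Y}}\rangle*\bm{a},
\]
and then observe that $\langle\vec{\mathcal{X}},\vec{\mathcal{Y}}\rangle\in\K_n$ is a tube, so the commutativity of the tube product rewrites the last expression as $\bm{a}*\langle\vec{\mathcal{X}},\vec{\mathcal{Y}}\rangle$.

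The third identity combines \eqref{commute2} with the reversal rule. First I would use \eqref{commute2} to write $\bm{a}*\vec{\mathcal{X}}=\vec{\mathcal{X}}*\bm{a}$, so that the reversal rule gives $(\bm{a}*\vec{\mathcal{X}})^H=(\vec{\mathcal{X}}*\bm{a})^H=\bm{a}^H*\vec{\mathcal{X}}^H$; associativity then yields
\[
\langle\bm{a}*\vec{\mathcal{X}},\vec{\mathcal{Y}}\rangle
=(\bm{a}*\vec{\mathcal{X}})^H*\vec{\mathcal{Y}}
=\bm{a}^H*(\vec{\mathcal{X}}^H*\vec{\mathcal{Y}})
=\bm{a}^H*\langle\vec{\mathcal{X}},\vec{\mathcal{Y}}\rangle.
\]
The fourth identity is the shortest: applying the reversal rule and the involution gives $\langle\vec{\mathcal{Y}},\vec{\mathcal{X}}\rangle^H=(\vec{\mathcal{Y}}^H*\vec{\mathcal{X}})^H=\vec{\mathcal{X}}^H*(\vec{\mathcal{Y}}^H)^H=\vec{\mathcal{X}}^H*\vec{\mathcal{Y}}=\langle\vec{\mathcal{X}},\vec{\mathcal{Y}}\rangle$, which is the claim.

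I do not expect a genuine obstacle here; the only points requiring care are bookkeeping ones. The first is the freedom to commute the tube $\langle\vec{\mathcal{X}},\vec{\mathcal{Y}}\rangle$ in the second identity, which rests precisely on the commutativity of the \emph{tube} product and not on any commutativity of the t-product in general (the latter fails). The second is tracking the conjugation of $\bm{a}$ correctly in the third identity, where the $^H$ of the scalar-like tube must be read through the reversal rule after first commuting via \eqref{commute2}; if one instead argues in the Fourier domain, the same care reappears as the fact that conjugate transposition acts frontal-slice-wise after the DFT.
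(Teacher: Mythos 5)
Your proof is correct. The paper itself does not prove this lemma---it is stated with a citation to Kilmer et al.\ \cite{kilmer2013third} and no argument is given---so there is no in-paper proof to compare against; your derivation fills that gap. Each of the four steps is sound: distributivity of the t-product in the second argument for (1), associativity plus commutativity of the tube product for (2), the combination of \eqref{commute2} with the reversal rule for (3), and the reversal rule with the involution for (4). You are also right to flag the one genuinely delicate point in (3): since $\bm{a}*\vec{\mathcal{X}}$ is the tube--tensor product rather than a dimensionally standard t-product, rewriting it as $\vec{\mathcal{X}}*\bm{a}$ via \eqref{commute2} \emph{before} applying the reversal rule $(\vec{\mathcal{X}}*\bm{a})^H=\bm{a}^H*\vec{\mathcal{X}}^H$ is the clean way to justify where the conjugation of $\bm{a}$ lands; the alternative Fourier-domain verification you mention would work equally well and reduces everything to scalar and matrix identities slice by slice.
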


A sequence $\left\{\vec{\mathcal{X}}_1,\vec{\mathcal{X}}_2,\ldots,\vec{\mathcal{X}}_p
\right\}\subset\K^\ell_n$ of $p\geq 2$ lateral slices is said to be f-orthogonal if  
\[
\langle\vec{\mathcal{X}}_i,\vec{\mathcal{X}}_j\rangle=
\vec{\mathcal{X}}_i^H *\vec{\mathcal{X}}_j=\left\lbrace\begin{array}{ll}
\alpha_i \textbf{e} & \mbox{if $i=j$},\;\; i,j=1,2 , \ldots, p,\\
\textbf{0} & \mbox{else},
\end{array}
\right.
\]
where $\bm{e}$ is the unit tube and $\alpha_i\in\C$. If $\alpha_i=1$ for all 
$1\leq i\leq p$, then the sequence is said to be f-orthonormal.

Let the lateral slice $\vec{\mathcal{Y}}\in\K^\ell_n$ be nonvanishing. We then can 
normalize $\vec{\mathcal{Y}}$ in order to obtain a slice $\vec{\mathcal{X}}\in\K^\ell_n$ 
with norm $\Vert\vec{\mathcal{X}}\Vert=1$, i.e., 
$\vec{\mathcal{Y}}=\vec{\mathcal{X}}*\bm{a}$ with
$\Vert\vec{\mathcal{Y}}\Vert=\Vert\bm{a}\Vert$, where 
\[
\Vert\vec{\mathcal{Y}}\Vert=
\dfrac{\Vert\langle\vec{\mathcal{Y}},\vec{\mathcal{Y}}\rangle\Vert_F}
{\Vert\vec{\mathcal{Y}}\Vert_F}.
\]
\section{Tensor eigenpairs based on the t-product}\label{sec 4}
This section introduces tensor eigenpairs for third-order tensors defined with the aid of 
the t-product. 

\begin{definition}
Let $\mathcal{A}\in\K^{p\times p}_n$ and let 
$\gamma_{1,i},\gamma_{2,i},\ldots,\gamma_{p,i}$ be the eigenvalues of 
$\widehat{\mathcal{A}}^{(i)}$ for $i=1,2,\ldots,n$, ordered such as 
\[
\vert\gamma_{j,i}\vert \geq \vert\gamma_{j+1,i}\vert,\quad j=1,2,\ldots,p-1.
\]
The tubes $\bm{\lambda}_1,\bm{\lambda}_2,\ldots,\bm{\lambda}_p$ defined by 
\[
\widehat{\bm{\lambda}}_j^{(i)}=\gamma_{j,i},\; i=1,2,\ldots,n,\; j=1,2,\ldots,p,
\]
are \emph{eigentubes} of $\mathcal{A}$. For a given eigentube $\bm{\lambda}$ of 
$\mathcal{A}$, a nonvanishing lateral slice $\vec{\mathcal{U}}\in \K^p_n$ such that
\begin{equation}\label{eslice}
\mathcal{A}*\vec{\mathcal{U}}=\bm{\lambda}*\vec{\mathcal{U}}
\end{equation}
is said to be an \emph{eigenslice} or \emph{right eigenslice} of $\mathcal{A}$ associated 
with $\bm{\lambda}$. Moreover, the set $\{\bm{\lambda},\vec{\mathcal{U}}\}$ is referred to
as an eigenpair of $\mathcal{A}$. 
\end{definition}
\smallskip

Note that equations \eqref{eslice} and \eqref{commute2}, imply that
\[
\mathcal{A}*\vec{\mathcal{U}}=\vec{\mathcal{U}}*\bm{\lambda}.
\]
We remark that when $n=1$, the third-order tensor reduces to a square matrix, and the 
eigentubes and eigenslices become eigenvalues and eigenvectors, respectively.

A tensor $\mathcal{A}\in\K_n^{p\times p}$ has $p$ eigentubes. Let  
$\bm{\lambda}_1,\bm{\lambda}_2,\ldots,\bm{\lambda}_p$ be the eigentubes of $\mathcal{A}$.
Then 
\[
\left\Vert \bm{\lambda}_1 \right\Vert_F \geq \left\Vert \bm{\lambda}_2 \right\Vert_F\geq
\ldots \geq \left\Vert \bm{\lambda}_p \right\Vert_F.
\]
The set of all eigentubes of $\mathcal{A}$ is referred to as the \emph{spectrum} of 
$\mathcal{A}$ and is denoted by $\bm{\Lambda}\left(\mathcal{A}\right)$. The 
\emph{spectral radius} of $\mathcal{A}$ is defined as 
\[
\rho\left(\mathcal{A}\right) = 
\max_{\bm{\lambda}\in \bm{\Lambda}\left(\mathcal{A}\right)}
\left\Vert \bm{\lambda}\right\Vert_F.
\]

The eigenvalues of a matrix are the zeros of the characteristic polynomial for the matrix.
We will derive an analogous result for third-order tensors. To this end, we require the
definition of the determinant of a square third-order tensor.
\smallskip

\begin{definition}
Let $\mathcal{A}\in\mathbb{K}^{p\times p}_n$. The determinant of $\mathcal{A}$ under the 
t-product is the tube 
\[
\bm{\gamma}=\tdet\left(\mathcal{A}\right)\in \mathbb{K}_n,
\]
which is defined by the inverse Fourier transformation of the tube $\widehat{\bm{\gamma}}$
with $\widehat{\bm{\gamma}}^{(i)}=\det\left(\widehat{\mathcal{A}}^{(i)}\right)$, 
$i=1,2,\ldots,n$, where $\widehat{\mathcal{A}}={\tt fft}(\mathcal{A},[\,],3)$.
\end{definition}
\smallskip

Thus, the determinant of a third-order tensor under the t-product is a function
\[
\begin{split}
	\tdet: \mathbb{K}^{p\times p}_n &\to \mathbb{K}_n \\
	\mathcal{A} &\mapsto \tdet \left(\mathcal{A}\right),
\end{split}
\]
such that $\widehat{\tdet\left(\mathcal{A}\right)}^{(i)}=
\det\left(\widehat{\mathcal{A}}^{(i)}\right)$ for $i=1,2,\ldots,n$.
This determinant has the following properties.

\begin{proposition}\label{prop 3}
Let $\mathcal{A},\mathcal{B}\in\mathbb{K}^{p\times p}_n$ and let 
$\bm{\alpha}\in\mathbb{K}_n$. Then
\begin{enumerate}[label=(\roman*)]
\item 
$\tdet\left(\mathcal{A}*\mathcal{B}\right)=\tdet\left(\mathcal{B}*\mathcal{A}\right)$,
\item 
$\tdet\left(\mathcal{A}^H\right)=\left(\tdet\left(\mathcal{A}\right)\right)^H$,
\item 
$\tdet\left(\bm{\alpha}*\mathcal{A}\right)=\bm{\alpha}^p*\tdet\left(\mathcal{A}\right)$.
\end{enumerate}
\end{proposition}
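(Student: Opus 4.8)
The plan is to reduce every identity to a slice-wise statement in the Fourier domain and then to invoke elementary facts about ordinary matrix determinants, since by definition $\widehat{\tdet(\mathcal{A})}^{(i)}=\det(\widehat{\mathcal{A}}^{(i)})$ for $i=1,2,\ldots,n$, and two tubes coincide precisely when all of their Fourier slices coincide. Throughout I would use three transformation rules for the Fourier slices: that the t-product multiplies slices as ordinary matrices, $\widehat{(\mathcal{A}*\mathcal{B})}^{(i)}=\widehat{\mathcal{A}}^{(i)}\widehat{\mathcal{B}}^{(i)}$ (the Proposition of Kilmer et al.); that the tube--tensor product acts as scalar multiplication, $\widehat{(\bm{\alpha}*\mathcal{A})}^{(i)}=\widehat{\bm{\alpha}}^{(i)}\widehat{\mathcal{A}}^{(i)}$ (by the definition of the tensor--tube product); and that the conjugate transpose acts slice-wise as the Hermitian transpose, $\widehat{(\mathcal{A}^H)}^{(i)}=(\widehat{\mathcal{A}}^{(i)})^H$.

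For (i), I would compute $\widehat{\tdet(\mathcal{A}*\mathcal{B})}^{(i)}=\det(\widehat{\mathcal{A}}^{(i)}\widehat{\mathcal{B}}^{(i)})=\det(\widehat{\mathcal{A}}^{(i)})\det(\widehat{\mathcal{B}}^{(i)})$ by multiplicativity of the matrix determinant. Since the right-hand side is a product of scalars, it is unchanged under interchanging $\mathcal{A}$ and $\mathcal{B}$, and re-collapsing the product gives $\det(\widehat{\mathcal{B}}^{(i)}\widehat{\mathcal{A}}^{(i)})=\widehat{\tdet(\mathcal{B}*\mathcal{A})}^{(i)}$. Equality of all Fourier slices then yields the tube identity after the inverse transform. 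For (iii), the same slice-wise reduction gives $\widehat{\tdet(\bm{\alpha}*\mathcal{A})}^{(i)}=\det(\widehat{\bm{\alpha}}^{(i)}\widehat{\mathcal{A}}^{(i)})=(\widehat{\bm{\alpha}}^{(i)})^{p}\det(\widehat{\mathcal{A}}^{(i)})$, using $\det(cM)=c^{p}\det(M)$ for a scalar $c$ and a $p\times p$ matrix $M$. I would then observe that the tube power satisfies $\widehat{(\bm{\alpha}^{p})}^{(i)}=(\widehat{\bm{\alpha}}^{(i)})^{p}$, because the tube product is entry-wise multiplication in the Fourier domain, so the right-hand side matches $\widehat{(\bm{\alpha}^{p}*\tdet(\mathcal{A}))}^{(i)}$ slice by slice.

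The delicate part is (ii), where the conjugate transpose must be tracked on two different objects. First, on the tensor side, $\widehat{\tdet(\mathcal{A}^H)}^{(i)}=\det((\widehat{\mathcal{A}}^{(i)})^H)=\overline{\det(\widehat{\mathcal{A}}^{(i)})}$. Second, on the tube side, I must verify that the tensor conjugate transpose of a $1\times 1\times n$ tube transforms as $\widehat{(\bm{\gamma}^H)}^{(i)}=\overline{\widehat{\bm{\gamma}}^{(i)}}$; this is the same slice-wise Hermitian rule, now specialized to scalar slices. Applying it to $\bm{\gamma}=\tdet(\mathcal{A})$ gives $\widehat{((\tdet(\mathcal{A}))^H)}^{(i)}=\overline{\det(\widehat{\mathcal{A}}^{(i)})}$, which matches the tensor-side computation, and the inverse transform finishes the proof.

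The one fact that I would need to justify with care, and which I regard as the main obstacle, is the slice-wise Hermitian rule $\widehat{(\mathcal{A}^H)}^{(i)}=(\widehat{\mathcal{A}}^{(i)})^H$ underlying both halves of (ii). This can be obtained either from the properties of the conjugate transpose already recorded above (f-Hermiticity of $\mathcal{A}$ amounts to Hermiticity of every frontal slice of $\widehat{\mathcal{A}}$), or directly: writing $\widehat{\mathcal{A}}^{(i)}=\sum_{k=1}^{n}\omega^{(i-1)(k-1)}\mathcal{A}^{(k)}$ with $\omega=e^{2\pi i/n}$, substituting the definition of $\mathcal{A}^H$ (conjugate-transpose each frontal slice and reverse the order of slices $2$ through $n$), and using $\omega^{n}=1$ together with $\overline{\omega}=\omega^{-1}$ to re-index the sum into $(\widehat{\mathcal{A}}^{(i)})^H=\sum_{k=1}^{n}\overline{\omega^{(i-1)(k-1)}}(\mathcal{A}^{(k)})^H$. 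Equivalently, one may take Hermitian transposes of ${\tt bcirc}(\mathcal{A}^H)={\tt bcirc}(\mathcal{A})^H$ in the block-diagonalization identity $(F_n\otimes I_p)\,{\tt bcirc}(\mathcal{A})\,(F_n^H\otimes I_p)={\tt bdiag}(\widehat{\mathcal{A}})$ and read off the frontal slices.
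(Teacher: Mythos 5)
Your proof is correct and follows essentially the same route as the paper: reduce each identity to the frontal slices in the Fourier domain via $\widehat{\tdet(\mathcal{A})}^{(i)}=\det(\widehat{\mathcal{A}}^{(i)})$ and apply the multiplicativity, conjugation, and scaling properties of the ordinary matrix determinant. The only difference is that you explicitly isolate and justify the slice-wise Hermitian rule $\widehat{(\mathcal{A}^H)}^{(i)}=(\widehat{\mathcal{A}}^{(i)})^H$ (including its specialization to tubes), which the paper uses without comment; this is a welcome extra care but not a different argument.
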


\begin{proof}
\begin{enumerate}[label=(\roman*)]
\item 
Let $\mathcal{C}=\mathcal{A}*\mathcal{B}$. Then 
$\widehat{\mathcal{C}}^{(i)}=\widehat{\mathcal{A}}^{(i)}\widehat{\mathcal{B}}^{(i)}$ for 
$i=1,2,\dots,n$. Consider $\bm{\gamma}=\tdet\left(\mathcal{A}*\mathcal{B}\right)$. Then 
\begin{eqnarray*}
\widehat{\bm{\gamma}}^{(i)}=\det\left(\widehat{\mathcal{C}}^{(i)}\right)&=& 
\det\left(\widehat{\mathcal{A}}^{(i)}\widehat{\mathcal{B}}^{(i)}\right)\\
	&=&\det\left(\widehat{\mathcal{B}}^{(i)}\widehat{\mathcal{A}}^{(i)}\right).
\end{eqnarray*}
We conclude that $\tdet\left(\mathcal{A}*\mathcal{B}\right)=
\tdet\left(\mathcal{B}*\mathcal{A}\right)$.
\item 
Let $\bm{\gamma}=\tdet\left(\mathcal{A}^H\right)$. Then
\begin{eqnarray*}
\widehat{\bm{\gamma}}^{(i)}=\det\left(\widehat{\mathcal{A}^H}^{(i)}\right)=
\det\left(\left(\widehat{\mathcal{A}}^{(i)}\right)^H\right)=
{\tt conj}\left(\det\left(\widehat{\mathcal{A}}^{(i)}\right)\right).
\end{eqnarray*}
This shows that $\tdet\left(\mathcal{A}^H\right)=
\left(\tdet\left(\mathcal{A}\right) \right)^H$.
\item 
Let $\mathcal{A}\in\mathbb{K}^{p\times p}_n$ and $\bm{\alpha}\in\mathbb{K}_n$. Then
\[
\det\left(\widehat{\bm{\alpha}}^{(i)}\widehat{\mathcal{A}}^{(i)}\right)=
\left(\widehat{\bm{\alpha}}^{(i)}\right)^p \det\left( \widehat{\mathcal{A}}^{(i)}\right).
\]
By the definition of the power of a third-order tensor, we obtain
$\tdet\left(\bm{\alpha}*\mathcal{A}\right)=\bm{\alpha}^p*\tdet\left(\mathcal{A}\right)$.
\end{enumerate}
\end{proof}

Define the polynomial $\bm{P}$ of degree $s$ on the commutative ring 
$\left(\mathbb{K}_n, +, *\right)$ as follows:
\[
\bm{P}\left(\bm{x}\right)=\sum_{i=0}^{s}\bm{a}_i*\bm{x}^i,
\]
where $\bm{a}_i\in\mathbb{K}_n$ for $i=0,1,\ldots,s$ and $\bm{a}_s\not\equiv 0$. We remark
that if $\bm{P}$ is a polynomial in $\mathbb{K}_n$ of degree $s$, then each polynomial 
$\bm{P}^{(i)}$ defined by
\[
\bm{P}^{(i)}(\bm{x})=\sum_{j=0}^{s}
\widehat{\bm{a}}_j^{(i)}\left(\widehat{\bm{x}}^{(i)}\right)^j,\quad i=1,2,\ldots,n, 
\]
is of degree $s$.

\begin{definition}\label{defroots}
Let $\bm{P}$ be a polynomial in $\K_n$ of degree $s$. We say that 
$\bm{\gamma}_1,\bm{\gamma}_2,\ldots,\bm{\gamma}_s$ are roots of $\bm{P}$, i.e., they are
solutions of $\bm{P}(\bm{x})=0$, if 
$\widehat{\bm{\gamma}}_1^{(i)},\ldots,\widehat{\bm{\gamma}}_s^{(i)}$ are roots of 
$\bm{P}^{(i)}$. We order the roots according to 
$\vert\widehat{\bm{\gamma}}_j^{(i)}\vert\geq\vert\widehat{\bm{\gamma}}_{j+1}^{(i)}\vert$ 
for $j=1,2,\ldots,s-1$.
\end{definition}
\smallskip

It follows from Definition \ref{defroots} that every polynomial in $\K_n$ of degree $s$ 
has $s$ roots in $\K_n$.
\smallskip

\begin{definition}
Let $\bm{P}$ be a polynomial of degree $s$ in $\mathbb{K}_n$. We say that a root 
$\bm{\gamma}$ of $\bm{P}$ is of f-multiplicity
\[
m=\min_{1\leq i\leq n}\, {\rm mult}\left(\widehat{\bm{\gamma}}^{(i)}\right),
\]
where for each $i=1,2,\ldots,n$, ${\rm mult}\left(\widehat{\bm{\gamma}}^{(i)}\right)$ 
denotes the multiplicity of the root $\widehat{\bm{\gamma}}^{(i)}$.
\end{definition}
\smallskip

\begin{definition}
Let $\mathcal{A}\in \K^{p\times p}_n$. Then the characteristic polynomial of $\mathcal{A}$
is given by 
\[
\bm{P}_\mathcal{A}\left(\bm{x}\right)=\tdet\left(\mathcal{A}-\mathcal{I}_p*\bm{x}\right),
\quad \bm{x}\in\K_n.
\]
\end{definition}
\smallskip

We remark that the degree of the characteristic polynomial of the tensor 
$\mathcal{A}\in\K^{p\times p}_n$ is $p$ since the degree of the characteristic 
polynomial of each frontal slice of $\widehat{\mathcal{A}}$ is $p$. Furthermore, the 
characteristic polynomial of $\mathcal{A}$ has $p$ roots.

\begin{proposition}
An eigentube of the tensor $\mathcal{A}\in\K^{p\times p}_n$ is a root of the 
characteristic polynomial $\bm{P}_\mathcal{A}$ of $\mathcal{A}$. 
\end{proposition}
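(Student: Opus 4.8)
The plan is to reduce the statement entirely to the Fourier domain, where the frontal slices decouple, and to show that the slicewise characteristic polynomial $\bm{P}_\mathcal{A}^{(i)}$ coincides with the ordinary characteristic polynomial of the matrix $\widehat{\mathcal{A}}^{(i)}$. Once this identification is made, an eigentube is a root essentially by construction, since its $i$th Fourier slice is by definition an eigenvalue of $\widehat{\mathcal{A}}^{(i)}$.

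First I would unwind the definition $\bm{P}_\mathcal{A}(\bm{x})=\tdet(\mathcal{A}-\mathcal{I}_p*\bm{x})$ under the FFT. Using the definition of $\tdet$, for each $i=1,2,\ldots,n$ we have $\widehat{\bm{P}_\mathcal{A}(\bm{x})}^{(i)}=\det\bigl(\widehat{\mathcal{A}-\mathcal{I}_p*\bm{x}}^{(i)}\bigr)$. The key subcomputation is to evaluate $\widehat{\mathcal{A}-\mathcal{I}_p*\bm{x}}^{(i)}$. Here I would use two facts: that the identity tensor $\mathcal{I}_p$ satisfies $\widehat{\mathcal{I}_p}^{(i)}=I_p$ for every $i$ (its only nonzero tubes are the constant tube $\bm{e}$ on the diagonal, whose FFT is the all-ones vector), and that the tensor--tube product gives $\widehat{\mathcal{I}_p*\bm{x}}^{(i)}=\widehat{\mathcal{I}_p}^{(i)}\widehat{\bm{x}}^{(i)}=\widehat{\bm{x}}^{(i)}I_p$. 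Combining these yields $\widehat{\mathcal{A}-\mathcal{I}_p*\bm{x}}^{(i)}=\widehat{\mathcal{A}}^{(i)}-\widehat{\bm{x}}^{(i)}I_p$, so that $\bm{P}_\mathcal{A}^{(i)}(\widehat{\bm{x}}^{(i)})=\det\bigl(\widehat{\mathcal{A}}^{(i)}-\widehat{\bm{x}}^{(i)}I_p\bigr)$ is exactly the characteristic polynomial of the frontal slice $\widehat{\mathcal{A}}^{(i)}$.

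With this identification in hand, I would finish as follows. Let $\bm{\lambda}$ be an eigentube of $\mathcal{A}$; by the definition of eigentubes, $\widehat{\bm{\lambda}}^{(i)}=\gamma_{j,i}$ is an eigenvalue of $\widehat{\mathcal{A}}^{(i)}$ for each $i$, hence $\det(\widehat{\mathcal{A}}^{(i)}-\widehat{\bm{\lambda}}^{(i)}I_p)=0$, i.e., $\widehat{\bm{\lambda}}^{(i)}$ is a root of $\bm{P}_\mathcal{A}^{(i)}$ for every $i$. By Definition~\ref{defroots}, this is precisely what it means for $\bm{\lambda}$ to be a root of the characteristic polynomial $\bm{P}_\mathcal{A}$, which proves the claim.

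The only real obstacle is bookkeeping rather than mathematics: one must verify carefully that the FFT acts as a ring homomorphism on the coefficients, so that $\widehat{\bm{P}_\mathcal{A}(\bm{x})}^{(i)}=\bm{P}_\mathcal{A}^{(i)}(\widehat{\bm{x}}^{(i)})$ in the sense of the slicewise polynomials $\bm{P}^{(i)}$ introduced earlier, and that the determinant and the tensor--tube product commute appropriately with the transform. Once the correct slicewise interpretation of $\bm{P}_\mathcal{A}$ is pinned down, each step is a one-line verification.
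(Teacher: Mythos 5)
Your proof is correct and follows essentially the same route as the paper's: pass to the Fourier domain, observe that each slice $\widehat{\bm{\lambda}}^{(i)}$ is by definition an eigenvalue of $\widehat{\mathcal{A}}^{(i)}$ and hence a root of $\det(\widehat{\mathcal{A}}^{(i)}-\widehat{\bm{x}}^{(i)}I_p)$, and conclude via the definition of roots of a tubal polynomial. You simply spell out more explicitly the identification $\widehat{\mathcal{A}-\mathcal{I}_p*\bm{x}}^{(i)}=\widehat{\mathcal{A}}^{(i)}-\widehat{\bm{x}}^{(i)}I_p$, which the paper leaves implicit.
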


\begin{proof}
Let $\bm{\lambda}$ be an eigentube of $\mathcal{A}$. Then $\widehat{\bm{\lambda}}^{(i)}$ 
is an eigenvalue of $\widehat{\mathcal{A}}^{(i)}$ for $i=1,2,\ldots,n$. Moreover, 
$\widehat{\bm{\lambda}}^{(i)}$ is a root of the characteristic polynomial of 
$\widehat{\mathcal{A}}^{(i)}$ for $i=1,2,\ldots,n$. Therefore 
$\bm{P}_\mathcal{A}\left(\bm{\lambda}\right)=\tdet\left(\mathcal{A}-\mathcal{I}_p* 
\bm{\lambda}\right)=\bm{0}$.
\end{proof}
\smallskip

Note that the tensor $\mathcal{A}\in\K^{p\times p}_n$ has at most $p$ distinct eigentubes.

\begin{proposition}
Let $\bm{\lambda}$ be an eigentube of $\mathcal{A}\in\K^{p\times p}_n$. Then 
$\bm{\lambda}^H$ is an eigentube of $\mathcal{A}^H$.
\end{proposition}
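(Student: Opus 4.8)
The plan is to reduce the statement to a slice-wise fact in the Fourier domain, namely that the eigenvalues of a matrix $M^H$ are the complex conjugates of the eigenvalues of $M$. First I would invoke the characterization built into the definition of eigentubes: $\bm{\lambda}$ is an eigentube of $\mathcal{A}$ precisely when $\widehat{\bm{\lambda}}^{(i)}$ is an eigenvalue of $\widehat{\mathcal{A}}^{(i)}$ for every $i=1,2,\ldots,n$. Consequently, it suffices to prove that $\widehat{\bm{\lambda}^H}^{(i)}$ is an eigenvalue of $\widehat{\mathcal{A}^H}^{(i)}$ for every such $i$; the whole argument therefore takes place slice by slice after applying the FFT along the third dimension.

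Second, I would record the two Fourier-domain identities that translate the tensor and tube conjugate transpose into slice-wise conjugate transposes. For the tensor this is $\widehat{\mathcal{A}^H}^{(i)}=\left(\widehat{\mathcal{A}}^{(i)}\right)^H$, which is exactly the relation already exploited in the proof of Proposition \ref{prop 3}(ii). Applying the same identity to the $1\times 1\times n$ tensor $\bm{\lambda}$, whose frontal slices are scalars, gives $\widehat{\bm{\lambda}^H}^{(i)}={\tt conj}\left(\widehat{\bm{\lambda}}^{(i)}\right)$, since the conjugate transpose of a $1\times 1$ matrix is simply its complex conjugate.

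Third, I would use the elementary matrix fact that for $M\in\C^{p\times p}$ one has $\det\left(zI-M^H\right)=\overline{\det\left(\bar z I-M\right)}$, whence $\gamma$ is an eigenvalue of $M$ if and only if ${\tt conj}(\gamma)$ is an eigenvalue of $M^H$. Taking $M=\widehat{\mathcal{A}}^{(i)}$ and $\gamma=\widehat{\bm{\lambda}}^{(i)}$ shows that ${\tt conj}\left(\widehat{\bm{\lambda}}^{(i)}\right)$ is an eigenvalue of $\left(\widehat{\mathcal{A}}^{(i)}\right)^H$. Combining this with the identities of the previous step, $\widehat{\bm{\lambda}^H}^{(i)}$ is an eigenvalue of $\widehat{\mathcal{A}^H}^{(i)}$ for each $i$, which is the conclusion sought.

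The one point requiring care — and the main obstacle — is to confirm that $\bm{\lambda}^H$ is genuinely one of the $p$ eigentubes in the sense of the definition, rather than merely a tube whose slices happen to be eigenvalues; this matters because the definition fixes the eigentubes through the magnitude ordering $\vert\gamma_{j,i}\vert\geq\vert\gamma_{j+1,i}\vert$ within each slice. Here I would observe that conjugation preserves modulus, $\vert{\tt conj}(\gamma_{j,i})\vert=\vert\gamma_{j,i}\vert$, so if $\bm{\lambda}$ is the $j$th eigentube of $\mathcal{A}$ then its slices $\widehat{\bm{\lambda}^H}^{(i)}$ occupy the $j$th position in the magnitude ordering of the eigenvalues of $\widehat{\mathcal{A}^H}^{(i)}$; hence $\bm{\lambda}^H$ is indeed an eigentube of $\mathcal{A}^H$. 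The remaining verifications are routine.
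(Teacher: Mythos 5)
Your proof is correct, but it takes a genuinely different route from the paper's. The paper's argument is a one-liner: an eigentube of $\mathcal{A}$ is a root of the characteristic polynomial $\bm{P}_{\mathcal{A}}(\bm{x})=\tdet\left(\mathcal{A}-\mathcal{I}_p*\bm{x}\right)$, and the identity $\tdet\left(\mathcal{A}^H\right)=\left(\tdet\left(\mathcal{A}\right)\right)^H$ of Proposition \ref{prop 3} then yields $\bm{P}_{\mathcal{A}^H}\left(\bm{\lambda}^H\right)=\left(\bm{P}_{\mathcal{A}}\left(\bm{\lambda}\right)\right)^H=\bm{0}$. You bypass the characteristic polynomial entirely and argue slice-wise in the Fourier domain, using $\widehat{\mathcal{A}^H}^{(i)}=\left(\widehat{\mathcal{A}}^{(i)}\right)^H$ together with the elementary fact that the eigenvalues of $M^H$ are the conjugates of those of $M$. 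The two arguments are close cousins---both ultimately reduce to the frontal slices of $\widehat{\mathcal{A}}$---but yours is more elementary and, notably, more complete: the paper's proof only establishes that $\bm{\lambda}^H$ is a root of $\bm{P}_{\mathcal{A}^H}$ and silently invokes the converse that every such root is an eigentube in the sense of the definition; that converse hinges on the magnitude ordering $\vert\gamma_{j,i}\vert\geq\vert\gamma_{j+1,i}\vert$, which is exactly the point you verify explicitly by observing that conjugation preserves modulus. The only residual caveat afflicts the paper's definition rather than your proof: when distinct eigenvalues of a frontal slice share the same modulus the ordering is not unique, so the precise statement your argument delivers is that $\bm{\lambda}^H$ can be taken as the $j$th eigentube of $\mathcal{A}^H$ whenever $\bm{\lambda}$ is the $j$th eigentube of $\mathcal{A}$, which is all that is needed.
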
 

\begin{proof}
This result follows from the observation that an eigentube is a root of the 
characteristic polynomial of $\mathcal{A}$, and by using Proposition \ref{prop 3}.
\end{proof}

Let $\bm{\lambda}$ be an eigentube of $\mathcal{A}\in\K^{p\times p}_n$. The slice 
$\vec{\mathcal{V}}\in\K^{p}_n$ such that 
\[
\mathcal{A}^H * \vec{\mathcal{V}}=\bm{\lambda}^H*\vec{\mathcal{V}}
\]
is called a \emph{left eigenslice} of $\mathcal{A}$ associated with $\bm{\lambda}$. 

We turn to the linear independence of lateral slices. Let 
$\{\vec{\mathcal{V}}_i\}_{i=1}^s\subset \K^{p}_n$ be a set of lateral slices. This set is 
t-linearly independent if and only if 
\[
\sum_{i=1}^{s}\bm{a}_i *\vec{\mathcal{V}}_i=\bm{0}\Longrightarrow \bm{a}_i = \bm{0},\quad 
i=1,2,\ldots,s.
\]
It is clear that the slices $\vec{\mathcal{V}}_1,\ldots,\vec{\mathcal{V}}_s$ in $\K_n^p$ 
are t-linearly independent if and only if the vectors 
$ \vec{\widehat{\mathcal{V}}}_1^{(j)},\ldots,\vec{\widehat{\mathcal{V}}}_s^{(j)}$ are 
linearly independent for $j=1,2,\ldots,n$. This leads to the following result.

\begin{proposition}\label{prop 9}
A tensor $\mathcal{A}\in\K^{p\times p}_n$ is invertible under the t-product if it has $p$ 
t-linearly independent lateral slices.
\end{proposition}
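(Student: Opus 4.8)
The plan is to push the entire question into the Fourier domain and combine two facts already at our disposal. The first is the invertibility criterion recorded in item (x) of the Proposition on properties of the t-product: a tensor in $\K^{p\times p}_n$ is invertible under the t-product if and only if each frontal slice $\widehat{\mathcal{A}}^{(j)}$, $j=1,2,\ldots,n$, of $\widehat{\mathcal{A}}$ is a nonsingular matrix. The second is the characterization of t-linear independence stated immediately before the proposition: the lateral slices $\vec{\mathcal{A}}_1,\ldots,\vec{\mathcal{A}}_p$ are t-linearly independent if and only if, for every $j=1,2,\ldots,n$, the vectors $\vec{\widehat{\mathcal{A}}}_1^{(j)},\ldots,\vec{\widehat{\mathcal{A}}}_p^{(j)}$ are linearly independent in $\C^p$.

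First I would feed the hypothesis into the second fact: since $\mathcal{A}$ has $p$ t-linearly independent lateral slices, for each fixed $j$ the $p$ vectors $\vec{\widehat{\mathcal{A}}}_1^{(j)},\ldots,\vec{\widehat{\mathcal{A}}}_p^{(j)}$ form a linearly independent family in $\C^p$. The key bookkeeping step is then to recognize these vectors as the columns of the $j$-th Fourier-domain frontal slice $\widehat{\mathcal{A}}^{(j)}$. Indeed, the $k$-th lateral slice is $\vec{\mathcal{A}}_k=\mathcal{A}_{:,k,:}$, so its $j$-th frontal slice is the column $\mathcal{A}_{:,k,j}\in\C^p$, which is exactly the $k$-th column of the frontal slice $\mathcal{A}^{(j)}=\mathcal{A}_{:,:,j}$. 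Applying the fiberwise DFT along the third mode acts independently on each tube and hence preserves this column structure, so $\vec{\widehat{\mathcal{A}}}_k^{(j)}$ is precisely the $k$-th column of $\widehat{\mathcal{A}}^{(j)}$.

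With the columns identified, the conclusion is immediate: for each $j$ the $p\times p$ matrix $\widehat{\mathcal{A}}^{(j)}$ has $p$ linearly independent columns and is therefore nonsingular; as this holds for all $j=1,2,\ldots,n$, every frontal slice of $\widehat{\mathcal{A}}$ is invertible, and the invertibility criterion yields that $\mathcal{A}$ is invertible under the t-product. The statement is thus essentially a direct corollary of the two quoted results, and there is no genuine analytic difficulty. The only point requiring care — which I regard as the main, albeit minor, obstacle — is the index bookkeeping that matches the $j$-th frontal slices of the $p$ lateral slices with the columns of $\widehat{\mathcal{A}}^{(j)}$, together with the observation that this identification is unaffected by the DFT taken along the third mode.
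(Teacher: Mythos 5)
Your argument is correct and is exactly the one the paper intends: the proposition is stated as a direct consequence of the preceding characterization of t-linear independence in the Fourier domain together with the frontal-slice invertibility criterion, and your column-identification step just makes that implication explicit. No issues.
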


We are in a position to define f-diagonalization of a third-order tensor.

\begin{definition}
A third-order tensor $\mathcal{A}\in \K^{p\times p}_n$ is said to be f-diagonalizable 
under the t-product if and only if it is similar to an f-diagonal tensor, i.e., if and
only if 
\[
\mathcal{A}=\mathcal{X}* \mathcal{D}* \mathcal{X}^{-1}
\]
for some invertible tensor $\mathcal{X}\in \K^{p\times p}_n$ and an f-diagonal tensor $\mathcal{D}$ 
formed by the eigentubes.
\end{definition}

\begin{proposition}
A tensor $\mathcal{A}\in \K^{p\times p}_n$ is f-diagonalizable if and only if it has $p$ 
linearly independent eigenslices.
\end{proposition}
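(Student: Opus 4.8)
The plan is to pass everything to the Fourier domain along the third dimension and reduce the statement to the classical matrix fact that a matrix $M\in\C^{p\times p}$ is diagonalizable if and only if it has $p$ linearly independent eigenvectors, applied simultaneously to each frontal slice $\widehat{\mathcal{A}}^{(i)}$, $i=1,\ldots,n$. Three dictionary entries, all recorded earlier in the paper, drive the argument: (a) a tensor is f-diagonal exactly when each slice $\widehat{(\cdot)}^{(i)}$ is a diagonal matrix, and $\mathcal{X}$ is invertible exactly when each $\widehat{\mathcal{X}}^{(i)}$ is invertible; (b) lateral slices $\vec{\mathcal{U}}_1,\ldots,\vec{\mathcal{U}}_p$ are t-linearly independent if and only if for every $i$ the vectors $\vec{\widehat{\mathcal{U}}}_1^{(i)},\ldots,\vec{\widehat{\mathcal{U}}}_p^{(i)}$ are linearly independent; and (c) $\vec{\mathcal{U}}$ is an eigenslice of $\mathcal{A}$ for the eigentube $\bm{\lambda}$ if and only if, for each $i$, the column $\vec{\widehat{\mathcal{U}}}^{(i)}$ is an eigenvector of $\widehat{\mathcal{A}}^{(i)}$ for the eigenvalue $\widehat{\bm{\lambda}}^{(i)}$, which is immediate from the slice-wise form of \eqref{eslice}.

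For the forward direction I would assume $\mathcal{A}=\mathcal{X}*\mathcal{D}*\mathcal{X}^{-1}$ with $\mathcal{X}$ invertible and $\mathcal{D}$ f-diagonal, whose diagonal tubes are the eigentubes $\bm{\lambda}_1,\ldots,\bm{\lambda}_p$. Applying the DFT and the fact that the t-product corresponds to slice-wise matrix products in the Fourier domain yields $\widehat{\mathcal{A}}^{(i)}=\widehat{\mathcal{X}}^{(i)}\widehat{\mathcal{D}}^{(i)}(\widehat{\mathcal{X}}^{(i)})^{-1}$ for each $i$, with $\widehat{\mathcal{D}}^{(i)}$ diagonal and $\widehat{\mathcal{X}}^{(i)}$ invertible. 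Reading this identity column by column shows that the lateral slices $\vec{\mathcal{X}}_1,\ldots,\vec{\mathcal{X}}_p$ are eigenslices of $\mathcal{A}$ (the $j$th associated with $\bm{\lambda}_j$), while invertibility of $\mathcal{X}$ gives, through (b), their t-linear independence. Hence $\mathcal{A}$ has $p$ linearly independent eigenslices.

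For the converse, let $\vec{\mathcal{U}}_1,\ldots,\vec{\mathcal{U}}_p$ be t-linearly independent eigenslices with eigentubes $\bm{\lambda}_1,\ldots,\bm{\lambda}_p$. I would assemble $\mathcal{X}=\left[\vec{\mathcal{U}}_1,\ldots,\vec{\mathcal{U}}_p\right]$ and take $\mathcal{D}$ to be the f-diagonal tensor whose diagonal tubes are $\bm{\lambda}_1,\ldots,\bm{\lambda}_p$. By (b) together with Proposition \ref{prop 9}, $\mathcal{X}$ is invertible. The eigenslice relations $\mathcal{A}*\vec{\mathcal{U}}_j=\vec{\mathcal{U}}_j*\bm{\lambda}_j$ collect into the single identity $\mathcal{A}*\mathcal{X}=\mathcal{X}*\mathcal{D}$, most transparently verified in the Fourier domain, where it reads $\widehat{\mathcal{A}}^{(i)}\widehat{\mathcal{X}}^{(i)}=\widehat{\mathcal{X}}^{(i)}\widehat{\mathcal{D}}^{(i)}$ for each $i$. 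Multiplying on the right by $\mathcal{X}^{-1}$ gives $\mathcal{A}=\mathcal{X}*\mathcal{D}*\mathcal{X}^{-1}$, so $\mathcal{A}$ is f-diagonalizable.

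The routine part is pure bookkeeping; the point requiring care is the precise dictionary between the t-product notions and their Fourier-slice counterparts, in particular checking (c), that the eigenslice equation \eqref{eslice} is equivalent, frontal slice by frontal slice, to the ordinary eigenvalue equation $\widehat{\mathcal{A}}^{(i)}\vec{\widehat{\mathcal{U}}}^{(i)}=\widehat{\bm{\lambda}}^{(i)}\vec{\widehat{\mathcal{U}}}^{(i)}$, and confirming that f-diagonalizability of $\mathcal{A}$ is exactly simultaneous diagonalizability of all $n$ frontal slices in the Fourier domain. Once that correspondence is in hand, each direction is a transcription of the matrix theorem applied independently to the $n$ slices.
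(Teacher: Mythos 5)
Your argument is correct. Note that the paper states this proposition without any proof, so there is nothing to compare against; your Fourier-domain, slice-by-slice reduction to the classical matrix criterion is the natural argument and is exactly in the style the paper uses for its other factorization results (the t-LU and t-Hessenberg theorems are proved the same way). The only point worth flagging is definitional rather than logical: the paper's eigentubes are built by collecting, for a fixed index $j$, the $j$th-largest eigenvalue of each frontal slice $\widehat{\mathcal{A}}^{(i)}$, and its definition of f-diagonalizable already requires the diagonal of $\mathcal{D}$ to consist of eigentubes in that ordered sense; your converse direction implicitly assumes the given eigenslices are associated with eigentubes of this form (so that the assembled $\mathcal{D}$ qualifies), which is consistent with the paper's definition of eigenslice but deserves a sentence of acknowledgment.
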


\begin{proposition}
Let $\vec{\mathcal{U}}$ be an eigenslice of $\mathcal{A}^H*\mathcal{A}$ associated with 
the eigentube $\bm{\lambda}$. Then $\mathcal{A}*\vec{\mathcal{U}}$ is an eigentube of 
$\mathcal{A}*\mathcal{A}^H$ associated to $\bm{\lambda}$.
\end{proposition}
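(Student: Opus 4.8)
The plan is to start from the defining relation for $\vec{\mathcal{U}}$ as an eigenslice of $\mathcal{A}^H*\mathcal{A}$, namely
\[
\left(\mathcal{A}^H*\mathcal{A}\right)*\vec{\mathcal{U}}=\bm{\lambda}*\vec{\mathcal{U}},
\]
and to t-multiply both sides on the left by $\mathcal{A}$. Using associativity of the t-product, the left-hand side regroups as $\left(\mathcal{A}*\mathcal{A}^H\right)*\left(\mathcal{A}*\vec{\mathcal{U}}\right)$. For the right-hand side I would invoke the commutation property \eqref{commute2}: since $\bm{\lambda}$ is a tube it commutes with every tensor, so
\[
\mathcal{A}*\left(\bm{\lambda}*\vec{\mathcal{U}}\right)=\left(\mathcal{A}*\bm{\lambda}\right)*\vec{\mathcal{U}}=\left(\bm{\lambda}*\mathcal{A}\right)*\vec{\mathcal{U}}=\bm{\lambda}*\left(\mathcal{A}*\vec{\mathcal{U}}\right).
\]
Combining the two sides gives
\[
\left(\mathcal{A}*\mathcal{A}^H\right)*\left(\mathcal{A}*\vec{\mathcal{U}}\right)=\bm{\lambda}*\left(\mathcal{A}*\vec{\mathcal{U}}\right),
\]
which is exactly the eigenslice relation \eqref{eslice} for the tensor $\mathcal{A}*\mathcal{A}^H$ with the same eigentube $\bm{\lambda}$.

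The main obstacle is the word \emph{eigenslice} itself: by definition an eigenslice must be nonvanishing, so the remaining work is to verify that $\mathcal{A}*\vec{\mathcal{U}}\not\equiv\bm{0}$. I would settle this in the Fourier domain. For each $i$ the relation reads $\left(\widehat{\mathcal{A}}^{(i)}\right)^H\widehat{\mathcal{A}}^{(i)}\,\widehat{\vec{\mathcal{U}}}^{(i)}=\widehat{\bm{\lambda}}^{(i)}\,\widehat{\vec{\mathcal{U}}}^{(i)}$, so $\widehat{\bm{\lambda}}^{(i)}$ is an eigenvalue of the Hermitian positive semidefinite matrix $\left(\widehat{\mathcal{A}}^{(i)}\right)^H\widehat{\mathcal{A}}^{(i)}$ with eigenvector $\widehat{\vec{\mathcal{U}}}^{(i)}\neq 0$. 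The standard matrix fact is that $\widehat{\mathcal{A}}^{(i)}\widehat{\vec{\mathcal{U}}}^{(i)}=0$ would force $\widehat{\bm{\lambda}}^{(i)}\widehat{\vec{\mathcal{U}}}^{(i)}=0$, hence $\widehat{\bm{\lambda}}^{(i)}=0$. Consequently, on every frontal slice where $\widehat{\bm{\lambda}}^{(i)}\neq 0$ the transformed slice $\widehat{\mathcal{A}}^{(i)}\widehat{\vec{\mathcal{U}}}^{(i)}$ is nonzero.

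I expect this nonvanishing check, together with clarifying the standing hypothesis on $\bm{\lambda}$, to be the only delicate point, since the algebraic identity itself is immediate from associativity and tube commutativity. In particular, if $\bm{\lambda}$ is nonvanishing in the sense that all its Fourier entries $\widehat{\bm{\lambda}}^{(i)}$ are nonzero, then the argument above shows that every frontal slice of $\widehat{\mathcal{A}*\vec{\mathcal{U}}}$ is nonzero, so $\mathcal{A}*\vec{\mathcal{U}}\not\equiv\bm{0}$ and it is a genuine (right) eigenslice of $\mathcal{A}*\mathcal{A}^H$ associated with $\bm{\lambda}$. Where some entries of $\widehat{\bm{\lambda}}^{(i)}$ vanish the image may degenerate on those slices, exactly as in the matrix case $A^HA u=\lambda u$ with $\lambda=0$, and this is why the hypothesis should be stated explicitly.
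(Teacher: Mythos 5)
Your proof follows essentially the same route as the paper: left-multiply the relation $\mathcal{A}^H*\mathcal{A}*\vec{\mathcal{U}}=\vec{\mathcal{U}}*\bm{\lambda}$ by $\mathcal{A}$ and regroup by associativity and tube commutativity, which is all the paper's two-line proof does. Your additional verification that $\mathcal{A}*\vec{\mathcal{U}}$ is nonvanishing when all Fourier entries $\widehat{\bm{\lambda}}^{(i)}$ are nonzero is a genuine refinement of a point the paper silently omits (as is your observation that the statement should really say ``eigenslice,'' not ``eigentube,'' for $\mathcal{A}*\vec{\mathcal{U}}$), but it does not change the underlying argument.
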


\begin{proof}
We have $\mathcal{A}^H *\mathcal{A}*\vec{\mathcal{U}}=\vec{\mathcal{U}}*\bm{\lambda}$.
Therefore, $\mathcal{A}*\mathcal{A}^H *\mathcal{A}*\vec{\mathcal{U}}=
\mathcal{A}*\vec{\mathcal{U}}*\bm{\lambda}$.
\end{proof}

\begin{theorem}
Let the tensor $\mathcal{A}\in \mathbb{K}^{p\times p}_n$ be f-Hermitian. Then all
eigentubes $\bm{\lambda}$ of $\mathcal{A}$ are real.
\end{theorem}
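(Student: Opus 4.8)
The plan is to reduce the claim to the classical fact that a Hermitian matrix has only real eigenvalues, arguing slice-by-slice in the Fourier domain. First I would invoke the characterization of f-Hermitian tensors recorded earlier: since $\mathcal{A}^H=\mathcal{A}$, each frontal slice $\widehat{\mathcal{A}}^{(i)}$, $i=1,2,\ldots,n$, of $\widehat{\mathcal{A}}$ is a Hermitian matrix. This is the only structural input needed from the hypothesis.

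Next I would recall the definition of eigentubes. For any eigentube $\bm{\lambda}$ of $\mathcal{A}$, its Fourier coefficients satisfy $\widehat{\bm{\lambda}}^{(i)}=\gamma_{j,i}$ for the appropriate index $j$, where the $\gamma_{j,i}$ are exactly the eigenvalues of $\widehat{\mathcal{A}}^{(i)}$. Since each $\widehat{\mathcal{A}}^{(i)}$ is Hermitian, all of its eigenvalues are real, and hence $\widehat{\bm{\lambda}}^{(i)}\in\R$ for every $i=1,2,\ldots,n$. These two steps are routine applications of results already available in the excerpt.

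The delicate point, and the step I expect to be the main obstacle, is translating reality of the Fourier coefficients $\widehat{\bm{\lambda}}^{(i)}$ into reality of the eigentube $\bm{\lambda}$ itself. Because $\bm{\lambda}={\tt ifft}(\widehat{\bm{\lambda}})$, the tube $\bm{\lambda}$ lies in $\R^{1\times 1\times n}$ only when $\widehat{\bm{\lambda}}$ is conjugate-even in the sense of \eqref{fft 1}; having real coefficients alone does not guarantee this for a general complex f-Hermitian tensor. I would handle this by additionally exploiting the DFT symmetry of a real tensor: when $\mathcal{A}$ is real one has $\widehat{\mathcal{A}}^{(i)}=\overline{\widehat{\mathcal{A}}^{(n-i+2)}}$, so the two Hermitian slices share the same (real) spectrum and therefore $\widehat{\bm{\lambda}}^{(i)}=\widehat{\bm{\lambda}}^{(n-i+2)}$; together with the reality established above this makes $\widehat{\bm{\lambda}}$ conjugate-even, whence $\bm{\lambda}$ is real. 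A small technicality here is the matching of eigenvalues of equal modulus under the prescribed ordering, which must be checked so that the index $j$ stays consistent across the paired slices. If, on the other hand, the intended meaning of a ``real eigentube'' is simply that the Fourier coefficients $\widehat{\bm{\lambda}}^{(i)}$ are real, then the conclusion follows directly from the first two steps and this last paragraph is unnecessary.
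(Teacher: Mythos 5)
Your proposal is correct and follows essentially the same route as the paper: pass to the Fourier domain, note that f-Hermitian means every frontal slice $\widehat{\mathcal{A}}^{(i)}$ is a Hermitian matrix, and conclude that each Fourier coefficient $\widehat{\bm{\lambda}}^{(i)}$ is a real eigenvalue of a Hermitian matrix. The paper's proof contains one ingredient you do not use, a Rayleigh-quotient computation
\[
\bm{\lambda}=\dfrac{\langle \vec{\mathcal{U}},\mathcal{A}*\vec{\mathcal{U}}\rangle}{\langle \vec{\mathcal{U}},\vec{\mathcal{U}}\rangle}=\dfrac{\langle\mathcal{A}^H*\vec{\mathcal{U}},\vec{\mathcal{U}}\rangle}{\langle \vec{\mathcal{U}},\vec{\mathcal{U}}\rangle}=\bm{\lambda}^H,
\]
but since $\bm{\lambda}=\bm{\lambda}^H$ is equivalent to all $\widehat{\bm{\lambda}}^{(i)}$ being real, this establishes exactly what you obtain slice by slice, so the two arguments are not genuinely different. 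The subtlety you flag in your final paragraph is real, and the paper in fact glosses over it: after noting that the eigenvalues of the Hermitian slices are real, the paper simply asserts that $\widehat{\bm{\lambda}}$ is a real conjugate-even tube and hence that $\bm{\lambda}$ is real, without verifying the symmetry $\widehat{\bm{\lambda}}^{(i)}=\widehat{\bm{\lambda}}^{(n-i+2)}$ that conjugate-evenness requires. As you observe, that symmetry comes for free only when $\mathcal{A}$ is itself real (so that $\widehat{\mathcal{A}}^{(i)}$ and $\widehat{\mathcal{A}}^{(n-i+2)}$ are complex conjugates of one another and share the same real spectrum, up to the matching issue for eigenvalues of equal modulus that you mention); for a genuinely complex f-Hermitian tensor the spatial-domain entries of $\bm{\lambda}$ need not all be real, and the provable conclusion is $\bm{\lambda}=\bm{\lambda}^H$, i.e., realness of the Fourier coefficients. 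On the one point where care is needed, your write-up is more careful than the paper's.
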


\begin{proof}
Let $\{\bm{\lambda},\vec{\mathcal{U}}\}$ be an eigenpair of $\mathcal{A}$. Then   
\[
\bm{\lambda}=\dfrac{\langle \vec{\mathcal{U}},\mathcal{A}*\vec{\mathcal{U}}\rangle}
{\langle \vec{\mathcal{U}},\vec{\mathcal{U}}\rangle}=
\dfrac{\langle\mathcal{A}^H*\vec{\mathcal{U}},\vec{\mathcal{U}}\rangle}
{\langle \vec{\mathcal{U}},\vec{\mathcal{U}}\rangle}=\bm{\lambda}^H.
	\]
Since $\mathcal{A}$ is f-Hermitian, we have that each frontal slice of 
$\widehat{\mathcal{A}}$ is a Hermitian matrix. Thus the eigenvalues of each frontal slice
of $\widehat{\mathcal{A}}$ are real. Each eigentube of $\mathcal{A}$ is given by 
$\bm{\lambda}={\tt ifft}(\widehat{\bm{\lambda}},[\,], 3)$, where $\widehat{\bm{\lambda}}$
is a real conjugate-even tube. Consequently, $\bm{\lambda}$ is a real conjugate-even tube.
\end{proof}

Similarly as for matrices, we can relate eigentubes to tensor singular values. Let 
$\mathcal{A}\in\K^{n\times m}_p$ be a third-order tensor with tubal rank $r>0$. Consider
the analogue of the singular tube decomposition \eqref{eq 18} of $\mathcal{A}$ with the 
singular tubes are ordered according to
\[
\left\Vert \bm{s}_1 \right\Vert_F \geq \left\Vert \bm{s}_2 \right\Vert_F \geq \ldots 
\geq \left\Vert \bm{s}_r \right\Vert_F \geq \Vert \bm{s}_{r+1}\Vert_F=\ldots=
\Vert\bm{s}_{\min\{m,n\}}\Vert_F=0.
\]
Then
\[
\begin{array}{rcll}
\mathcal{A}*\vec{\mathcal{V}}_i&=&\vec{\mathcal{U}}_i*\bm{s}_i,\; i=1,2,\ldots,r, \; \;&
\mathcal{A}*\vec{\mathcal{V}}_i=0 , \; i=r+1,r+2,\ldots, m,\\
\mathcal{A}^H *\vec{\mathcal{U}}_i&=&\vec{\mathcal{V}}_i*\bm{s}_i,\; i=1,2,\ldots,r, 
\; \;& \mathcal{A}^H *\vec{\mathcal{U}}_i=0 , \; i=r+1,r+2,\ldots, n.
\end{array}
\]
This implies that $\vec{\mathcal{V}}_1,\ldots,\vec{\mathcal{V}}_m$ are eigenslices of 
$\mathcal{A}^H *\mathcal{A}$, $\vec{\mathcal{U}}_1,\ldots,\vec{\mathcal{U}}_m$ are 
eigenslices of $\mathcal{A}*\mathcal{A}^H$, and $\bm{s}_i^2$, $i=1,2,\ldots,r$, are 
nonzero eigentubes of $\mathcal{A}^H *\mathcal{A}$ and $\mathcal{A}*\mathcal{A}^H$.

Numerical methods for spectral factorization of a real nonsymmetric matrix use the real 
Schur form. We therefore are interested in the analogous factorization of third-order 
tensors. 

\begin{theorem}[Real t-Schur decomposition]\label{theo real-schur}
Let $\mathcal{A}\in \mathbb{R}^{p\times p\times n}$ be a real third-order tensor. There is
a real f-orthogonal tensor $\mathcal{Q}\in\R^{p\times p\times n}$ such that 
\[
\mathcal{Q}*\mathcal{A}*\mathcal{Q}^H=\mathcal{R}=\begin{pmatrix}
\mathcal{R}_{1,1} &  \mathcal{R}_{1,2} & \dots & \dots& \mathcal{R}_{1,m}\\
	0 & \mathcal{R}_{2,2} &  \mathcal{R}_{2,2} & \dots & \mathcal{R}_{2,m}\\
	0 & \dots & \ddots & \dots & \vdots\\
	\vdots & \dots &  & & \vdots\\
	0 & \vdots & & & \mathcal{R}_{m,m}
\end{pmatrix}\in\R^{p\times p \times n},
\]
where the $\mathcal{R}_{i,i}$ are tensors of size $1\times 1 \times n$ or 
$2\times 2 \times n$. The tensor $\mathcal{R}$ is \emph{f-quasi-triangular}.
\end{theorem}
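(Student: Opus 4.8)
The plan is to work in the Fourier domain and reduce each frontal slice of $\widehat{\mathcal{A}}$ by a unitary similarity, exactly as in the proofs of the t-LU and t-Hessenberg reductions, but now with the additional constraint that the assembled tensors $\mathcal{Q}$ and $\mathcal{R}$ be \emph{real}. First I would set $\widehat{\mathcal{A}}={\tt fft}(\mathcal{A},[\,],3)$ and recall the conjugate-even symmetry of the DFT for real tensors: $\widehat{\mathcal{A}}^{(1)}$ is a real matrix, and $\widehat{\mathcal{A}}^{(i)}$ and $\widehat{\mathcal{A}}^{(n-i+2)}$ are complex conjugates of one another (with a second self-conjugate real slice $\widehat{\mathcal{A}}^{(n/2+1)}$ when $n$ is even). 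The target identity $\mathcal{Q}*\mathcal{A}*\mathcal{Q}^H=\mathcal{R}$ block-diagonalizes, through the ${\tt bdiag}$ product rule, into the decoupled slice equations $\widehat{\mathcal{Q}}^{(i)}\widehat{\mathcal{A}}^{(i)}(\widehat{\mathcal{Q}}^{(i)})^H=\widehat{\mathcal{R}}^{(i)}$, so it suffices to produce these slice-wise reductions in a way that respects conjugate symmetry (so that $\mathcal{Q}$, $\mathcal{R}$ are real after ${\tt ifft}$) and that yields a common block structure (so that $\mathcal{R}$ is f-quasi-triangular).

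The core of the argument distinguishes the real slices from the conjugate pairs. For the real slice $\widehat{\mathcal{A}}^{(1)}$ (and, when $n$ is even, $\widehat{\mathcal{A}}^{(n/2+1)}$) I would invoke the classical \emph{real} Schur decomposition to obtain a real orthogonal $\widehat{\mathcal{Q}}^{(1)}$ and a real quasi-upper-triangular $\widehat{\mathcal{R}}^{(1)}$ whose $2\times 2$ diagonal blocks carry the complex-conjugate eigenvalue pairs. Using the real rather than the complex Schur form here is essential: a complex Schur reduction of a real slice would in general produce complex factors, destroying the realness of slice $1$ and hence of the whole tensor. For each conjugate pair $\{i,\,n-i+2\}$ with $i\ge 2$, I would reduce only the slice $\widehat{\mathcal{A}}^{(i)}$ by a complex Schur factorization $\widehat{\mathcal{Q}}^{(i)}\widehat{\mathcal{A}}^{(i)}(\widehat{\mathcal{Q}}^{(i)})^H=\widehat{\mathcal{R}}^{(i)}$ with $\widehat{\mathcal{R}}^{(i)}$ upper triangular, and then \emph{define} $\widehat{\mathcal{Q}}^{(n-i+2)}=\overline{\widehat{\mathcal{Q}}^{(i)}}$ and $\widehat{\mathcal{R}}^{(n-i+2)}=\overline{\widehat{\mathcal{R}}^{(i)}}$. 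Conjugating the slice equation and using $\overline{\widehat{\mathcal{A}}^{(i)}}=\widehat{\mathcal{A}}^{(n-i+2)}$ shows that this paired slice automatically satisfies its own reduction identity, with a unitary factor and a conjugated triangular right-hand side.

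I would then assemble $\widehat{\mathcal{Q}}$ and $\widehat{\mathcal{R}}$ from these slices and set $\mathcal{Q}={\tt ifft}(\widehat{\mathcal{Q}},[\,],3)$ and $\mathcal{R}={\tt ifft}(\widehat{\mathcal{R}},[\,],3)$. Since $\widehat{\mathcal{Q}}$ and $\widehat{\mathcal{R}}$ are conjugate-even by construction, $\mathcal{Q}$ and $\mathcal{R}$ are real; each $\widehat{\mathcal{Q}}^{(i)}$ is unitary, so $\mathcal{Q}$ is f-orthogonal; and the slice identities reassemble, again via the ${\tt bdiag}$ product rule, into $\mathcal{Q}*\mathcal{A}*\mathcal{Q}^H=\mathcal{R}$, completing the construction.

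The step I expect to be the main obstacle is guaranteeing that $\mathcal{R}$ is f-quasi-triangular with a \emph{single} block partition shared by all frontal slices, rather than merely slice-wise quasi-triangular. The complex paired slices are upper triangular, hence compatible with any partition into $1\times 1$ and $2\times 2$ blocks, so they impose no constraint; when $n$ is odd the partition is simply the one delivered by the real Schur form of slice $1$, and the proof is clean. The delicate case is $n$ even, where the two real slices $\widehat{\mathcal{A}}^{(1)}$ and $\widehat{\mathcal{A}}^{(n/2+1)}$ may call for $2\times 2$ blocks in incompatible positions. I would handle this by exploiting the freedom to reorder eigenvalues within each real Schur form (via Givens-type swaps of diagonal blocks) so that the $2\times 2$ blocks of both real slices occupy a common set of index pairs, whence a single $1\times 1$/$2\times 2$ partition accommodates every slice; flagging this reconciliation as the one genuinely technical point that requires care in otherwise degenerate configurations.
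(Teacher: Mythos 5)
Your proof is correct and follows essentially the same route as the paper: pass to the Fourier domain, apply Schur reductions slice by slice, and exploit the fact that the first frontal slice of $\widehat{\mathcal{A}}$ (and, for $n$ even, the slice $n/2+1$) is real. In fact your write-up is considerably more complete than the paper's two-line sketch, which silently omits both the conjugate-pairing of the slices $i$ and $n-i+2$ needed to make $\mathcal{Q}$ and $\mathcal{R}$ real after the inverse FFT, and the alignment of the $1\times 1$/$2\times 2$ diagonal block partition across the real slices that you correctly flag and resolve by reordering.
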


\begin{proof}
The factorization follows from the real Schur factorization of the matrices in each 
frontal slice of the tensor $\widehat{\mathcal{A}}$, and that the first frontal slice of 
$\widehat{\mathcal{A}}$ is real, because 
$\widehat{\mathcal{A}}^{(1)}=\dfrac{1}{\sqrt{n}}\mathcal{A}^{(i)}$.
\end{proof}

For $\mathcal{A}\in\K^{p\times p}_n$, let
\[
r_i=\dim\left(\Null\left(\widehat{\mathcal{A}}^{(i)}\right)\right),\;\; i=1,2,\ldots,n,
\quad r=\displaystyle{\min_{1\leq i\leq n}}r_i.
\]

\begin{definition}\label{defnull}
Let $\mathcal{A}\in\K^{\ell\times p}_n$. The range of $\mathcal{A}$ is the t-span of the 
$p$ lateral slices of $\mathcal{A}$, i.e.,
\[
\Range\left(\mathcal{A}\right)=\left\{\mathcal{A}*\vec{\mathcal{X}}:\,
\vec{\mathcal{X}}\in\K^{p}_n\right\},
\]
and the null space of $\mathcal{A}$ is defined as
\begin{align*}
\tNull \left(\mathcal{A}\right)  = &\left\{\vec{\mathcal{X}}_1,\ldots, \vec{\mathcal{X}}_r
\in\K_n^p:\,\widehat{\vec{\mathcal{X}}}_j^{(i)}\in
\Null\left(\widehat{\mathcal{A}}^{(i)}\right)\; \text{for}~ i=1,2,\ldots,n, \right. \\
 &\left.\quad \text{with}\; \Vert\widehat{\vec{\mathcal{X}}}_j^{(i)}\Vert_F 
 \geq\Vert\widehat{\vec{\mathcal{X}}}_{j+1}^{(i)}\Vert_F,\, j=1,2,\ldots,r_i 
 \right\}.
\end{align*}
\end{definition}

It is clear from Definition \ref{defnull} that if $\vec{\mathcal{X}}\in\K^p_n$ is in the 
null space of a tensor $\mathcal{A}\in\K_n^{p\times p}$, then 
$\mathcal{A}*\vec{\mathcal{X}}=0$.

We introduce some definitions about eigentubes.
\smallskip

\begin{definition}
Let $\mathcal{A}\in\K^{p\times p}_n$.
\begin{itemize}
\item 
An eigentube $\bm{\lambda}$ of $\mathcal{A}$ is said to have algebraic f-multiplicity $m$
if it is a root of f-multiplicity $m$ of the characteristic polynomial.
\item 
Let $\bm{\lambda}$ be an eigentube of $\mathcal{A}$, and let $m_i$ be the geometric 
multiplicity of $\widehat{\bm{\lambda}}^{(i)}$ for $i=1,2,\ldots,n$. Then we refer to
$m=\min_{1\leq i\leq n}m_i$ as the geometric f-multiplicity of 
$\bm{\lambda}$.
\item 
An eigentube $\bm{\lambda}$ is said to be f-simple (f-semi-simple) if for each 
$1\leq i\leq n$, the eigenvalue $\widehat{\bm{\lambda}}^{(i)}$ of 
$\widehat{\mathcal{A}}^{(i)}$ is simple (semi-simple).
\end{itemize}		
\end{definition}
\smallskip

\begin{proposition}
Let $\bm{\lambda}$ be an eigentube of $\mathcal{A}\in\K^{p\times p}_n$ with geometric 
f-multiplicity $m$. Then 
$m=\tdim\left(\tNull\left(\mathcal{A}-\bm{\lambda}*\mathcal{I}_p\right)\right)$.
\end{proposition}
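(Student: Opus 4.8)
The plan is to transfer the whole statement into the Fourier domain, where eigentubes become eigenvalues and the tensor null space becomes a collection of ordinary matrix null spaces. First I would set $\mathcal{B}=\mathcal{A}-\bm{\lambda}*\mathcal{I}_p$ and compute its frontal slices in the Fourier domain. Since the identity tensor $\mathcal{I}_p$ has $\widehat{\mathcal{I}_p}^{(i)}=I_p$ for every $i$, and the tube--tensor product satisfies $\widehat{(\bm{\lambda}*\mathcal{I}_p)}^{(i)}=\widehat{\bm{\lambda}}^{(i)}\,\widehat{\mathcal{I}_p}^{(i)}$, it follows that
\[
\widehat{\mathcal{B}}^{(i)}=\widehat{\mathcal{A}}^{(i)}-\widehat{\bm{\lambda}}^{(i)}I_p,\qquad i=1,2,\ldots,n.
\]
Consequently $\Null(\widehat{\mathcal{B}}^{(i)})$ is precisely the eigenspace of $\widehat{\mathcal{A}}^{(i)}$ associated with the eigenvalue $\widehat{\bm{\lambda}}^{(i)}$, so its dimension $r_i=\dim\Null(\widehat{\mathcal{B}}^{(i)})$ equals the geometric multiplicity $m_i$ of $\widehat{\bm{\lambda}}^{(i)}$.

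The second step is to identify $\tdim(\tNull(\mathcal{B}))$ with $r=\min_{1\le i\le n}r_i$. Here I would invoke the Fourier characterization of t-linear independence recalled before Proposition \ref{prop 9}: lateral slices $\vec{\mathcal{X}}_1,\ldots,\vec{\mathcal{X}}_s$ are t-linearly independent exactly when the vectors $\widehat{\vec{\mathcal{X}}}_1^{(i)},\ldots,\widehat{\vec{\mathcal{X}}}_s^{(i)}$ are linearly independent for each $i=1,2,\ldots,n$. For the upper bound, if $s$ t-linearly independent slices lie in $\tNull(\mathcal{B})$, then for every $i$ the $s$ vectors $\widehat{\vec{\mathcal{X}}}_k^{(i)}$ form a linearly independent subset of $\Null(\widehat{\mathcal{B}}^{(i)})$, forcing $r_i\ge s$ for all $i$ and hence $s\le r$. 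For achievability I would construct $r$ slices directly: for each $i$ choose $r$ linearly independent vectors in $\Null(\widehat{\mathcal{B}}^{(i)})$ (possible because $r_i\ge r$), assign them as the frontal slices $\widehat{\vec{\mathcal{X}}}_k^{(i)}$, and recover $\vec{\mathcal{X}}_k$ by inverse FFT; these are t-linearly independent and belong to the null space by Definition \ref{defnull}. Thus $\tdim(\tNull(\mathcal{B}))=r$.

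Combining the two steps gives
\[
\tdim\bigl(\tNull(\mathcal{A}-\bm{\lambda}*\mathcal{I}_p)\bigr)=r=\min_{1\le i\le n}r_i=\min_{1\le i\le n}m_i=m,
\]
the last equality being the definition of the geometric f-multiplicity of $\bm{\lambda}$. I expect the only delicate point to be the achievability half of the second step, namely verifying that one can actually assemble $r$ simultaneously t-linearly independent slices drawn from null spaces of possibly different dimensions. This is where the definition $r=\min_i r_i$ is essential, since it guarantees that each $\Null(\widehat{\mathcal{B}}^{(i)})$ is large enough to supply the required $r$ independent vectors; the ordering condition $\Vert\widehat{\vec{\mathcal{X}}}_j^{(i)}\Vert_F\ge\Vert\widehat{\vec{\mathcal{X}}}_{j+1}^{(i)}\Vert_F$ in Definition \ref{defnull} can be imposed afterward by a harmless reordering and does not affect the count.
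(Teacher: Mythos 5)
Your proposal is correct and follows essentially the same route as the paper's own proof: pass to the Fourier domain, identify $\dim\Null\bigl(\widehat{\mathcal{A}}^{(i)}-\widehat{\bm{\lambda}}^{(i)}I\bigr)$ with the geometric multiplicity $m_i$ of $\widehat{\bm{\lambda}}^{(i)}$, and take the minimum over the frontal slices. The only difference is one of completeness: the paper states these identifications and concludes immediately, whereas you also verify explicitly that $\tdim\bigl(\tNull(\mathcal{B})\bigr)$ equals $\min_i r_i$ (both the upper bound via slice-wise t-linear independence and the achievability construction), which the paper leaves implicit in its definition of the tensor null space.
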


\begin{proof}
Let $\mathcal{A}\in \K^{p\times p}_n$ and 
\[
m_i=\dim\left(\Null\left(\widehat{\mathcal{A}}^{(i)}-
\widehat{\bm{\lambda}}^{(i)}I\right)\right),\quad i=1,2,\ldots,n.
\]
Since $\bm{\lambda}$ has geometric multiplicity $m$, we have $m=\min_{1\leq i\leq n}m_i$,
and the result follows.
\end{proof}
\smallskip
	
\begin{definition}
Assume that the tensor $\mathcal{A}\in \K^{p\times p}_n$ has $\ell$ distinct eigentubes 
$\bm{\lambda}_1,\bm{\lambda}_2,\ldots,\bm{\lambda}_\ell$. The index of $\bm{\lambda}_i$ is
the smallest integer such that 
\[
\tNull\left(\mathcal{A}-\bm{\lambda}_i * \mathcal{I}_p \right)^{\ell_i}=
\tNull\left(  \mathcal{A}-\bm{\lambda}_i * \mathcal{I}_p \right)^{k}, \;\; \forall 
k\geq \ell_i.
\]
\end{definition}
\smallskip

Let $\ell_i$ be the index of the eigentube $\bm{\lambda}_i$ of $\mathcal{A}$. Then 
$\ell_i$ is the maximum index of the eigenvalues $\widehat{\bm{\lambda}}_i^{(j)}$ of the
matrix $\widehat{\mathcal{A}}^{(j)}$ for $j=1,2,\ldots,n$. Note that if $\bm{\lambda}_i$ 
is an f-semi-simple eigentube of $\mathcal{A}$, then $\ell_i=1$.

\begin{proposition}\label{propsum}
Let $\vec{\mathcal{X}}\in\K^{p}_n$. Then $\vec{\mathcal{X}}$ can be represented as
\[
\vec{\mathcal{X}}=\sum_{i=1}^{\ell}\vec{\mathcal{X}}_i, 
\]
where $\vec{\mathcal{X}}_i\in\tNull\left(\mathcal{A}-
\bm{\lambda}_i*\mathcal{I}_p\right)^{\ell_i}$ and $\ell_i$ is the index of 
$\bm{\lambda}_i$ for $i=1,\ldots,\ell$.
\end{proposition}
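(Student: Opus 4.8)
The plan is to push everything into the Fourier domain, where the statement becomes the classical generalized-eigenspace (primary) decomposition applied one frontal slice at a time, and then to reassemble the pieces tube by tube. Concretely, I would set $\widehat{\mathcal{A}}={\tt fft}(\mathcal{A},[\,],3)$ and $\widehat{\vec{\mathcal{X}}}={\tt fft}(\vec{\mathcal{X}},[\,],3)$. For each frontal index $k=1,\dots,n$, the slice $\widehat{\mathcal{A}}^{(k)}\in\C^{p\times p}$ is an ordinary matrix whose eigenvalues are exactly the frontal entries $\widehat{\bm{\lambda}}_i^{(k)}$ of the eigentubes, and $(\mathcal{A}-\bm{\lambda}_i*\mathcal{I}_p)^{\ell_i}$ has $k$th Fourier slice $(\widehat{\mathcal{A}}^{(k)}-\widehat{\bm{\lambda}}_i^{(k)}I)^{\ell_i}$. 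Because $\ell_i$ is the index of $\bm{\lambda}_i$, it dominates the index of the eigenvalue $\widehat{\bm{\lambda}}_i^{(k)}$ of $\widehat{\mathcal{A}}^{(k)}$ for every $k$, so the chain of kernels has stabilized and $\Null((\widehat{\mathcal{A}}^{(k)}-\widehat{\bm{\lambda}}_i^{(k)}I)^{\ell_i})$ equals the full generalized eigenspace of $\widehat{\mathcal{A}}^{(k)}$ for $\widehat{\bm{\lambda}}_i^{(k)}$. By Definition \ref{defnull}, membership $\vec{\mathcal{X}}_i\in\tNull((\mathcal{A}-\bm{\lambda}_i*\mathcal{I}_p)^{\ell_i})$ is then equivalent to $\widehat{\vec{\mathcal{X}}}_i^{(k)}$ lying in that generalized eigenspace for each $k$.

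Next, for each fixed $k$ I invoke the classical generalized eigenspace decomposition of $\C^p$ for the matrix $\widehat{\mathcal{A}}^{(k)}$: writing $\mu_1^{(k)},\dots,\mu_{s_k}^{(k)}$ for its distinct eigenvalues and $G_t^{(k)}$ for the associated generalized eigenspaces, one has $\C^p=\bigoplus_{t}G_t^{(k)}$, hence $\widehat{\vec{\mathcal{X}}}^{(k)}=\sum_{t}w_t^{(k)}$ with $w_t^{(k)}\in G_t^{(k)}$. Since the distinct eigentubes $\bm{\lambda}_1,\dots,\bm{\lambda}_\ell$ collectively exhaust, at every slice, all distinct eigenvalues of $\widehat{\mathcal{A}}^{(k)}$ (each eigenvalue of $\widehat{\mathcal{A}}^{(k)}$ is the $k$th entry of some eigentube), each $w_t^{(k)}$ can be attached to an index $i$ with $\widehat{\bm{\lambda}}_i^{(k)}=\mu_t^{(k)}$. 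I then let $\widehat{\vec{\mathcal{X}}}_i^{(k)}$ be the sum of the $w_t^{(k)}$ assigned to $i$, and set $\vec{\mathcal{X}}_i={\tt ifft}(\widehat{\vec{\mathcal{X}}}_i,[\,],3)$. Linearity of the inverse transform together with $\sum_i\widehat{\vec{\mathcal{X}}}_i^{(k)}=\sum_t w_t^{(k)}=\widehat{\vec{\mathcal{X}}}^{(k)}$ for all $k$ yields $\sum_{i=1}^{\ell}\vec{\mathcal{X}}_i=\vec{\mathcal{X}}$, while the first paragraph guarantees each $\vec{\mathcal{X}}_i$ lies in the required generalized null space.

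The one genuinely delicate point, and the step I expect to require care, is that two distinct eigentubes $\bm{\lambda}_i\neq\bm{\lambda}_{i'}$ may nevertheless \emph{collide} at a particular slice, i.e. $\widehat{\bm{\lambda}}_i^{(k)}=\widehat{\bm{\lambda}}_{i'}^{(k)}$, so that a single generalized eigenspace $G_t^{(k)}$ is claimed by several eigentube indices. Because the proposition asserts only \emph{existence} of a representation, this is harmless: at each such slice I assign the shared component $w_t^{(k)}$ to one admissible index (say the smallest) and give the others a zero contribution. This sacrifices any canonical choice — indeed the decomposition would be forced to be unique only under the separation hypothesis that the eigentubes stay distinct on every frontal slice — but it does produce valid tubes $\vec{\mathcal{X}}_i$ summing to $\vec{\mathcal{X}}$, which is all that is needed. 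I would flag this collision handling explicitly, since it is the only place where passing from the slicewise matrix decomposition to a genuinely tube-indexed decomposition is not automatic.
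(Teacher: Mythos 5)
Your proof follows essentially the same route as the paper's: pass to the Fourier domain, apply the classical generalized-eigenspace (primary) decomposition to each frontal slice $\widehat{\mathcal{A}}^{(k)}$, and reassemble the slicewise components into tubes via the inverse transform. Your explicit treatment of the case where two distinct eigentubes collide on a particular frontal slice is in fact more careful than the paper's own proof, which asserts uniqueness of the slicewise components and passes over this possibility in silence.
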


\begin{proof}
Let $\vec{\mathcal{X}}\in\K^{p}_n$ and let $\widehat{\vec{\mathcal{X}}}$ denote the 
associated tensor in the Fourier domain. The frontal slices 
$\widehat{\vec{\mathcal{X}}}^{(k)}$, $k=1,2,\ldots,n$, of $\widehat{\vec{\mathcal{X}}}$ 
live in $\C^p$. There is a unique 
\[
x_{j,k}\in\Null\left(\widehat{\mathcal{A}}^{(k)}-
\widehat{\bm{\lambda}}_j^{(k)}I\right)^{\ell_i}
\]
such that 
\[
\widehat{\vec{\mathcal{X}}}^{(k)}=\sum_{j=1}^{\ell}x_{j,k}.
\]
The sequences $\{x_{1,1},\ldots,x_{\ell,1}\},\; \ldots, \; \{x_{1,n},\ldots,x_{\ell,n}\}$
determine the sequence of lateral slices 
$\{\vec{\mathcal{X}}_1,\ldots,\vec{\mathcal{X}}_\ell\}$, such that
\[
\vec{\mathcal{X}}=\sum_{j=1}^{\ell}\vec{\mathcal{X}}_j,
\]
with $\vec{\mathcal{X}}_j\in\tNull\left(\mathcal{A}-\bm{\lambda}_i * 
\mathcal{I}_p\right)^{\ell_i}$.
\end{proof}

\section{Methods for eigenpair computation of third-order tensors} \label{sec 5}
This section describes several methods for computing one or several eigenpairs of 
third-order tensors.

\subsection{Single lateral slice iteration}
The power method is one of the oldest and simplest techniques for computing the
eigenvalue of largest magnitude and an associated eigenvector of a square matrix; see,
e.g., \cite{golub2000eigenvalue}. It is also known as von Mises iteration \cite{vonmises}. 
Presently, this method finds applications to page-rank computations; see
\cite{boubekraoui,BP}. We describe a t-power method for computing the eigenslice of 
largest magnitude of a third-order tensor, and discuss some of its properties. These
properties are well known in matrix computations. Kilmer et al. \cite{kilmer2013third} 
present an algorithm that is slightly different from the t-power method described below, 
however, without defining eigentubes and eigenslices.

\subsection{The t-power method}
This method determines a sequence of lateral slices $\mathcal{A}^k*\vec{\mathcal{V}}_0$, 
$k=1,2,\ldots~$, suitably scaled, where $\vec{\mathcal{V}}_0\in\K^p_n$. Under suitable 
conditions on $\mathcal{A}$ and $\vec{\mathcal{V}}_0$ (see below), the sequence generated
converges to an eigenslice associated with the eigentube of largest norm. Here and below 
``norm'' refers to the Frobenius norm. The main advantage of the t-power method, when 
compared with other methods for computing eigenpairs, is the fairly small amount of 
computer memory required. Algorithm \ref{alg 3} describes the computations with the 
t-power method.
	
\begin{algorithm}[H]
\caption{The t-power method}\label{alg 3}
\textbf{Input:} $\mathcal{A}\in \K^{p\times p}_n$, $\vec{\mathcal{V}}_0\in \K^{p}_n$ 
nonzero.\\
\textbf{Output:} An eigenpair $\left\{\bm{\lambda},\vec{\mathcal{U}}\right\}$ of 
$\mathcal{A}$, where $\bm{\lambda}$ is the eigentube of largest norm.
\begin{algorithmic}[1]
\FOR{$k=1,2,\ldots~$ until convergence}
\STATE $\bm{\alpha}_k=\text{t-}\max(\mathcal{A}*\vec{\mathcal{V}}_{k-1})$
\STATE $\vec{\mathcal{V}}_k=(\mathcal{A}*\vec{\mathcal{V}}_{k-1})/\bm{\alpha}_k$
\ENDFOR
\STATE $\vec{\mathcal{U}}=\vec{\mathcal{V}}_k$, $\bm{\lambda}=\bm{\alpha}_k$
\end{algorithmic}
\end{algorithm}

The function $\text{t-}\max~$ of a lateral slice in line $2$ of Algorithm \ref{alg 3} 
returns the tube of largest norm of all tubes in the lateral slice. The stopping criterion
will be discussed in Section \ref{sec 6}. The following result generalizes the classical 
convergence result for the matrix power method to the t-power method. The proof is 
analogous to the matrix case and therefore is omitted.

\begin{theorem}\label{theo t-pm}
Let $\mathcal{A}\in\K^{p\times p}_n\,$ have only one eigentube, $\bm{\lambda}_1$, of 
largest norm. The initial lateral slice $\vec{\mathcal{V}}_0$ for the t-power method 
either has no tube in the invariant subspace associated with $\bm{\lambda}_1$, or the 
sequence $\vec{\mathcal{V}}_k$, $k=1,2,\ldots~$, generated by Algorithm \ref{alg 3} 
converges to an eigenslice associated with $\bm{\lambda}_1$, and the sequence 
$\bm{\alpha}_k$, $k=1,2,\ldots~$, converges to $\bm{\lambda}_1$.
\end{theorem}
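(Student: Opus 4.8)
The plan is to pass to the Fourier domain, where the t-product block-diagonalizes and the single t-power iteration decouples into $n$ independent matrix power iterations, one for each frontal slice $\widehat{\mathcal{A}}^{(i)}$. First I would record what Algorithm \ref{alg 3} becomes after applying ${\tt fft}(\cdot,[\,],3)$. Since division of a tensor column by a tube acts entry-wise on Fourier frontal slices, line 3 reads, for each $i=1,\dots,n$,
\[
\widehat{\vec{\mathcal{V}}}_k^{(i)}=\frac{\widehat{\mathcal{A}}^{(i)}\,\widehat{\vec{\mathcal{V}}}_{k-1}^{(i)}}{\widehat{\bm{\alpha}}_k^{(i)}},\qquad \widehat{\bm{\alpha}}_k^{(i)}=\bigl(\widehat{\mathcal{A}}^{(i)}\widehat{\vec{\mathcal{V}}}_{k-1}^{(i)}\bigr)_{m_k},
\]
where $m_k\in\{1,\dots,p\}$ is the index of the tube selected by $\text{t-}\max$. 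Thus in each frequency the algorithm is exactly the matrix power method with normalization by a fixed component, and the unrolled iterate is $\widehat{\vec{\mathcal{V}}}_k^{(i)}=(\widehat{\mathcal{A}}^{(i)})^{k}\widehat{\vec{\mathcal{V}}}_0^{(i)}\big/\prod_{j=1}^{k}\widehat{\bm{\alpha}}_j^{(i)}$.

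Next I would invoke the classical matrix argument in each slice. Writing $\widehat{\vec{\mathcal{V}}}_0^{(i)}$ in a Jordan basis of $\widehat{\mathcal{A}}^{(i)}$ and isolating the part belonging to the eigenvalue $\gamma_{1,i}$ of maximal modulus, the strict gap $|\gamma_{1,i}|>|\gamma_{2,i}|$ forces $(\widehat{\mathcal{A}}^{(i)})^{k}\widehat{\vec{\mathcal{V}}}_0^{(i)}/\gamma_{1,i}^{k}$ to converge to the spectral projection $c_{1,i}\,u_{1,i}$ of $\widehat{\vec{\mathcal{V}}}_0^{(i)}$ onto the dominant eigenspace (the defective case being absorbed by the standard Jordan-block estimate). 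The dichotomy in the statement is precisely $c_{1,i}=0$ for some $i$ (then $\vec{\mathcal{V}}_0$ has no tube in the invariant subspace associated with $\bm{\lambda}_1$) versus $c_{1,i}\neq 0$ for all $i$. In the latter case the direction of $\widehat{\vec{\mathcal{V}}}_k^{(i)}$ converges to that of $u_{1,i}$, independently of the scaling sequence, because the normalization only rescales and never rotates the iterate. Assembling over $i$ and applying ${\tt ifft}(\cdot,[\,],3)$, a continuous linear bijection, shows that $\vec{\mathcal{V}}_k$ converges in direction to the lateral slice $\vec{\mathcal{U}}$ whose $i$th Fourier frontal slice is $u_{1,i}$, i.e., to an eigenslice associated with $\bm{\lambda}_1$.

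It then remains to pin the scale and identify $\lim_k\bm{\alpha}_k$. Once the directions $\widehat{\vec{\mathcal{V}}}_k^{(i)}$ stabilize, the selected index $m_k$ is eventually constant, say $m$, and $(u_{1,i})_m\neq 0$ for every $i$ (otherwise the division would be ill defined in the limit, contradicting that $\text{t-}\max$ returns the largest-norm tube). Dividing by $\bm{\alpha}_k$ fixes the $m$th tube of $\vec{\mathcal{V}}_k$ to the unit tube $\bm e$, so the convergence is genuine, not merely projective, and
\[
\widehat{\bm{\alpha}}_{k}^{(i)}=\bigl(\widehat{\mathcal{A}}^{(i)}\widehat{\vec{\mathcal{V}}}_{k-1}^{(i)}\bigr)_{m}\longrightarrow \frac{\bigl(\widehat{\mathcal{A}}^{(i)}u_{1,i}\bigr)_m}{(u_{1,i})_m}=\gamma_{1,i}=\widehat{\bm{\lambda}}_1^{(i)},
\]
whence $\bm{\alpha}_k\to\bm{\lambda}_1$ by continuity of the inverse transform.

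I expect the main obstacle to be the coupling introduced by $\text{t-}\max$: the normalizing index $m_k$ is chosen by a single global, aggregate-over-frequencies criterion rather than per slice, so the $n$ scalar iterations are not literally independent. The clean way around this is to decouple the two concerns, first proving directional convergence via the scaling-invariance of the power iteration (where the particular $\bm{\alpha}_k$ is irrelevant), and only afterwards reading off $\lim_k\bm{\alpha}_k$ from the stabilized directions. A secondary point to make precise is that the hypothesis ``only one eigentube of largest norm'' must be used in its slice-wise form, namely that each $\widehat{\mathcal{A}}^{(i)}$ has a unique eigenvalue of maximal modulus $\gamma_{1,i}$; this is exactly what supplies the strict gap $|\gamma_{1,i}|>|\gamma_{2,i}|$ required for the matrix power method in every frequency.
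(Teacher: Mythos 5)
Your proposal is correct in substance, and it is worth noting that the paper does not actually write out a proof of this theorem: it declares the argument ``analogous to the matrix case'' and omits it, the nearest written-out argument being the proof of Theorem \ref{coro 4}, which stays in the tensor domain, expands $\vec{\mathcal{V}}_0=\sum_i \bm{c}_i*\vec{\mathcal{X}}_i$ via Proposition \ref{propsum}, and bounds the tail by $\sum_{i\geq 2}\Vert\bm{\lambda}_i/\bm{\lambda}_1\Vert_F^k\,\Vert\bm{c}_i*\vec{\mathcal{X}}_i\Vert_F$. Your route --- unfolding Algorithm \ref{alg 3} in the Fourier domain into $n$ componentwise-normalized matrix power iterations and handling the $\text{t-}\max$ coupling separately --- is the more careful way to make that analogy precise, and it buys something concrete: you correctly identify that convergence requires the strict gap $\vert\gamma_{1,i}\vert>\vert\gamma_{2,i}\vert$ in \emph{every} frequency $i$, which is genuinely stronger than the aggregate condition $\Vert\bm{\lambda}_1\Vert_F>\Vert\bm{\lambda}_2\Vert_F$; by the per-slice ordering built into the definition of eigentubes one always has $\vert\widehat{\bm{\lambda}}_i^{(j)}\vert\leq\vert\widehat{\bm{\lambda}}_1^{(j)}\vert$, but the Frobenius inequality permits equality in some frequency $j$, in which case $(\bm{\lambda}_i/\bm{\lambda}_1)^k$ does not tend to zero and the iteration stalls in that slice --- a point the paper's tensor-domain bound glosses over. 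Your separation of directional convergence (scaling-invariant, hence immune to the particular $\bm{\alpha}_k$) from the identification of $\lim_k\bm{\alpha}_k$ is also the right device for neutralizing the fact that $\text{t-}\max$ selects a single index globally rather than per frequency. Two small points remain if you were to write this out in full: the eventual constancy of the selected index $m_k$ presupposes that the limiting eigenslice has a unique tube of largest norm, and the claim $(u_{1,i})_m\neq 0$ for all $i$ needs a genuine argument or hypothesis, since a tube of largest aggregate norm can still have a vanishing Fourier component in some frequency. Neither gap is worse than what the paper itself leaves implicit.
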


\begin{theorem}\label{coro 4}
Let $\mathcal{A}\in \K^{p\times p}_n$ have $p$ distinct eigentubes 
$\bm{\lambda}_1,\bm{\lambda}_2,\ldots,\bm{\lambda}_p$ ordered increasingly, i.e., 
$\left\Vert \bm{\lambda}_1\right\Vert_F> \left\Vert \bm{\lambda}_2\right\Vert_F\geq\ldots
\geq \left\Vert \bm{\lambda}_p\right\Vert_F$. Let the lateral slice $\vec{\mathcal{V}}_0$ 
have a component in the invariant subspace associated to $\bm{\lambda}_1$. Then the 
sequence $\vec{\mathcal{V}}_k$, $k=1,2,\ldots~$, generated by Algorithm \ref{alg 3} 
converges to an eigenslice associated with the eigentube $\bm{\lambda}_1$.
\end{theorem}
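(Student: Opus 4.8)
The plan is to push the entire iteration of Algorithm \ref{alg 3} into the Fourier domain, where the t-product becomes a slicewise matrix product and the computation splits into $n$ decoupled matrix power iterations. Applying {\tt fft} along the third mode and using the relation ${\tt bdiag}(\widehat{\mathcal{C}})={\tt bdiag}(\widehat{\mathcal{A}})\,{\tt bdiag}(\widehat{\mathcal{B}})$ for $\mathcal{C}=\mathcal{A}*\mathcal{B}$, together with the fact that dividing a tensor by a tube amounts to dividing each frontal slice $\widehat{\mathcal{A}}^{(i)}$ by the scalar $\widehat{\bm{\alpha}}_k^{(i)}$, the step $\vec{\mathcal{V}}_k=(\mathcal{A}*\vec{\mathcal{V}}_{k-1})/\bm{\alpha}_k$ becomes
\[
\widehat{\vec{\mathcal{V}}}_k^{(i)}=\frac{1}{\widehat{\bm{\alpha}}_k^{(i)}}\,\widehat{\mathcal{A}}^{(i)}\widehat{\vec{\mathcal{V}}}_{k-1}^{(i)},\qquad i=1,2,\ldots,n.
\]
Unrolling the recursion gives $\widehat{\vec{\mathcal{V}}}_k^{(i)}=\beta_{k,i}^{-1}(\widehat{\mathcal{A}}^{(i)})^k\widehat{\vec{\mathcal{V}}}_0^{(i)}$ with $\beta_{k,i}=\prod_{j=1}^{k}\widehat{\bm{\alpha}}_j^{(i)}\neq 0$; since $\beta_{k,i}$ is a scalar it only normalizes the iterate and does not affect its direction.

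First I would apply the classical convergence theory of the matrix power method to each frontal slice separately. By the definition of eigentubes, the eigenvalues of $\widehat{\mathcal{A}}^{(i)}$ are $\gamma_{1,i},\ldots,\gamma_{p,i}$ with $|\gamma_{1,i}|\geq\cdots\geq|\gamma_{p,i}|$ and $\widehat{\bm{\lambda}}_1^{(i)}=\gamma_{1,i}$. Writing $\widehat{\vec{\mathcal{V}}}_0^{(i)}=\sum_{j=1}^{p}c_{j,i}w_{j,i}$ in a basis adapted to $\widehat{\mathcal{A}}^{(i)}$ (with $w_{j,i}$ the eigenvector for $\gamma_{j,i}$), one has
\[
(\widehat{\mathcal{A}}^{(i)})^k\widehat{\vec{\mathcal{V}}}_0^{(i)}=\gamma_{1,i}^k\left(c_{1,i}w_{1,i}+\sum_{j\geq 2}c_{j,i}\left(\frac{\gamma_{j,i}}{\gamma_{1,i}}\right)^k w_{j,i}\right),
\]
so the subdominant terms vanish as $k\to\infty$ provided $|\gamma_{1,i}|>|\gamma_{2,i}|$. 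The hypothesis that $\vec{\mathcal{V}}_0$ has a component in the invariant subspace associated with $\bm{\lambda}_1$ translates, slice by slice, into $c_{1,i}\neq 0$; hence the direction of $\widehat{\vec{\mathcal{V}}}_k^{(i)}$ converges to that of $w_{1,i}$.

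Next I would reassemble the slices. The $\text{t-}\max$ normalization keeps each $\widehat{\vec{\mathcal{V}}}_k^{(i)}$ bounded and selects a definite limit, so that $\widehat{\bm{\alpha}}_k^{(i)}\to\gamma_{1,i}=\widehat{\bm{\lambda}}_1^{(i)}$ and $\widehat{\vec{\mathcal{V}}}_k^{(i)}$ converges to a normalized dominant eigenvector in every slice. Transforming back with {\tt ifft}, the sequence $\vec{\mathcal{V}}_k$ converges to a nonvanishing lateral slice $\vec{\mathcal{U}}$ whose $\widehat{\vec{\mathcal{U}}}^{(i)}$ is an eigenvector of $\widehat{\mathcal{A}}^{(i)}$ for $\widehat{\bm{\lambda}}_1^{(i)}$, i.e. $\mathcal{A}*\vec{\mathcal{U}}=\bm{\lambda}_1*\vec{\mathcal{U}}$, while $\bm{\alpha}_k\to\bm{\lambda}_1$. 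One may also read the result as a direct corollary of Theorem \ref{theo t-pm}: the strict gap $\Vert\bm{\lambda}_1\Vert_F>\Vert\bm{\lambda}_2\Vert_F$ makes $\bm{\lambda}_1$ the unique eigentube of largest norm, and the component hypothesis places us in the convergent branch of that theorem's dichotomy.

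The hard part is the gap between the two notions of dominance. The argument above requires strict \emph{slicewise} dominance $|\gamma_{1,i}|>|\gamma_{2,i}|$ (and simplicity of $\gamma_{1,i}$) in \emph{every} slice $i$, whereas the stated hypothesis only supplies the Frobenius inequality $\Vert\bm{\lambda}_1\Vert_F>\Vert\bm{\lambda}_2\Vert_F$, that is $\sum_i|\gamma_{1,i}|^2>\sum_i|\gamma_{2,i}|^2$. Since the eigentube ordering only guarantees $|\gamma_{1,i}|\geq|\gamma_{2,i}|$ within each slice, the Frobenius gap by itself permits equality in some slices, where the corresponding scalar iteration need not converge (for instance, two eigenvalues of equal modulus). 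The delicate point is therefore to argue that the assumed distinctness of the eigentubes together with the ordering is precisely what secures slicewise strict dominance and simplicity; this is the condition under which the per-slice classical result applies, and it should be made explicit for the convergence proof to be rigorous.
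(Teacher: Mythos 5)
Your argument is the paper's own proof transported to the Fourier domain. The paper expands $\vec{\mathcal{V}}_0=\sum_{i}\bm{c}_i*\vec{\mathcal{X}}_i$ via Proposition \ref{propsum}, applies $\mathcal{A}^k$, and argues that the terms $(\bm{\lambda}_i/\bm{\lambda}_1)^k*\bm{c}_i*\vec{\mathcal{X}}_i$ vanish; your slicewise expansion $\widehat{\vec{\mathcal{V}}}_0^{(i)}=\sum_{j}c_{j,i}w_{j,i}$ is exactly that decomposition read off one frontal slice at a time, and the normalization bookkeeping with $\beta_{k,i}$ matches the paper's decision to ignore the scaling factors $\bm{\alpha}_k$. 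So the two routes are the same computation.

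The concern you raise at the end is the substantive point, and you should know that the paper does not resolve it either. The paper's proof passes from $\left\Vert\bm{\lambda}_i\right\Vert_F<\left\Vert\bm{\lambda}_1\right\Vert_F$ to the claim that $\left\Vert\bm{\lambda}_i/\bm{\lambda}_1\right\Vert_F^k\to 0$, but the powers of the quotient tube are governed by the per-slice ratios $\vert\gamma_{i,j}\vert/\vert\gamma_{1,j}\vert$, and a strict inequality of Frobenius norms is compatible with $\vert\gamma_{i,j}\vert=\vert\gamma_{1,j}\vert$ in some slice $j$, where the corresponding scalar iteration fails to converge and $(\bm{\lambda}_i/\bm{\lambda}_1)^k$ does not tend to $\bm{0}$. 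Your tentative repair --- that distinctness of the eigentubes together with the ordering secures slicewise strict dominance --- does not work: two distinct tubes can perfectly well have entries of equal modulus in some frontal slice (for instance a complex-conjugate pair of eigenvalues of $\widehat{\mathcal{A}}^{(j)}$), and the eigentube ordering only guarantees $\vert\gamma_{1,j}\vert\geq\vert\gamma_{2,j}\vert$. What the argument actually needs, and what should be added to the hypotheses, is $\vert\widehat{\bm{\lambda}}_1^{(j)}\vert>\vert\widehat{\bm{\lambda}}_2^{(j)}\vert$ for every $j=1,2,\ldots,n$, i.e.\ a simple dominant eigenvalue in every frontal slice of $\widehat{\mathcal{A}}$; likewise the component condition $c_{1,j}\neq 0$ must hold in every slice, which is how the paper's condition $\bm{c}_1\neq\bm{0}$ should be read. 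Under that strengthened hypothesis your per-slice argument closes cleanly; without it, neither your proof nor the paper's goes through.
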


\begin{proof}
By Proposition \ref{propsum}, we have
\[
\vec{\mathcal{V}}_0= \sum_{i=1}^{p}\bm{c}_i *\vec{\mathcal{X}}_i,
\]
where $\vec{\mathcal{X}}_i\in\tNull\left(\mathcal{A}-\bm{\lambda}_i*\mathcal{I}_p\right)$.
Since $\vec{\mathcal{V}}_0$ has a component in the invariant subspace associated with
$\bm{\lambda}_1$, it follows that $\bm{c}_1\neq 0$.

We are interested in the space spanned by $\vec{\mathcal{V}}_k$. This space is independent
of the scaling factors $\bm{\alpha}_k\ne\bm{0}$ in Algorithm \ref{alg 3}. Therefore we may
assume in this proof may ignore the $\bm{\alpha}_k$ for all $k$. We then have for 
$k=1,2,\ldots~$ that
\[ 
\vec{\mathcal{V}}_k = \mathcal{A}^k *\vec{\mathcal{V}}_0 = 
\sum_{i=1}^{p} \bm{c}_i * \bm{\lambda}_i^k * \vec{\mathcal{X}}_i= 
\bm{c}_1 * \bm{\lambda}_1^k * \vec{\mathcal{X}}_1 + \sum_{i=2}^{p} 
\left(\dfrac{\bm{\lambda}_i}{\bm{\lambda}_1}\right)^k*\bm{c}_i *\vec{\mathcal{X}}_i.
\]
Therefore
\[
\begin{split}
\left\Vert \vec{\mathcal{V}}_k - \bm{c}_1*\bm{\lambda}_1^k*
\vec{\mathcal{X}}_1 \right\Vert_F= \left\Vert \sum_{i=2}^{p} 
\left(\dfrac{\bm{\lambda}_i}{\bm{\lambda}_1}\right)^k*
\bm{c}_i*\vec{\mathcal{X}}_i \right\Vert_F 
&\leq \sum_{i=2}^{p} \left\Vert  \left(\dfrac{\bm{\lambda}_i}{\bm{\lambda}_1}\right)^k*
\bm{c}_i*\vec{\mathcal{X}}_i \right\Vert_F\\
&\leq \sum_{i=2}^{p} \left\Vert \dfrac{\bm{\lambda}_i}{\bm{\lambda}_1}\right\Vert_F^k 
\left\Vert \bm{c}_i * \vec{\mathcal{V}}_i \right\Vert_F.
\end{split}
\]
Since $\left\Vert\bm{\lambda}_i\right\Vert_F<\left\Vert \bm{\lambda}_1\right\Vert_F$ for 
all $i>1$, the sum in the right-hand side converges to zero as $k$ increases. It follows
that 
\[
\lim_{k\to+\infty}\vec{\mathcal{V}}_k =\lim_{k\to+\infty} \bm{c}_1 *\bm{\lambda}_1^k * 
\vec{\mathcal{X}}_1.
\]
This shows the theorem.
\end{proof}
	
We can see from the proof of Theorem \ref{coro 4}, that the rate of convergence is larger,
the smaller the quotients 
$\left\Vert\bm{\lambda}_i\right\Vert_F/\left\Vert\bm{\lambda}_1\right\Vert_F$ are for 
$i=2,\ldots,p$. 
	
The rate of convergence of the t-power method may be improved by applying the method to 
the shifted tensor $\mathcal{A}+\bm{\sigma}*\mathcal{I}_p$ for a suitable shift 
$\bm{\sigma}$, instead of applying the method to $\mathcal{A}$. This follows from the
proof of Theorem \ref{coro 4}. We omit the details.

\subsection{The inverse t-power method} 
In its simplest form this method can be applied to compute an eigenslice associated with 
the eigentube of smallest norm. Then the method is obtained by applying the t-power method
to the tensor $\mathcal{A}^{-1}$. This yields the sequence
\[
\vec{\mathcal{V}}_k=(\mathcal{A}^{-1}* \vec{\mathcal{V}}_{k-1})/\bm{\alpha}_k,\quad 
k=1,2,\ldots~,
\]
of lateral slices, where $\vec{\mathcal V}_0$ is an initial user-supplied slice. The
sequence converges to the lateral slice associated with the eigentube of largest norm
of $\mathcal{A}^{-1}$, if this eigentube is unique and if $\vec{\mathcal V}_0$ contains a
component of this slice. Since the eigentubes of $\mathcal{A}$ are the inverse of the 
eigentubes of $\mathcal{A}^{-1}$, we obtain converge to the lateral slice associated with 
the eigentube of smallest norm of $\mathcal{A}$. Algorithm \ref{alg 4} describes a shifted
version that can be used to determine an eigenslice associated with the eigentube that is
closest to $\bm{\sigma}$, if this eigentube is unique and the initial slice 
$\vec{\mathcal V}_0$ contains a component of this eigenslice.

\begin{algorithm}[H]
\caption{The shifted inverse t-power method}\label{alg 4}
\textbf{Input:} $\mathcal{A}\in\K^{p\times p}_n$, $\vec{\mathcal{V}}_0\in\K^{p}_n$, and 
$\bm{\sigma}\in \mathbb{K}_n$.\\
\textbf{Output:} The eigenpair $\{\bm{\lambda},\vec{\mathcal{U}}\}$ of $\mathcal{A}$, 
where $\bm{\lambda}$ is the eigentube closest to $\bm{\sigma}$.\vskip1pt 
\begin{algorithmic}[1]
\FOR{$k=1,2,\ldots~$ until convergence}
\STATE Evaluate $\vec{\mathcal{W}}=
\left(\mathcal{A}-\bm{\sigma}*\mathcal{I}_p\right)^{-1}*\vec{\mathcal{V}}_{k-1}$
\STATE $\bm{\alpha}_{k}=\text{t-}\max(\vec{\mathcal{W}})$
\STATE $\vec{\mathcal{V}}_k=\vec{\mathcal{W}}/\bm{\alpha}_k$
\ENDFOR
\STATE $\vec{\mathcal{U}}=\vec{\mathcal{V}}_k$, 
$\bm{\lambda}=\dfrac{\bm{e}}{\bm{\alpha}_k+\bm{\sigma}}$
\end{algorithmic}
\end{algorithm}

The evaluation of $\vec{\mathcal{W}}$ in line 2 of Algorithm \ref{alg 4} can be carried 
out, e.g., by using a t-QR, t-LU, or t-Choleski factorization of 
$\mathcal{A}-\bm{\sigma}*\mathcal{I}_p$; the t-Choleski factorization is described in 
\cite{RU3}.

\subsection{Deflation}\label{subs 5.2}
Deflation is a well known technique in matrix computation. It makes it possible to use the
power method to compute more eigenpairs of a matrix than the pair associated with the 
eigenvalue of largest magnitude. This section describes how deflation can be used together
with the t-power method. 

Let the tensor $\mathcal{A}\in\K^{p\times p}_{n}$ have a unique eigentube 
$\bm{\lambda}_1$ of largest norm, and let $\vec{\mathcal{U}}_1$ be an associated 
eigenslice. Assume that the eigenpair $\{\bm{\lambda}_1,\vec{\mathcal{U}}_1\}$ has been 
computed, e.g., by the t-tensor power method. The idea behind deflation is to apply 
Algorithm \ref{alg 3}~to the tensor
\begin{equation}\label{deflation}
\mathcal{A}_1=\mathcal{A}-\bm{\sigma}*\vec{\mathcal{U}}_1*\vec{\mathcal{V}}^H,
\end{equation}
where $\vec{\mathcal{V}}\in\K^{p}_n$ is an arbitrary lateral slice such that 
$\vec{\mathcal{V}}^H*\vec{\mathcal{U}}_1=\bm{e}$, and $\bm{\sigma}\in\mathbb{K}_n$ is an
appropriate shift. The tensor $\mathcal{A}_1$ has the same eigentubes as $\mathcal{A}$
except for $\bm{\lambda}_1$, which is transformed to $\bm{\lambda}_1-\bm{\sigma}$, as is 
shown in the next theorem. 

\begin{theorem}
Let $\mathcal{A}\in\K^{p\times p}_n$ have the eigenpairs
$\{\bm{\lambda}_i,\vec{\mathcal{U}}_i\}_{i=1}^\ell$, and let the tube 
$\bm{\sigma}\in\K_n$ be such that $\bm{\lambda}_i\ne\bm{\lambda}_1 -\bm{\sigma}$, 
$i=1,2,\ldots,\ell$. Then the tensor $\mathcal{A}_1$, defined by \eqref{deflation}, has 
the eigenpairs 
\[
\{(\bm{\lambda}_1-\bm{\sigma},\vec{\mathcal{U}}_1), (\bm{\lambda}_i, 
\vec{\mathcal{U}}_i+\bm{\gamma}_i*\vec{\mathcal{U}}_1)_{i=2}^\ell\},
\]
with 
\[
\bm{\gamma}_i=\dfrac{\bm{\sigma}*\vec{\mathcal{V}}^H*\vec{\mathcal{U}}_i}
{\bm{\sigma}-(\bm{\lambda}_1-\bm{\lambda}_i)},\quad i=2,3,\ldots,\ell.
\]
\end{theorem}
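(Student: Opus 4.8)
The plan is to verify directly that each listed pair satisfies the eigenslice relation \eqref{eslice} with $\mathcal{A}$ replaced by the deflated tensor $\mathcal{A}_1$, working entirely in the t-product algebra. The three ingredients I would use are the eigenslice relations $\mathcal{A}*\vec{\mathcal{U}}_i=\bm{\lambda}_i*\vec{\mathcal{U}}_i=\vec{\mathcal{U}}_i*\bm{\lambda}_i$, the normalization $\vec{\mathcal{V}}^H*\vec{\mathcal{U}}_1=\bm{e}$, and the commutativity of tubes with lateral slices from \eqref{commute2}, together with bilinearity and associativity of $*$. Throughout I abbreviate the tube $\bm{\mu}_i:=\vec{\mathcal{V}}^H*\vec{\mathcal{U}}_i\in\K_n$, noting $\bm{\mu}_1=\bm{e}$.

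First I would treat the deflated slice $\vec{\mathcal{U}}_1$. Expanding \eqref{deflation} and using associativity,
\[
\mathcal{A}_1*\vec{\mathcal{U}}_1=\mathcal{A}*\vec{\mathcal{U}}_1-\bm{\sigma}*\vec{\mathcal{U}}_1*\big(\vec{\mathcal{V}}^H*\vec{\mathcal{U}}_1\big)=\bm{\lambda}_1*\vec{\mathcal{U}}_1-\bm{\sigma}*\vec{\mathcal{U}}_1=(\bm{\lambda}_1-\bm{\sigma})*\vec{\mathcal{U}}_1,
\]
where $\vec{\mathcal{V}}^H*\vec{\mathcal{U}}_1=\bm{e}$ and \eqref{commute2} were used to pull the tubes across $\vec{\mathcal{U}}_1$. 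This establishes the first eigenpair, and the hypothesis $\bm{\lambda}_1\ne\bm{\lambda}_1-\bm{\sigma}$ (i.e.\ $\bm{\sigma}\not\equiv\bm{0}$) confirms the eigentube genuinely moved. Next, for $i\ge 2$ I set $\vec{\mathcal{W}}_i=\vec{\mathcal{U}}_i+\bm{\gamma}_i*\vec{\mathcal{U}}_1$ with $\bm{\gamma}_i$ defined by the stated quotient; the division is legitimate precisely because the hypothesis $\bm{\lambda}_i\ne\bm{\lambda}_1-\bm{\sigma}$ means $\bm{\sigma}-(\bm{\lambda}_1-\bm{\lambda}_i)$ is nonvanishing, so every Fourier entry of the denominator is nonzero. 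Expanding $\mathcal{A}_1*\vec{\mathcal{W}}_i$ by bilinearity, and using $\vec{\mathcal{V}}^H*\vec{\mathcal{W}}_i=\bm{\mu}_i+\bm{\gamma}_i$ together with \eqref{commute2}, gives $\bm{\lambda}_i*\vec{\mathcal{U}}_i+\bm{\gamma}_i*\bm{\lambda}_1*\vec{\mathcal{U}}_1-\bm{\sigma}*(\bm{\mu}_i+\bm{\gamma}_i)*\vec{\mathcal{U}}_1$. Collecting the $\vec{\mathcal{U}}_1$-component, the target relation $\mathcal{A}_1*\vec{\mathcal{W}}_i=\bm{\lambda}_i*\vec{\mathcal{W}}_i$ reduces to the single tube identity
\[
\bm{\gamma}_i*\big(\bm{\lambda}_1-\bm{\sigma}-\bm{\lambda}_i\big)=\bm{\sigma}*\bm{\mu}_i,
\]
which holds by the very definition of $\bm{\gamma}_i$; the $\vec{\mathcal{U}}_i$-component matches automatically.

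The main point requiring care is the bookkeeping in this last step, in two respects. First, the denominator above is $\bm{\lambda}_1-\bm{\sigma}-\bm{\lambda}_i=-\big(\bm{\sigma}-(\bm{\lambda}_1-\bm{\lambda}_i)\big)$, so one must track the overall sign to match the displayed formula for $\bm{\gamma}_i$; this is the place where I would double-check carefully against the scalar Wielandt deflation, since a sign there is easy to misplace. Second, for $\vec{\mathcal{W}}_i$ to be an admissible eigenslice it must be nonvanishing: this follows because $\vec{\mathcal{U}}_i$ and $\vec{\mathcal{U}}_1$ belong to distinct eigentubes, hence in each Fourier slice $\widehat{\vec{\mathcal{U}}}_i^{(j)}$ and $\widehat{\vec{\mathcal{U}}}_1^{(j)}$ are eigenvectors of $\widehat{\mathcal{A}}^{(j)}$ for distinct eigenvalues and thus linearly independent, so $\widehat{\vec{\mathcal{W}}}_i^{(j)}\ne 0$ for every $j$.

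As an independent cross-check, the entire argument can be run slice-by-slice in the Fourier domain, in the spirit of the earlier factorization proofs. There $\widehat{\mathcal{A}_1}^{(j)}=\widehat{\mathcal{A}}^{(j)}-\widehat{\bm{\sigma}}^{(j)}\,\widehat{\vec{\mathcal{U}}}_1^{(j)}\big(\widehat{\vec{\mathcal{V}}}^{(j)}\big)^H$ is exactly a matrix Wielandt deflation, with $\big(\widehat{\vec{\mathcal{V}}}^{(j)}\big)^H\widehat{\vec{\mathcal{U}}}_1^{(j)}=\widehat{\bm{e}}^{(j)}=1$ inherited from $\vec{\mathcal{V}}^H*\vec{\mathcal{U}}_1=\bm{e}$; assembling the $n$ scalar deflation constants reproduces the tube $\bm{\gamma}_i$ entrywise and pins down the sign once and for all.
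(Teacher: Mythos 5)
Your proposal is correct and follows essentially the same route as the paper: verify directly that $\vec{\mathcal{U}}_1$ is an eigenslice of $\mathcal{A}_1$ for $\bm{\lambda}_1-\bm{\sigma}$ using $\vec{\mathcal{V}}^H*\vec{\mathcal{U}}_1=\bm{e}$, then construct corrected eigenslices for $\bm{\lambda}_2,\ldots,\bm{\lambda}_\ell$ by adding a tube multiple of $\vec{\mathcal{U}}_1$ and solving a single tube equation for the coefficient. The one point you flagged but left open — the sign — deserves to be settled, and your own computation settles it: with the ansatz $\vec{\mathcal{W}}_i=\vec{\mathcal{U}}_i+\bm{\gamma}_i*\vec{\mathcal{U}}_1$ the matching condition is $\bm{\gamma}_i*(\bm{\lambda}_1-\bm{\sigma}-\bm{\lambda}_i)=\bm{\sigma}*\bm{\mu}_i$, whose solution is the \emph{negative} of the displayed formula for $\bm{\gamma}_i$, so your claim that the identity ``holds by the very definition of $\bm{\gamma}_i$'' is off by a sign. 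This is not a defect of your argument but an inconsistency internal to the paper: the theorem statement writes the corrected eigenslice with a plus sign, while the paper's proof constructs $\check{\vec{\mathcal{U}}}_i=\vec{\mathcal{U}}_i-\bm{\gamma}_i*\vec{\mathcal{U}}_1$, which \emph{is} consistent with the displayed $\bm{\gamma}_i$. Two further small differences: the paper first argues that the eigentubes $\bm{\lambda}_2,\ldots,\bm{\lambda}_\ell$ are preserved by showing the left eigenslices of $\mathcal{A}$ remain left eigenslices of $\mathcal{A}_1$ (which tacitly assumes they are f-orthogonal to $\vec{\mathcal{U}}_1$), a step your direct verification makes unnecessary; and your check that $\vec{\mathcal{W}}_i$ is nonvanishing in every Fourier slice, as well as the observation that the denominator of $\bm{\gamma}_i$ is nonvanishing exactly by the hypothesis $\bm{\lambda}_i\ne\bm{\lambda}_1-\bm{\sigma}$, are details the paper omits and are worth keeping.
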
 

\begin{proof}
First note that $\bm{\lambda}_i\ne\bm{\lambda}_1 -\bm{\sigma}$ means that 
\[
\widehat{\bm{\lambda}}_i^{(j)}\neq\widehat{\bm{\lambda}}_1^{(j)}-
\widehat{\bm{\sigma}}^{(j)},\quad 1\leq j\leq n. 
\]
Let $\bm{\sigma}$ be such that $\bm{\lambda}_i\ne\bm{\lambda}_1-\bm{\sigma}$. We will show
that $\{\bm{\lambda}_1 -\bm{\sigma}, \vec{\mathcal{U}}_1\}$ is an eigenpair of 
$\mathcal{A}_1$ and that the deflation procedure preserves the eigentubes 
$\bm{\lambda}_2,\bm{\lambda}_3,\ldots,\bm{\lambda}_\ell$. We have 
\[
\begin{split}
\mathcal{A}_1*\vec{\mathcal{U}}_1=\left(\mathcal{A}-\bm{\sigma}*\vec{\mathcal{U}}_1*
\vec{\mathcal{V}}^H\right)*\vec{\mathcal{U}}_1&= 
\mathcal{A}*\vec{\mathcal{U}}_1 - \bm{\sigma}*\vec{\mathcal{U}}_1*\vec{\mathcal{V}}^H 
*\vec{\mathcal{U}}_1\\
 &= \bm{\lambda}_1*\vec{\mathcal{U}}_1 - \bm{\sigma}*\vec{\mathcal{U}}_1\\
 &=\left(\bm{\lambda}_1 - \bm{\sigma}\right)*\vec{\mathcal{U}}_1.
\end{split}
\]
On the other hand, let $\vec{\mathcal{W}}_j$  be the left eigenslice associated with 
$\bm{\lambda}_j$ for $j=2,3,\ldots,\ell$. Then we have 
\[
\mathcal{A}^H*\vec{\mathcal{W}}_j=\bm{\lambda_j}*\vec{\mathcal{W}}_j,
\]
and since $\{\vec{\mathcal{W}}_j\}_{j=2}^\ell$ are f-orthogonal to $\vec{\mathcal{U}}_1$, we get 
\[
\begin{split}
\mathcal{A}_1^H * \vec{\mathcal{W}}_j=\left(\mathcal{A}-\bm{\sigma}*\vec{\mathcal{U}}_1*
\vec{\mathcal{V}}^H\right)^H *\vec{\mathcal{W}}_j 
&=\left(\mathcal{A}^H - \bm{\sigma}^H * \vec{\mathcal{V}}*\vec{\mathcal{U}}_1^H\right)*
\vec{\mathcal{W}}_j\\
&= \bm{\lambda}_j * \vec{\mathcal{W}}_j,\quad j=2,3,\ldots,\ell.
\end{split}
\]
Thus, deflation preserves the left eigenslices 
$\vec{\mathcal{W}}_2,\vec{\mathcal{W}}_3,\ldots,\vec{\mathcal{W}}_\ell$ and the 
eigentubes $\bm{\lambda}_2,\bm{\lambda}_3,\ldots,\bm{\lambda}_\ell$.

Since $\bm{\lambda}_2,\bm{\lambda}_3,\ldots,\bm{\lambda}_\ell$ are eigentubes of
$\mathcal{A}_1$, we will now construct eigenslices associated with these eigentubes. Let 
for $i=2,3,\ldots,\ell$,
\[
\check{\vec{\mathcal{U}}}_i=\vec{\mathcal{U}}_i - \bm{\gamma}_i*\vec{\mathcal{U}}_1.
\]
Since  
$\mathcal{A}_1 * \check{\vec{\mathcal{U}}}_i=\bm{\lambda}_i * \check{\vec{\mathcal{U}}}_i$, it follows that $\bm{\gamma}_i$  is given by 
\[
\bm{\gamma}_i=\dfrac{\bm{\sigma}*\vec{\mathcal{V}}^H * \vec{\mathcal{U}}_i}
{\bm{\sigma}-\left(\bm{\lambda}_1-\bm{\lambda}_i\right)},\quad i=2,3,\ldots,\ell, 
\]
and $\bm{\gamma}_1=0$ such that $\bm{\sigma}\ne\bm{\lambda}_1 - \bm{\lambda}_i$.  
\end{proof}
	
\smallskip
Deflation techniques for matrices have been discussed in several works, see, e.g., Saad
\cite[Chapter 4.2]{Saad_Yousef}, and the choice of the analogue of the slice 
$\vec{\mathcal{V}}$ has received some attention. Analyses for the matrix case suggest that
we let $\vec{\mathcal{V}}=\vec{\mathcal{W}}_1$, where $\vec{\mathcal{W}}_1$ is the left 
eigenslice of $\mathcal{A}$, or $\vec{\mathcal{V}}=\vec{\mathcal{U}}_1$. The latter choice 
preserves the t-Schur lateral slices.
\smallskip

\begin{proposition}
Let $\vec{\mathcal{U}}_1$ be an eigenslice of $\mathcal{A}$ of norm $1$ associated with 
$\bm{\lambda}_1$ and let 
$\mathcal{A}_1=\mathcal{A}-\bm{\sigma}*\vec{\mathcal{U}}_1*\vec{\mathcal{U}}_1^H$. Then 
the eigentubes of $\mathcal{A}_1$ are 
$\widetilde{\bm{\lambda}}_1=\bm{\lambda}_1-\bm{\sigma}$ and 
$\widetilde{\bm{\lambda}}_j=\bm{\lambda}_j$, $j=2,3,\ldots,\ell$. Moreover, the t-Schur 
lateral slices associated with $\widetilde{\bm{\lambda}}_j$, $j=1,2,\ldots,\ell$ are 
identical with those of $\mathcal{A}$.
\end{proposition}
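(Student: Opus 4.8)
The plan is to recognize the statement as the special case $\vec{\mathcal{V}}=\vec{\mathcal{U}}_1$ of the deflation theorem immediately preceding it, supplemented by a short Fourier-domain argument for the Schur claim. First I would read ``norm $1$'' in the f-normalized sense of the normalization discussed in Section~\ref{sec 3}, so that $\vec{\mathcal{U}}_1^H*\vec{\mathcal{U}}_1=\bm{e}$. Setting $\vec{\mathcal{V}}=\vec{\mathcal{U}}_1$ then makes the hypothesis $\vec{\mathcal{V}}^H*\vec{\mathcal{U}}_1=\bm{e}$ of that theorem hold verbatim, and $\mathcal{A}_1=\mathcal{A}-\bm{\sigma}*\vec{\mathcal{U}}_1*\vec{\mathcal{U}}_1^H$ is exactly \eqref{deflation} for this choice. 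Invoking the deflation theorem yields at once $\widetilde{\bm{\lambda}}_1=\bm{\lambda}_1-\bm{\sigma}$ and $\widetilde{\bm{\lambda}}_j=\bm{\lambda}_j$ for $j=2,\ldots,\ell$, which settles the first assertion.

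For the Schur part, I would pass to the Fourier domain, where the t-product and the conjugate transpose act slicewise, giving
\[
\widehat{\mathcal{A}_1}^{(i)}=\widehat{\mathcal{A}}^{(i)}-\widehat{\bm{\sigma}}^{(i)}\,u_i u_i^H,\qquad u_i:=\widehat{\vec{\mathcal{U}}}_1^{\,(i)},\quad i=1,2,\ldots,n,
\]
where each $u_i$ is a unit eigenvector of $\widehat{\mathcal{A}}^{(i)}$ for the eigenvalue $\widehat{\bm{\lambda}}_1^{(i)}$ (the normalization $\vec{\mathcal{U}}_1^H*\vec{\mathcal{U}}_1=\bm{e}$ becomes $u_i^H u_i=1$). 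For each $i$ I would select a unitary Schur basis $\widehat{\mathcal{Q}}^{(i)}$ whose first column is $u_i$, so that $(\widehat{\mathcal{Q}}^{(i)})^H\widehat{\mathcal{A}}^{(i)}\widehat{\mathcal{Q}}^{(i)}=\widehat{\mathcal{R}}^{(i)}$ is (quasi-)upper-triangular with $\widehat{\bm{\lambda}}_1^{(i)}$ in the leading position and $(\widehat{\mathcal{Q}}^{(i)})^H u_i=e_1$.

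Conjugating the deflated slice by the same basis then gives
\[
(\widehat{\mathcal{Q}}^{(i)})^H\widehat{\mathcal{A}_1}^{(i)}\widehat{\mathcal{Q}}^{(i)}=\widehat{\mathcal{R}}^{(i)}-\widehat{\bm{\sigma}}^{(i)}\,e_1 e_1^H,
\]
which is again (quasi-)upper-triangular: only the $(1,1)$ entry changes, from $\widehat{\bm{\lambda}}_1^{(i)}$ to $\widehat{\bm{\lambda}}_1^{(i)}-\widehat{\bm{\sigma}}^{(i)}$, while all other diagonal entries, and hence all remaining eigentubes, are untouched. Folding the $\widehat{\mathcal{Q}}^{(i)}$ back reproduces exactly the f-orthogonal tensor $\mathcal{Q}$ of the t-Schur decomposition of $\mathcal{A}$ (Theorem~\ref{theo real-schur}), so $\mathcal{A}_1$ admits the t-Schur form $\mathcal{Q}^H*\mathcal{A}_1*\mathcal{Q}$ with the very same $\mathcal{Q}$; the t-Schur lateral slices are therefore identical with those of $\mathcal{A}$.

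I expect the only genuine subtlety to lie in the bookkeeping across frontal slices rather than in any single-slice computation: one must choose the Schur bases $\widehat{\mathcal{Q}}^{(i)}$ consistently (the same eigenvalue ordering, with $u_i$ first) so that the folded $\mathcal{Q}$ is a legitimate f-orthogonal tensor and so that $e_1$ indexes the $\bm{\lambda}_1$-slot in every slice. Once the first Schur column is pinned to $u_i$ in each slice, the identity $(\widehat{\mathcal{Q}}^{(i)})^H u_i=e_1$ together with the rank-one structure of the update makes both the preserved triangularity and the localization of the change to the $(1,1)$ entry automatic, so no further estimates are required.
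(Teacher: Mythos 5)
Your proposal is correct and follows essentially the same route as the paper: the paper's proof takes the t-Schur factorization $\mathcal{A}*\mathcal{U}=\mathcal{U}*\mathcal{R}$ with $\vec{\mathcal{U}}_1$ as the leading Schur slice and computes $\mathcal{A}_1*\mathcal{U}=\mathcal{U}*(\mathcal{R}-\bm{\sigma}*\vec{\mathcal{E}}_1*\vec{\mathcal{E}}_1^H)$, which is exactly your rank-one-update-hits-only-the-$(1,1)$-entry argument, written compactly at the tensor level instead of slice-by-slice in the Fourier domain. The only cosmetic difference is that you additionally invoke the preceding deflation theorem for the eigentube claim, which is redundant (and imports its hypothesis $\bm{\lambda}_i\ne\bm{\lambda}_1-\bm{\sigma}$) since reading the diagonal of the updated f-triangular tensor already gives it.
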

\smallskip

\begin{proof}
Let $\mathcal{A}* \mathcal{U}= \mathcal{U}*\mathcal{R}$ be the t-Schur factorization of 
the tensor $\mathcal{A}$, where $\mathcal{U}\in\K^{p\times p}_n$ is an f-orthogonal tensor
and $\mathcal{R}\in \K^{p\times p}_n$ is an upper f-triangular tensor. Thus,
\[
\mathcal{A}_1 * \mathcal{U}=
\left(\mathcal{A}-\vec{\mathcal{U}}_1* \vec{\mathcal{U}}_1^H\right)*\mathcal{U}= 
\mathcal{U}*\mathcal{R}-\bm{\sigma}*\vec{\mathcal{U}}_1 * \vec{\mathcal{E}}_1^H = 
\mathcal{U}*\left(\mathcal{R}-\bm{\sigma}*\vec{\mathcal{E}}_1*\vec{\mathcal{E}}_1^H
\right), 
\]
which is the desired result since 
$\mathcal{R}-\bm{\sigma}*\vec{\mathcal{E}}_1*\vec{\mathcal{E}}_1^H$ is an upper 
f-triangular tensor.
\end{proof}
	
Deflation can be applied multiple times using several vectors. For the matrix case this is
described in, e.g., \cite[Chapter 4.2.3]{Saad_Yousef}. The techniques can be adapted to 
the tensor case. We omit the details.

\subsection{The tensor subspace iteration method}
Subspace iteration is a powerful technique for computing a few eigenpairs with the largest
eigenvalues of a large matrix; see, e.g., \cite{lehoucq,Saad_Yousef}. This section 
describes the application of subspace iteration to the computation of a few eigenpairs 
with the largest eigentubes of a large tensor $\mathcal{A}\in\K^{p\times p}_n$. The method
is described by Algorithm \ref{alg 5}. 
	
\begin{algorithm}[H]
\caption{t-subspace iteration}\label{alg 5}
\textbf{Input:} $\mathcal{A}\in\mathbb{K}^{p\times p}_n$,  
$\mathcal{X}_0\in\K^{p\times s}_n$, integer $q>0$.\\
\textbf{Output:} Upper f-triangular tensor $\mathcal{R}\in\K^{s\times s}_n$ and an 
f-unitary tensor $\mathcal{U}\in\K^{p\times s}_n$ such that 
$\mathcal{R}=\mathcal{U}^H * \mathcal{A}*\mathcal{U}$ contains the first $s$ eigentubes on
its diagonal.  
\begin{algorithmic}[1]
\FOR {$k=1,2,\ldots~$ until convergence}
	\STATE $\mathcal{X}_k= \mathcal{A}^q*\mathcal{X}_{k-1}$
	\STATE $\mathcal{X}_k=\mathcal{Q}_k*\mathcal{R}_k$
	\STATE $\mathcal{X}_k=\mathcal{Q}_k$
\ENDFOR
\STATE $\mathcal{U}=\mathcal{X}_k$, $\mathcal{R}=\mathcal{U}^H *\mathcal{A}*\mathcal{U}$
\end{algorithmic}
\end{algorithm}

The positive integer $q$ in line 2 of Algorithm \ref{alg 5} determines the number
of products with the tensor $\mathcal{A}$ in each iteration. A value $q>1$ reduces
the number of t-QR factorizations required to achieve a given power of $\mathcal{A}$
and, therefore, may speed up the computations. The t-QR factorization of the tensor 
${\mathcal X}_k\in\K^{p\times s}_n$ is computed in line 3 of Algorithm \ref{alg 5}.
The frontal slices of the tensor ${\mathcal X}_k\in\K^{p\times s}_n$ converge to the 
desired eigenslices under suitable conditions described in the following theorem.
\smallskip

\begin{theorem}\label{theo ss}
Let the eigentubes $\bm{\lambda}_1,\bm{\lambda}_2,\ldots,\bm{\lambda}_n$ of the tensor
$\mathcal{A}\in\K^{p\times p}_n$ satisfy 
\[
\|\bm{\lambda}_1\|>\|\bm{\lambda}_2\|>\ldots>\|\bm{\lambda}_m\|>\|\bm{\lambda}_{m+1}\|
\geq\|\bm{\lambda}_{m+2}\|\geq\ldots\geq\|\bm{\lambda}_n\|.
\]
Introduce 
$\mathcal{Q}=[\vec{\mathcal{Q}}_1,\ldots,\vec{\mathcal{Q}}_m]\in\K^{p\times m}_n$,
where $\vec{\mathcal{Q}}_i$ is the t-Schur lateral slice associated with 
$\bm{\lambda}_i$, $1\leq i\leq m$, and let $\mathcal{P}_i$ denote the spectral 
projector-tensor associated with $\bm{\lambda}_1,\ldots, \bm{\lambda}_m$, and let the 
initial iterate be 
$\mathcal{X}_0=[\vec{\mathcal{X}}_1,\ldots,\vec{\mathcal{X}}_m]\in\K^{p\times m}_n$. 
Assume that 
\[
{\rm rank}\left( \mathcal{P}_i*\left[\vec{\mathcal{X}}_1,\ldots,\vec{\mathcal{X}}_i\right]
\right)=i,\quad i=1,2,\ldots,m.
\]
Then the the $\tspan$ of the $i^{th}$ lateral slice $\vec{\mathcal{X}}_{k,i}$ of 
$\mathcal{X}_k$ generated by Algorithm \ref{alg 5} converges to 
$\tspan\{\vec{\mathcal{Q}_i}\}$.
\end{theorem}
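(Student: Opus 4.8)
The plan is to reduce the tensor subspace iteration to the classical matrix subspace iteration applied independently in each frontal slice of the Fourier domain, where the convergence theory is well understood. The key observation is that every operation in Algorithm~\ref{alg 5} decouples across the Fourier slices: by Proposition~1, the t-product $\mathcal{A}^q*\mathcal{X}_{k-1}$ corresponds to the slice-wise matrix products $\left(\widehat{\mathcal{A}}^{(i)}\right)^q\widehat{\mathcal{X}}_{k-1}^{(i)}$, and the t-QR factorization of line~3 is, by the construction of the t-QR factorization, nothing but the ordinary QR factorization $\widehat{\mathcal{X}}_k^{(i)}=\widehat{\mathcal{Q}}_k^{(i)}\widehat{\mathcal{R}}_k^{(i)}$ performed in each slice $i=1,2,\ldots,n$. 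Thus, applying the Fourier transform along the third dimension turns Algorithm~\ref{alg 5} into $n$ simultaneous, independent runs of matrix subspace iteration on the matrices $\widehat{\mathcal{A}}^{(i)}\in\C^{p\times p}$, each started from the initial block $\widehat{\mathcal{X}}_0^{(i)}\in\C^{p\times m}$.

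First I would verify that the hypotheses of the theorem transfer correctly to each slice. The eigentube ordering $\|\bm{\lambda}_1\|>\cdots>\|\bm{\lambda}_m\|>\|\bm{\lambda}_{m+1}\|\geq\cdots$ is a statement about Frobenius norms of tubes, so I would need to argue that the separation $\|\bm{\lambda}_m\|>\|\bm{\lambda}_{m+1}\|$ forces, for each fixed $i$, a strict gap $|\widehat{\bm{\lambda}}_m^{(i)}|>|\widehat{\bm{\lambda}}_{m+1}^{(i)}|$ between the $m$th and $(m+1)$st eigenvalues of $\widehat{\mathcal{A}}^{(i)}$; here the ordering convention in the Definition of eigentubes (eigenvalues ordered by decreasing magnitude within each slice) is what makes the slice-wise gap meaningful. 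Next, the rank condition $\mathrm{rank}\left(\mathcal{P}_i*[\vec{\mathcal{X}}_1,\ldots,\vec{\mathcal{X}}_i]\right)=i$ must be shown to imply, slice by slice, that the projection of the initial block onto the dominant invariant subspace of $\widehat{\mathcal{A}}^{(i)}$ has full rank---this is precisely the nondegeneracy condition needed to invoke the classical convergence theorem for matrix subspace iteration in slice $i$.

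With these reductions in place, I would invoke the standard matrix result: for each $i$, the column space of $\widehat{\mathcal{X}}_{k}^{(i)}$ (equivalently, the first $i$ columns of $\widehat{\mathcal{Q}}_k^{(i)}$) converges to the dominant invariant subspace of $\widehat{\mathcal{A}}^{(i)}$ spanned by the Schur vectors $\widehat{\vec{\mathcal{Q}}}_1^{(i)},\ldots,\widehat{\vec{\mathcal{Q}}}_i^{(i)}$, at a rate governed by the ratio $|\widehat{\bm{\lambda}}_{i+1}^{(i)}|/|\widehat{\bm{\lambda}}_i^{(i)}|$. Finally, since $\tspan$-convergence of lateral slices is, by the characterization of t-linear independence given before Proposition~\ref{prop 9}, equivalent to slice-wise convergence of the corresponding column spaces for all $i=1,\ldots,n$ simultaneously, applying the inverse Fourier transform recovers the claimed convergence of $\tspan\{\vec{\mathcal{X}}_{k,i}\}$ to $\tspan\{\vec{\mathcal{Q}}_i\}$.

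The main obstacle I anticipate is making the notion of subspace convergence precise and consistent across the two domains. The classical matrix theorem is stated in terms of convergence of subspaces (e.g.\ in the gap metric between the computed and the true invariant subspaces), but the theorem here asserts convergence of $\tspan\{\vec{\mathcal{X}}_{k,i}\}$, a t-span of a single lateral slice. Care is required to show that the slice-wise subspace convergence in all $n$ Fourier slices assembles into convergence of the t-span in the correct sense, and to handle the fact that the individual Schur vectors (as opposed to the nested subspaces they span) are only determined up to phase/tube factors; the precise claim should be read as convergence of t-spans, not of the lateral slices themselves, and I would state it that way to avoid a spurious obstruction.
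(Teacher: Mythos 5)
Your reduction to $n$ independent runs of classical matrix subspace iteration in the Fourier domain is exactly the argument the paper intends: its entire proof is the single sentence ``The proof is analogous to the matrix case; see Saad [Theorem 5.1],'' so your proposal is in fact more explicit than the published one, and the decoupling of the t-product, the t-QR step, and the $\tspan$-convergence across frontal slices is all correct.

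One step you flag as ``to be argued'' genuinely cannot be argued from the stated hypotheses, and you should not paper over it. The ordering $\|\bm{\lambda}_m\|_F>\|\bm{\lambda}_{m+1}\|_F$ of tube norms does \emph{not} force a strict modulus gap $|\widehat{\bm{\lambda}}_m^{(i)}|>|\widehat{\bm{\lambda}}_{m+1}^{(i)}|$ in every slice $i$: with $n=2$, taking $|\widehat{\bm{\lambda}}_m^{(1)}|=|\widehat{\bm{\lambda}}_{m+1}^{(1)}|=3$, $|\widehat{\bm{\lambda}}_m^{(2)}|=1$, $|\widehat{\bm{\lambda}}_{m+1}^{(2)}|=0$ gives $\|\bm{\lambda}_m\|_F^2\propto 10>9\propto\|\bm{\lambda}_{m+1}\|_F^2$ while slice $1$ has no gap at all, so matrix subspace iteration on $\widehat{\mathcal{A}}^{(1)}$ need not converge to a distinguished $m$-dimensional dominant subspace. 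A correct version of your proof must either strengthen the hypothesis to a slice-wise gap (which is the natural t-product analogue of Saad's assumption, consistent with how the paper defines eigentubes by ordering eigenvalues within each slice) or settle for convergence only in those slices where the gap holds. The same slice-wise reading applies to the rank condition on $\mathcal{P}_i*[\vec{\mathcal{X}}_1,\ldots,\vec{\mathcal{X}}_i]$, which you correctly interpret as full rank of the projected initial block in every Fourier slice. With that hypothesis repaired, the rest of your argument---invoking Saad's theorem per slice and reassembling via the equivalence between t-linear notions and their slice-wise counterparts stated before Proposition \ref{prop 9}---goes through, and your closing remark that the convergence must be read as convergence of t-spans (the Schur slices being determined only up to invertible tube factors) is the right way to state the conclusion.
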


\begin{proof}
The proof is analogous to the matrix case; see Saad \cite[Theorem 5.1]{Saad_Yousef} for
the latter.
\end{proof}

We remark that subspace iteration for matrices can be enhanced in a variety of ways, e.g.,
by including projections and deflation; see Saad \cite[Theorem 5.1]{Saad_Yousef}. 
Algorithm \ref{alg 5} can be enhanced similarly.
\smallskip

\subsection{The t-QR algorithm for tensors} 
The QR algorithm is the default method for computing all eigenpairs of a matrix of small 
to moderate size. A fairly recent discussion of the QR algorithm for matrices with 
references is provided in, e.g., \cite{Golub}. This section describes an extension to the 
computation of eigenpairs of third-order tensors. This extension is referred to as the 
t-QR algorithm.

The default QR algorithm used for matrix computation is fairly complicated: it uses 
implicit shifts, and double shifts for real nonsymmetric matrices. In the interest of
brevity, we only discuss a basic version of the t-QR algorithm. Details of a more
efficient implementation that uses implicit and double shifts will be described
elsewhere.

The unshifted t-QR algorithm described by Algorithm \ref{alg 6} is not applied in 
practice due to its slow convergence. Our interest in the algorithm stems from that it is
quite straightforward to show properties of the tensors it computes. Analogous properties
can be shown for the tensors determined by the shifted algorithm discussed below.

\begin{algorithm}[H]
\caption{The unshifted {\tt t-QR} algorithm}\label{alg 6}
\textbf{Input:} $\mathcal{A}_0=\mathcal{A}\in\K^{p\times p}_n$.\\
\textbf{Output:} The eigentubes of $\mathcal{A}$. 
\begin{algorithmic}[1]
\FOR {$k=1$ until the convergence}
\STATE Compute $\mathcal{A}_k=\mathcal{Q}_k * \mathcal{R}_k$
\STATE Compute $\mathcal{A}_{k+1}=\mathcal{R}_k * \mathcal{Q}_k$
\ENDFOR	
\end{algorithmic}
\end{algorithm}
\smallskip

\begin{proposition}
Let $\mathcal{A}\in\K^{p\times p}_n$. Then the tensors 
$\mathcal{A}_1,\mathcal{A}_2,\ldots~$ generated by Algorithm \ref{alg 6} satisfy 
\begin{equation}\label{eq 2.19}
\mathcal{A}_{k+1} =\left(\mathcal{Q}_1 * \mathcal{Q}_2 * \ldots * \mathcal{Q}_k \right)^H 
* \mathcal{A} * \left(\mathcal{Q}_1 * \mathcal{Q}_2 * \ldots * \mathcal{Q}_k \right).
\end{equation}
Moreover,
\begin{equation}\label{eq 2.20}
\mathcal{A}^k = \left(\mathcal{Q}_1 * \mathcal{Q}_2 * \ldots * \mathcal{Q}_k \right)* 
\left(\mathcal{R}_{k} * \mathcal{R}_{k-1} * \ldots * \mathcal{R}_1 \right), 
\end{equation}
where the sequences $\{\mathcal{Q}_i\}_{i=1}^k$ and $\{\mathcal{R}_i\}_{i=1}^k$ are 
defined by the {\tt t-QR} factorization in line 2 of Algorithm \ref{alg 6}. 
\end{proposition}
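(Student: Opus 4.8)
The plan is to prove both identities by a pair of inductions on $k$, driven by the single observation that each sweep of Algorithm \ref{alg 6} performs an f-orthogonal similarity transformation. Throughout I write $\widetilde{\mathcal{Q}}_k=\mathcal{Q}_1*\mathcal{Q}_2*\cdots*\mathcal{Q}_k$ for the accumulated unitary factor, with $\widetilde{\mathcal{Q}}_0=\mathcal{I}_p$, and I take the initial iterate to be $\mathcal{A}_1=\mathcal{A}$. Since each $\mathcal{Q}_k$ is f-orthogonal, $\mathcal{Q}_k^H*\mathcal{Q}_k=\mathcal{I}_p$, so line 2 of the algorithm gives $\mathcal{R}_k=\mathcal{Q}_k^H*\mathcal{A}_k$; substituting into line 3 yields the one-step recurrence $\mathcal{A}_{k+1}=\mathcal{R}_k*\mathcal{Q}_k=\mathcal{Q}_k^H*\mathcal{A}_k*\mathcal{Q}_k$. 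This similarity relation is the whole engine of the proof.

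To establish \eqref{eq 2.19} I would induct on $k$. The base case $\mathcal{A}_2=\mathcal{Q}_1^H*\mathcal{A}*\mathcal{Q}_1$ is exactly the one-step recurrence at $k=1$. For the inductive step, assuming $\mathcal{A}_k=\widetilde{\mathcal{Q}}_{k-1}^H*\mathcal{A}*\widetilde{\mathcal{Q}}_{k-1}$, I substitute into the one-step relation and invoke the conjugate-transpose reversal property $(\mathcal{A}*\mathcal{B})^H=\mathcal{B}^H*\mathcal{A}^H$ to merge the factors, giving $\mathcal{A}_{k+1}=\mathcal{Q}_k^H*\widetilde{\mathcal{Q}}_{k-1}^H*\mathcal{A}*\widetilde{\mathcal{Q}}_{k-1}*\mathcal{Q}_k=\widetilde{\mathcal{Q}}_k^H*\mathcal{A}*\widetilde{\mathcal{Q}}_k$, which is \eqref{eq 2.19}.

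For \eqref{eq 2.20} I would induct again, now reading \eqref{eq 2.19} in a rearranged form obtained by noting that $\widetilde{\mathcal{Q}}_k$ is itself f-orthogonal (a product of f-orthogonal tensors): left-multiplying \eqref{eq 2.19} by $\widetilde{\mathcal{Q}}_k$ gives $\mathcal{A}*\widetilde{\mathcal{Q}}_k=\widetilde{\mathcal{Q}}_k*\mathcal{A}_{k+1}=\widetilde{\mathcal{Q}}_k*\mathcal{Q}_{k+1}*\mathcal{R}_{k+1}=\widetilde{\mathcal{Q}}_{k+1}*\mathcal{R}_{k+1}$. The base case $k=1$ is just the first factorization $\mathcal{A}=\mathcal{Q}_1*\mathcal{R}_1$. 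Assuming $\mathcal{A}^k=\widetilde{\mathcal{Q}}_k*(\mathcal{R}_k*\cdots*\mathcal{R}_1)$, I then compute $\mathcal{A}^{k+1}=\mathcal{A}*\mathcal{A}^k=(\mathcal{A}*\widetilde{\mathcal{Q}}_k)*(\mathcal{R}_k*\cdots*\mathcal{R}_1)=\widetilde{\mathcal{Q}}_{k+1}*(\mathcal{R}_{k+1}*\mathcal{R}_k*\cdots*\mathcal{R}_1)$, which closes the induction.

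I expect no deep obstacle: the argument is essentially bookkeeping, and the only points demanding care are the order reversal incurred when transposing a product of $\mathcal{Q}$'s and the reversed order in which the $\mathcal{R}$ factors accumulate in \eqref{eq 2.20}. Both steps lean on associativity of the t-product, which is immediate from the multiplicative property of ${\tt bdiag}$ established earlier. As a sanity check and alternative route, I would note that applying ${\tt bdiag}$ in the Fourier domain decouples the entire t-QR iteration into $n$ independent classical matrix QR iterations on the frontal slices $\widehat{\mathcal{A}}^{(i)}$, so that the standard matrix identities hold slice-by-slice and then transform back to \eqref{eq 2.19} and \eqref{eq 2.20}.
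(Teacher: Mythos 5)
Your proposal is correct and follows essentially the same route as the paper: both rest on the one-step similarity $\mathcal{A}_{k+1}=\mathcal{Q}_k^H*\mathcal{A}_k*\mathcal{Q}_k$ and the refactorization identity $\mathcal{R}_k*\mathcal{Q}_k=\mathcal{A}_{k+1}=\mathcal{Q}_{k+1}*\mathcal{R}_{k+1}$, which the paper applies by telescoping with ellipses and you package as two clean inductions. Your derivation of \eqref{eq 2.20} via the commutation relation $\mathcal{A}*\widetilde{\mathcal{Q}}_k=\widetilde{\mathcal{Q}}_{k+1}*\mathcal{R}_{k+1}$ extracted from \eqref{eq 2.19} is a slightly tidier formalization of the same computation, and your Fourier-domain slice-wise remark is a valid sanity check.
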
 
\smallskip

\begin{proof}
Let $k\geq 1$. Then 
\[
\begin{split}
\mathcal{A}_{k+1}=\mathcal{R}_{k}*\mathcal{Q}_{k}&=\mathcal{Q}_{k}^H * 
\mathcal{Q}_{k}*\mathcal{R}_{k}*\mathcal{Q}_{k}=\mathcal{Q}_{k}^H * \mathcal{A}_{k}* 
\mathcal{Q}_{k}\\
 &= \mathcal{Q}_{k}^H * \mathcal{R}_{k-1}* \mathcal{Q}_{k-1}* \mathcal{Q}_{k}\\
 &= \mathcal{Q}_{k}^H * \mathcal{Q}_{k-1}^H * \mathcal{A}_{k-2}* \mathcal{Q}_{k-1}* 
 \mathcal{Q}_{k}\\
 &= \left(\mathcal{Q}_1 * \mathcal{Q}_2 * \ldots * \mathcal{Q}_k \right)^H \mathcal{A} 
 \left(\mathcal{Q}_1 * \mathcal{Q}_2 * \ldots * \mathcal{Q}_k \right), 
\end{split}
\]
which gives \eqref{eq 2.19}. On the other hand, 
\[
\begin{split}
\mathcal{A}^k&= \mathcal{Q}_1 * \mathcal{R}_1 * \ldots * \mathcal{Q}_1 * \mathcal{R}_1 \\
 &= \mathcal{Q}_1 * \mathcal{Q}_2 * \mathcal{R}_2 * \ldots * \mathcal{Q}_2 * \mathcal{R}_2
 * \mathcal{R}_1\\
 &= \left(\mathcal{Q}_1 * \ldots * \mathcal{Q}_k\right)* \left(\mathcal{R}_k * \ldots * 
 \mathcal{R}_1\right), 
\end{split}
\]
which shows \eqref{eq 2.20}.
\end{proof}
	
Eq. \eqref{eq 2.19} shows that the tensor $\mathcal{A}_{k+1}\ $ is similar to 
$\mathcal{A}$, while relation \eqref{eq 2.20} provides a {\tt t-QR} factorization of the
tensor $\mathcal{A}^k$. Letting
\begin{equation}\label{rrqq}
\breve{\mathcal{Q}}_k=\mathcal{Q}_1*\mathcal{Q}_2*\ldots * \mathcal{Q}_k \;\text{and }\;  
\breve{\mathcal{R}}_k=\mathcal{R}_k*\mathcal{R}_{k-1}*\ldots * \mathcal{R}_1,
\end{equation}
we obtain 
\[
\mathcal{A}_{k+1}=\breve{\mathcal{Q}}_k^H *\mathcal{A}_k * \breve{\mathcal{Q}}_k.
\]
Thus, the tensor $\mathcal{A}_{k+1}\ $ is f-unitarily similar to $\mathcal{A}_k$.

The following result is concerned with the convergence of Algorithm \ref{alg 6}. 
Convergence is considered under special conditions, but can be shown under less
restrictive conditions as well.
\smallskip

\begin{theorem}
Let $\mathcal{A}\in\K^{n\times n}_p$ be such that
\[
\mathcal{X}^{-1}*\mathcal{A}*\mathcal{X}=
\bm{\Sigma}=\tdiag\left[ \bm{\lambda}_1, \bm{\lambda}_2, \ldots, \bm{\lambda}_n \right],
\]
with 
\begin{equation}\label{ordering}
\left\Vert \bm{\lambda}_1\right\Vert_F > \left\Vert \bm{\lambda}_2\right\Vert_F >\ldots > 
\left\Vert \bm{\lambda}_n\right\Vert_F > 0,
\end{equation}
and assume that all entries of every tube $\bm{\lambda}_k$ are nonvanishing. Let 
$\mathcal{X}^{-1}=\mathcal{L}*\mathcal{U}$ denote the t-{\tt LU} factorization of 
$\mathcal{X}^{-1}$, where $\mathcal{L}$ is a lower f-triangular tensor with the elements 
$\textbf{e}$ on its diagonal; we assume for notational simplicity that pivoting is not 
required. Let $\mathcal{X}=\mathcal{Q}*\mathcal{R}$ be a t-{\tt QR} factorization. Then 
there is an invertible f-diagonal tensor $\mathcal{D}_k$, such that 
$\breve{\mathcal{Q}}_k * \mathcal{D}_k^{-1} $ converges to $\mathcal{Q}$ as $k$ increases, 
where $\breve{\mathcal{Q}}_k$ is given by \eqref{rrqq}.
\end{theorem}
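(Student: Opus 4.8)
The plan is to imitate the classical convergence proof of the matrix QR iteration, carrying the argument through the t-product and ultimately reducing every statement to a frontal-slice statement in the Fourier domain, exactly as in the proofs of the t-LU and real t-Schur theorems above. The starting point is relation \eqref{eq 2.20}, which already exhibits $\breve{\mathcal{Q}}_k * \breve{\mathcal{R}}_k$ as a t-QR factorization of $\mathcal{A}^k$. On the other hand, the diagonalization hypothesis gives $\mathcal{A}^k=\mathcal{X}*\bm{\Sigma}^k*\mathcal{X}^{-1}$, and substituting the two supplied factorizations $\mathcal{X}=\mathcal{Q}*\mathcal{R}$ and $\mathcal{X}^{-1}=\mathcal{L}*\mathcal{U}$ yields $\mathcal{A}^k=\mathcal{Q}*\mathcal{R}*\bm{\Sigma}^k*\mathcal{L}*\mathcal{U}$. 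Comparing these two expressions for $\mathcal{A}^k$ is what will pin down $\breve{\mathcal{Q}}_k$. Note that $\bm{\Sigma}^{-k}$ is well defined because every entry of each $\bm{\lambda}_k$ is nonvanishing, and $\mathcal{R},\mathcal{U}$ are invertible since $\mathcal{X},\mathcal{X}^{-1}$ are.

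First I would isolate the one factor in $\mathcal{A}^k=\mathcal{Q}*\mathcal{R}*\bm{\Sigma}^k*\mathcal{L}*\mathcal{U}$ that is neither triangular nor orthogonal, namely $\bm{\Sigma}^k*\mathcal{L}$, and insert $\bm{\Sigma}^{-k}*\bm{\Sigma}^{k}$ to form $\mathcal{E}_k:=\bm{\Sigma}^k*\mathcal{L}*\bm{\Sigma}^{-k}$. Since $\mathcal{L}$ is f-unit-lower-triangular, $\mathcal{E}_k$ is again f-unit-lower-triangular, and its strictly-below-diagonal $(i,j)$ tube equals the $(i,j)$ tube of $\mathcal{L}$ times $(\bm{\lambda}_i/\bm{\lambda}_j)^k$. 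In the Fourier domain this scales the $(i,j)$ entry of $\widehat{\mathcal{L}}^{(s)}$ by $(\widehat{\bm{\lambda}}_i^{(s)}/\widehat{\bm{\lambda}}_j^{(s)})^k$ for each slice $s$, which tends to $0$ under the ordering of the eigentubes, so $\mathcal{E}_k=\mathcal{I}_n+\mathcal{F}_k$ with $\mathcal{F}_k\to\bm{0}$. Writing $\mathcal{A}^k=\mathcal{Q}*\bigl(\mathcal{I}_n+\mathcal{R}*\mathcal{F}_k*\mathcal{R}^{-1}\bigr)*\mathcal{R}*\bm{\Sigma}^k*\mathcal{U}$ and taking a t-QR factorization $\mathcal{I}_n+\mathcal{R}*\mathcal{F}_k*\mathcal{R}^{-1}=\widetilde{\mathcal{Q}}_k*\widetilde{\mathcal{R}}_k$ of the near-identity middle factor, whose factors both tend to $\mathcal{I}_n$ because the QR factors depend continuously on the slices in the Fourier domain, I obtain a second t-QR factorization $\mathcal{A}^k=(\mathcal{Q}*\widetilde{\mathcal{Q}}_k)*(\widetilde{\mathcal{R}}_k*\mathcal{R}*\bm{\Sigma}^k*\mathcal{U})$ whose orthogonal factor tends to $\mathcal{Q}$.

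Finally I would compare the two t-QR factorizations of $\mathcal{A}^k$ through the essential uniqueness of the t-QR factorization: in each Fourier slice the QR factors of an invertible matrix are unique up to a diagonal unitary (phase) factor, so collecting these phases slice by slice and inverse-transforming produces an f-diagonal f-unitary tensor $\mathcal{D}_k$ with $\breve{\mathcal{Q}}_k=(\mathcal{Q}*\widetilde{\mathcal{Q}}_k)*\mathcal{D}_k$. Hence $\breve{\mathcal{Q}}_k*\mathcal{D}_k^{-1}=\mathcal{Q}*\widetilde{\mathcal{Q}}_k\to\mathcal{Q}$, which is the claim. The main obstacle is the decay step $\mathcal{E}_k\to\mathcal{I}_n$: the hypothesis \eqref{ordering} is stated for the \emph{Frobenius norms} of the eigentubes, whereas the slice-wise scaling $(\widehat{\bm{\lambda}}_i^{(s)}/\widehat{\bm{\lambda}}_j^{(s)})^k$ is governed by the per-slice magnitudes $|\widehat{\bm{\lambda}}_i^{(s)}|$. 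Since eigentubes are defined so that $|\widehat{\bm{\lambda}}_i^{(s)}|\geq|\widehat{\bm{\lambda}}_{i+1}^{(s)}|$ in every slice, the cleanest rigorous route is to run the entire argument frontal slice by frontal slice, invoking the classical matrix QR convergence theorem for each $\widehat{\mathcal{A}}^{(s)}$ under strict per-slice separation of eigenvalue magnitudes, and then reassemble $\mathcal{D}_k$ and pass to the limit through the inverse FFT; this reduction also supplies the continuity of the QR factors used above.
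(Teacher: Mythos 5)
Your proposal follows essentially the same route as the paper's proof: insert $\bm{\Sigma}^{-k}*\bm{\Sigma}^{k}$ to form $\bm{\Sigma}^{k}*\mathcal{L}*\bm{\Sigma}^{-k}\to\mathcal{I}$, take a t-QR factorization of the resulting near-identity middle factor, and identify $\breve{\mathcal{Q}}_k$ by comparing with the factorization \eqref{eq 2.20} of $\mathcal{A}^k$ up to an invertible f-diagonal factor $\mathcal{D}_k$. Your closing observation that the decay of $(\widehat{\bm{\lambda}}_i^{(s)}/\widehat{\bm{\lambda}}_j^{(s)})^k$ is governed by strict per-slice separation of the eigenvalue magnitudes rather than by the Frobenius-norm ordering \eqref{ordering} alone is a point the paper's proof passes over silently, and your slice-by-slice reduction is the cleaner way to close it.
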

\smallskip

\begin{proof}
We have 
\[
\begin{split}
\mathcal{A}^k = \mathcal{X}* \bm{\Sigma}^k * \mathcal{X}^{-1}
&= \mathcal{Q}*\mathcal{R}*\bm{\Sigma}^k * \mathcal{L}*\mathcal{U}\\
&= \mathcal{Q}*\mathcal{R}*\bm{\Sigma}^k * \mathcal{L}*\bm{\Sigma}^{-k}*\bm{\Sigma}^{k}*
   \mathcal{U},
\end{split}
\]
and we note that the elements of $\bm{\Sigma}^{k}*\mathcal{L}*\bm{\Sigma}^{-k}$ are 
given by
\[
\bm{\Sigma}^{k}*\mathcal{L}*\bm{\Sigma}^{-k}(i,j,:)=\begin{cases}
\mathcal{L}(i,j,:)*\left(\bm{\lambda}_i/\bm{\lambda}_j\right)^{k}\;\; \text{if} \; i>j,\\
	\textbf{e}\;\; \text{if} \; i=j,\\
	\textbf{0} \;\; \text{otherwise}.
\end{cases}
\]
Then by using the ordering \eqref{ordering} of the eigentubes, the tensor  
$\bm{\Sigma}^{k}*\mathcal{L}*\bm{\Sigma}^{-k}$ converges to $\mathcal{I}$ as $k$ 
increases. Thus, $\bm{\Sigma}^{k}*\mathcal{L}*\bm{\Sigma}^{-k}$ can be written as 
$\mathcal{I}+ \mathcal{E}_k$, where $\mathcal{E}_k \to \textbf{0}$ as $k$ increases.

Since $\mathcal{X}$ is invertible, $\mathcal{R}$ also is invertible. Therefore 
\[
\begin{split}
\mathcal{A}^k&=\mathcal{Q}*\mathcal{R}*\left(\mathcal{I}+\mathcal{E}_k\right)*
\bm{\Sigma}^k*\mathcal{U}\nonumber\\
 &= \mathcal{Q}*\left( \mathcal{I}+\mathcal{R}*\mathcal{E}_k * \mathcal{R}^{-1}\right)*
 \mathcal{R}*\bm{\Sigma}^k *\mathcal{U}.
\end{split}
\]
Introduce the t-QR factorization $\mathcal{Q}_k' *\mathcal{R}'_k=
\mathcal{I}+\mathcal{R}*\bm{\Sigma}_k *\mathcal{R}^{-1}$. Then
\[
\mathcal{A}^k= \mathcal{Q}*\mathcal{Q}_k' * \mathcal{R}_k' * 
\mathcal{R}*\bm{\Sigma}^k*\mathcal{U}.
\]
Let $\bm{\delta}_1, \bm{\delta}_2, \ldots, \bm{\delta}_n$ denote the diagonal elements of 
the tensor $\mathcal{R}_k' * \mathcal{R}* \bm{\Sigma}^k * \mathcal{U}$. Since the elements 
on the diagonals of the tensors $\mathcal{R}_k'$, $\mathcal{R}$, $\bm{\Sigma}^k$, and 
$\mathcal{U}$ are nonvanishing, it follows that the tensor
\[
\mathcal{D}_k = \tdiag\left( \bm{\delta}_1, \bm{\delta}_2, \ldots, \bm{\delta}_n 
\right)\in \K^{n\times n}_p
\]
is invertible. We therefore can write
\[
\mathcal{A}^k = \left(\mathcal{Q}*\mathcal{Q}_k'*\mathcal{D}_k\right)*
\left(\mathcal{D}_k^{-1}*\mathcal{R}_k'*\mathcal{R}*\bm{\Sigma}^{k}*\mathcal{U}\right).
\]
Then, by using the properties of the {\tt t-QR} factorization, we conclude that 
\[
\breve{\mathcal{Q}}_k=\mathcal{Q}*\mathcal{Q}_k' * \mathcal{D}_k.
\]
Moreover, since $\mathcal{Q}_k'$ converges to $\mathcal{I}$ as $k$ increases, the tensor 
sequence
\[
\breve{\mathcal{Q}}_k * \mathcal{D}_k^{-1} = \mathcal{Q}* \mathcal{Q}_k',\quad 
k=1,2,\ldots~,
\]
converges to $\mathcal{Q}$.

Since the lateral slices of the tensor $\mathcal{Q}$ are the t-Schur lateral slices of 
$\mathcal{A}$, it follows that the scaled t-Schur lateral slices of 
$\mathcal{A}_k$ ($\breve{\mathcal{Q}}_k$) with elements in $\K_p$ converge to the lateral 
slices of $\mathcal{A}$ ($\mathcal{Q}$) as $k$ increases. Consequently,
$\mathcal{A}_k=\breve{\mathcal{Q}}_k^H * \mathcal{A}* \breve{\mathcal{Q}}_k$ converges to
the upper f-triangular tensor related to the t-Schur lateral slices. 
\end{proof}
	
\smallskip
The rate of convergence of Algorithm \ref{alg 6} can be accelerated by introducing 
shifts, i.e., by replacing lines 2 and 3 of the algorithm by 
$\mathcal{A}_k-\bm{\sigma}_k*\mathcal{I}_p=\mathcal{Q}_k * \mathcal{R}_k$ and
$\mathcal{A}_{k+1}=\mathcal{R}_k * \mathcal{Q}_k+\bm{\sigma}_k*\mathcal{I}_p$,
respectively, for suitable tubal shifts $\bm{\sigma}_k$. Moreover, the computational
cost of each iteration can be reduced by first transforming $\mathcal{A}_0$ to 
f-upper-Hessenberg form. These modifications are described by Algorithm \ref{alg 7}.
	
\begin{algorithm}[H]
\caption{A shifted {\tt t-QR} algorithm with f-Hessenberg reduction}\label{alg 7}
\textbf{Input:} $\mathcal{A}_0=\mathcal{A}\in \R^{p\times p}_n$, a shift tube 
$\bm{\sigma}\in\K_n$, $r=p$, $\epsilon>0$.\\
\textbf{Output:} The eigentubes of $\mathcal{A}$.
\begin{algorithmic}[1]
\STATE $[\mathcal{P},\mathcal{H}_0]={\tt t-hess}(\mathcal{A}_0)$, 
with $\mathcal{P}*\mathcal{H}_0 *\mathcal{P}^H=\mathcal{A}_0$
\FOR {$r>1$ until convergence}
\STATE $\mathcal{H}_{k}-\bm{\sigma}*\mathcal{I}_p=\mathcal{Q}_k*\mathcal{R}_k$\quad
(Compute QR factorization)
\STATE $\mathcal{A}_{k+1}=\mathcal{R}_k*\mathcal{Q}_*+\bm{\sigma}*\mathcal{I}_p$
		\IF{$\left\Vert \mathcal{A}_{k+1}(r,r-1,:)\right\Vert_F\leq \epsilon$}
		\STATE $r=r-1$
		\ENDIF
		\ENDFOR
	\end{algorithmic}
\end{algorithm}

One way to determine suitable shifts $\bm{\sigma}$ is to let the shift in each iteration 
be $\mathcal{A}_{k}(r,r,:)$. We should mention that sometimes we need to reorder the 
tubes $\mathcal{A}_{k+1}(j,j-1,:)$, $j=2,3,\ldots,r$, determined in each iteration of 
Algorithm \ref{alg 7} before determining the shift. This issue will be discussed in detail
elsewhere.

\section{Numerical experiments}\label{sec 6}
This section illustrates the performance of the algorithms discussed in the previous
section. The purpose of the examples is to demonstrate that the methods work similarly
as the analogous methods for matrices. We used the following tensors in our experiments: 
The first tensor is 
of size $10\times 10\times 3$ and has the frontal slices
\[
\mathcal{A}(:,:,i)=\delta_i   \; {\tt tridiag}(-1,2,-1),\]	
with $\delta_i=10^{i-1}$ for $1\leq i\leq 3$. The second tensor in our experiments is the
stochastic tensor $\mathcal{C}\in\K^{4\times 4}_4$ with frontal slices
{\small
\[
\mathcal{C}(:,:,1)= \begin{pmatrix}
	0.2091 &   0.2834   & 0.2194  &  0.1830\\
	0.3371   & 0.3997 &   0.3219 &   0.3377\\
	0.3265  &  0.0560  &  0.3119  &  0.2961\\
	0.1273 &   0.2608 &   0.1468   & 0.1832
\end{pmatrix}, \;\mathcal{C}(:,:,2)=\begin{pmatrix}
	0.1952   & 0.2695  &  0.2055 &   0.1690\\
	0.3336   & 0.3962 &   0.3184 &   0.3342\\
	0.2954 &   0.0249 &   0.2808 &   0.2650\\
	0.1758 &   0.3094 &   0.1953 &   0.2318
\end{pmatrix},
\]	
\[
\mathcal{C}(:,:,3)=\begin{pmatrix}
	0.3145  &  0.3887 &   0.3248 &   0.2883\\
	0.0603  &  0.1230 &   0.0451  &  0.0609\\
	0.3960  &  0.1255 &   0.3814 &   0.3656\\
	0.2293   & 0.3628 &   0.2487 &   0.2852
\end{pmatrix},\; \mathcal{C}(:,:,4)=\begin{pmatrix}
	0.1686  &  0.2429 &   0.1789 &   0.1425\\
	0.3553  &  0.4180 &   0.3402 &   0.3559\\
	0.3189   &  0.0484 &   0.3043 &   0.2885\\
	0.1571   & 0.2907  &  0.1766  &  0.2131
\end{pmatrix}.
\]
}
We also used the tensor ${\tt complex}(10,10,10)\in\K^{10\times 10}_{10}$, whose entries 
have normally distributed real and imaginary parts with mean zero and variance one, as 
well as the real tensor ${\tt real}(10,10,10)$, whose entries are normally distributed 
with mean zero and variance one, and with real eigentubes. Table \ref{tab 1} lists the 
abbreviations of the algorithms used in the experiments.

\begin{table}[H]
\centering\small\addtolength{\tabcolsep}{-5pt}
\centering
\begin{tabular}{l|c|c}
\hline Method & Abbreviation & Algorithm  \\ \hline 
t-power method & t-PM & Algorithm \ref{alg 3}\\ \hline 
shifted inverse t-power method& t-SIPM & Algorithm \ref{alg 4}\\ \hline 
t-deflation method& t-DM &  \\ \hline 
t-subspace iteration& t-SI & Algorithm \ref{alg 5}\\ \hline 
t-QR algorithm with f-Hessenberg reduction and shifts~~& t-QRHS & Algorithm \ref{alg 7}\\ \hline
\end{tabular}
\caption{Abbreviations for the algorithms used in the experiments.} \label{tab 1}
\end{table}

Let the diagonal entries of the f-diagonal tensor $\mathcal{D}_k\in\K^{k\times k}_n$ be
the exact eigentubes of the tensors considered, and let the diagonal entries of the
f-diagonal tensor $\widetilde{\mathcal{D}}_k\in\K^{k\times k}_n$ contain the corresponding
approximations computed by the methods. We report the error
\begin{equation} \label{eq err}
	\text{Error}=\Vert\widetilde{\mathcal{D}_k}- \mathcal{D}_k\Vert_F.
\end{equation}

All computations were carried out on a laptop computer with an 2.3 GHz Intel Core i5 
processor and 8 GB of memory using MATLAB 2018a.

\subsection{The tensor t-power method and the inverse t-power method}
This subsection shows the performance of the t-power method and the closely related 
inverse t-power method. These methods are implemented by Algorithms \ref{alg 3} and
\ref{alg 4}. The t-power method determines an approximation of the eigenpair
with the eigentube of largest norm, while the inverse t-power method computes an
approximation of the eigenpair with the eigentube closest to a specified tubal shift.

We first consider the t-power method. Generically, Algorithms \ref{alg 3} determines a 
sequence $\{\vec{\mathcal{V}}_k\}_{k\geq 1}$ of approximations of an eigenslice that is 
associated with the eigentube of largest norm, as well of the sequence 
$\{\bm{\alpha}_k\}_{k\geq 1}$ of approximations of the largest eigentube; see Theorem 
\ref{coro 4}. We will use the stopping criterion  
\[
\Vert \vec{\mathcal{V}}_{k+1}-\vec{\mathcal{V}}_{k}\Vert_F \leq 
\text{tol},\; \text{and}\; \; \left\Vert \bm{\alpha}_{k+1}-\bm{\alpha}_{k}\right\Vert_F
\leq \text{tol},
\]
where we set $\text{tol}=10^{-15}$ in our tests and allow at most $\text{Itermax}=3000$ 
iterations. The initial lateral slice $\vec{\mathcal{V}}_0$ is chosen to have normally 
distributed entries; the entries are chosen to be real if the tensor is real and the 
desired eigenslice is real; otherwise $\vec{\mathcal{V}}_0$ is complex with entries that
have normally distributed real and imaginary parts. Let 
$\{\vec{\mathcal{U}},\bm{\lambda}\}$ denote the computed approximation of the desired
eigenpair of a tensor $\mathcal{F}\in\mathbb{K}^{p\times p}_n$. Then we evaluate the 
residual error norm
\[
\text{Res.norm}=\Vert\mathcal{F}*\vec{\mathcal{U}}-\vec{\mathcal{U}}*\bm{\lambda}\Vert_F.
\].
Table \ref{tab 2} reports some computed results. As can be expected, the t-power method 
may require many iterations to satisfy the convergence criterion.

\begin{table}[H]
\centering\small\addtolength{\tabcolsep}{-3pt}
\centering
\begin{tabular}{c|c|c|c|c|c}  
\hline  
Tensor & Method & Res.norm& Error & Iter & CPU time \\ \hline 
$\mathcal{A}$ &t-PM & 2.14e-15   & 2.27e-15     & 537 & 0.382 \\ \hline 
$\mathcal{C}$ &t-PM & 3.70e-14  & 1.06e-14  & 2035   &  0.886    \\ \hline
${\tt complex(10,10,10)}$ & t-PM &  3.47e-14  & 2.56e-14  & 849 & 0.707 \\ \hline
\end{tabular}	
\caption{Residual norm, Error, number of iterations, and CPU time for t-PM.}\label{tab 2}
\end{table}

The inverse t-power method is implemented by Algorithm \ref{alg 4} with a stopping 
criterion that is analogous to the one for Algorithm \ref{alg 3}. Here we seek to 
determine the eigenslice associated with the eigentube closest to the tubal shift 
$\bm{\sigma}$.  The shifts $\bm{\sigma}$ used for for the tensors $\mathcal{A}$ and 
${\tt complex(10,10,10)}$ are the tube with $\bm{\sigma}(1,1,1)=10^{-5}$ and 
remaining entries zero, and the tube with $\bm{\sigma}(1,1,1)=10^{-3}$ and remaining 
entries zero, respectively.

\begin{table}[h!]
\centering
\centering\small\addtolength{\tabcolsep}{-3pt}
\begin{tabular}{c|c|c|c|c}
\hline
Tensor & Method & Res. norm &Error & Iter\\ \hline
$\mathcal{A}$  &t-SIPM & 3.81e-15& 2.91e-16  &32  \\ \hline 
${\tt complex(10,10,10)}$  & t-SIPM &  4.33e-16 &  6.65e-15 & 422 \\ \hline
\end{tabular}
\captionof{table}{Residual norm, Error, and number of iterations for t-SIPM.}\label{tab 3}
\end{table}

\subsection{Deflation}
We examine the performance of the previously discussed deflation techniques in order to 
approximate more than one eigenpair. Recall that the deflation methods differ in how we 
choose the lateral slice $\vec{\mathcal{V}}$;\; see \eqref{deflation}. We choose 
$\vec{\mathcal V}$ as the $i^{th}$ eigenslice (DE), as the $i^{th}$ t-Schur lateral slice
(DS), or as the $i^{th}$ left eigenslice (DLE), and we report the results using DE, DS, 
and DLE methods for the tensors $\mathcal{A}$ and ${\tt real}\left(10,10,10\right)$. To 
approximate the $i^{th}$ eigentube, with $i>1$, we apply the power method to the new 
tensor
$\mathcal{A}_i = \mathcal{A}_{i-1}-\bm{\lambda}_i * \vec{\mathcal{U}}_i *
\vec{\mathcal{V}}_i^H,
$
where $\mathcal{A}_1=\mathcal{A}$, $\bm{\lambda}_i$ is the previous approximated eigentube 
and $\vec{\mathcal{V}}_i$ is chosen as the previously approximated eigenslice (for the DE
method) or as the left eigenslice (for the DLE method). We choose
$
\mathcal{A}_i = \mathcal{A}_{i-1}-\bm{\lambda}_i * \vec{\mathcal{Q}}_i *
\vec{\mathcal{Q}}_i^H,
$
where $\vec{\mathcal{Q}}_i$ is the $i^{th}$ t-Schur lateral slice, when applying the DS 
method. We approximate the first $k$ eigenpairs of a given tensor 
$\mathcal{F}\in\mathbb{K}^{p\times p}_n$ and evaluate the residual norm
\[
\text{Res.norm}=\Vert \mathcal{F}*\mathcal{U}_k -\mathcal{U}_k*\mathcal{D}_k\Vert_F,
\]
where $\mathcal{U}_k\in\mathbb{K}^{p\times k}_n$ contains computed approximations of the 
eigenslices associated with the first eigentubes of $\mathcal{F}$, and 
$\mathcal{D}_k\in\mathbb{K}^{k\times k}_n$ is an f-diagonal tensor which on its diagonal 
has the computed approximations of the first $k$ eigentubes of $\mathcal{F}$.

Table \ref{tab 5} reports the Error and the residual norm when approximating several 
eigentubes of the tensors $\mathcal{A}$ and ${\tt real}(10,10,10)$ by the DE, DS, and DLE
methods. Our aim is to approximate Num>1 eigentubes of the tensor.

\begin{table}[h!]
\centering\small\addtolength{\tabcolsep}{-5pt}
\begin{tabular}{c|c|c|c|c|c|c|c|c|c|c}
\hline
\multirow{2}{*}{Tensor} & \multirow{2}{*}{Num} & \multicolumn{3}{l|}{DE} & 
\multicolumn{3}{l|}{DLE}  & \multicolumn{3}{l}{DS}\\ 
\cline{3-11} & & Error & Res. norm & time & Error & Res. norm & time & Error & Res. norm &
time \\ \hline
\multirow{2}{*}{$\mathcal{A}$}& 3 &  4.58e-15 & 7.13e-15 & 0.675 &4.19e-15 
&7.51e-15& 1.237 & 4.57e-15&   3.27e-15 & 0.668\\ 
& 5 &  4.79e-15  &6.87e-15& 0.975 &4.58e-15 &7.63e-15 &1.408 & 4.83e-15  & 3.43e-15& 0.840\\ \hline
\multirow{2}{*}{${\tt real}(10,10,10)$}& 4 &8.06e-13 & 2.44e-13 & 0.642 &8.09e-13& 2.63e-13 &1.246 & 8.19e-13 & 2.54e-13& 0.641 \\ 
 & 6& 8.10e-13 & 2.98e-13 &1.279 &8.30e-13 &2.91e-13 &1.955 & 8.21e-13&  3.08e-13& 1.018\\
\hline
\end{tabular}
\caption{The Error, the residual norm, and the CPU time of DE, DS, and DLE methods, when
applied to the tensors $\mathcal{A}$ and ${\tt real}\left(10,10,10\right)$ to 
approximate Num eigentubes.}\label{tab 5}
\end{table}

Table \ref{tab 5}\; shows deflation with DS shifts to be faster than deflation with DE or 
DLE shifts. Figure \ref{fig def}\; illustrates the convergence behavior for each eigenpair.
We can see that the number of iterations required is the same for each eigenpair for all
shifts DE, DEL, and DS.

\begin{figure}[h!]
	\centering
	\centering\small\addtolength{\tabcolsep}{-3pt}
	\begin{tabular}{ccc}
		\includegraphics[width=1.8in, height=1.7in]{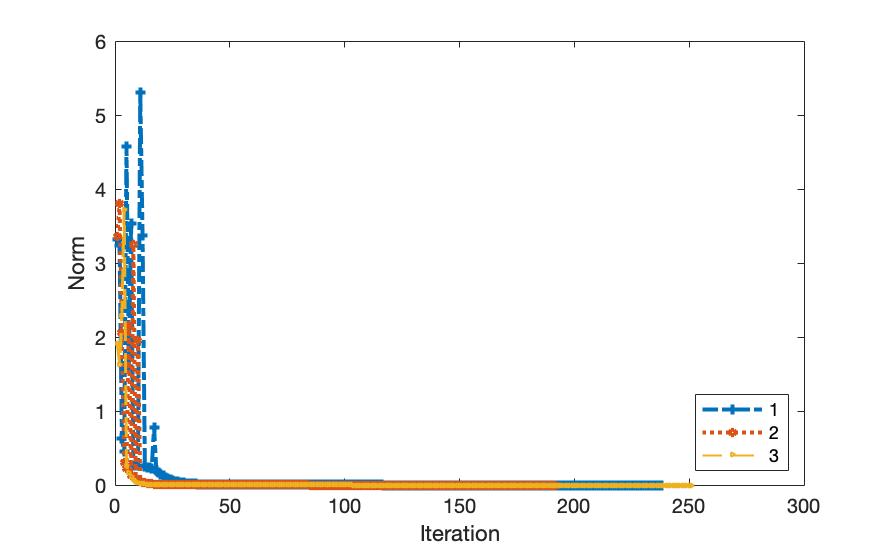}&
		\includegraphics[width=1.8in, height=1.7in]{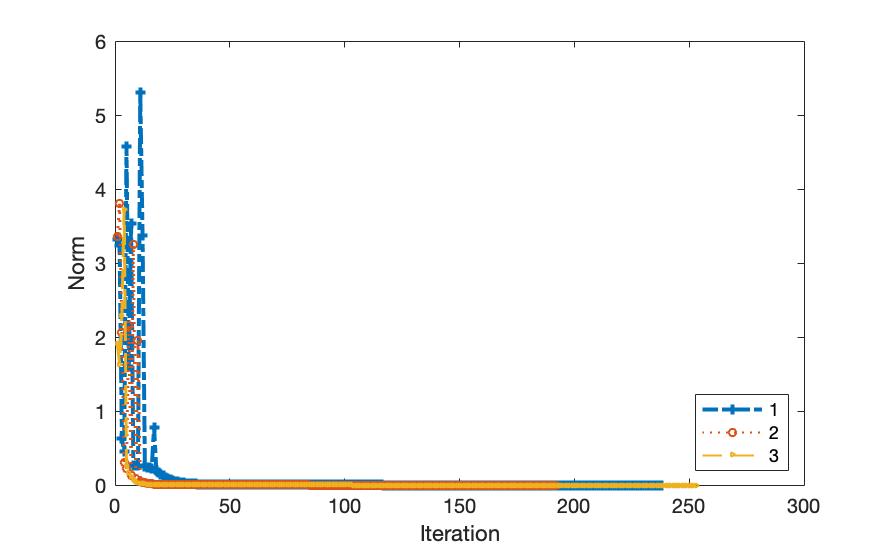}&
		\includegraphics[width=1.8in, height=1.7in]{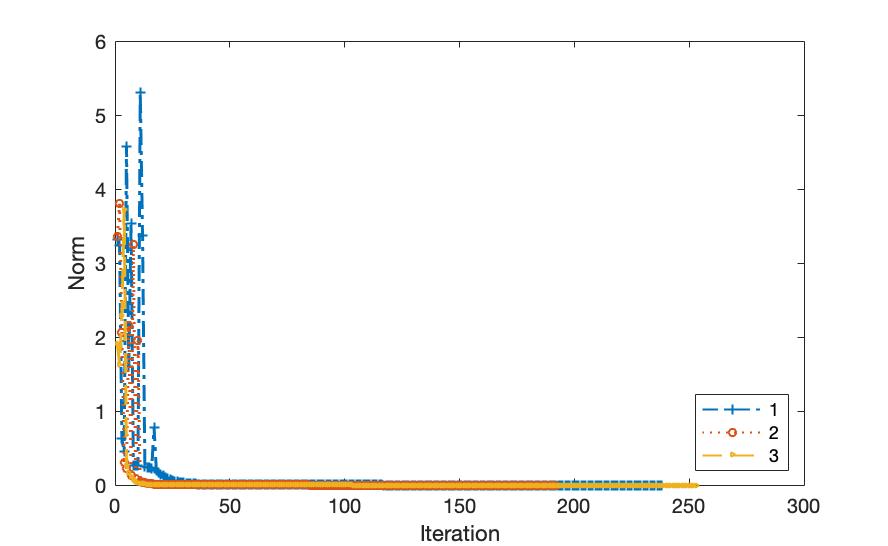}
	\end{tabular}
	\captionof{figure}{The behavior of the residual norm when we aimed approximate the first three eigenpairs of the tensor ${\tt real}\left(10,10,10\right)$, by the methods DE, DLE, and DS, from the left to the right.}\label{fig def}
\end{figure}

\subsection{Tensor subspace iteration}
This subsection illustrates the performance of the t-subspace iteration method when 
applied to approximate eigenslices associated with the $m$ eigentubes of largest norm
of a tensor $\mathcal{A}\in\K^{p\times p}_n$. The sequence of approximate eigenslices
$\{\mathcal{X}_k\}_{k\geq 1}$ generated by Algorithm \ref{alg 5}\ generically
converges to the $\tspan$ of the t-Schur lateral slices associated with the $m$ largest
eigentubes. Thus, the tensors
\[
\mathcal{R}_k=\mathcal{X}_k^H *\mathcal{A}*\mathcal{X}_k,\quad k=1,2,\ldots~,
\]
converge to an f-upper triangular tensor. This suggests the convergence criterion 
\begin{equation}\label{eq errplot}
\text{error}(k)={\tt t\text{-}tril}\left(\mathcal{R}_{k+1}-\mathcal{R}_k \right) \leq 
\text{tol},
\end{equation}
where the function ${\tt t\text{-}tril}$ returns the f-lower triangular portion of its
tensor argument. The tolerance used in all the experiments with the t-subspace method 
is $\text{tol}=10^{-15}$, and the maximum number of iterations is set to 
$\text{Itermax}=3000$. The initial tensor $\mathcal{X}_0$ has random entries chosen 
similarly as for the experiments with the t-power method. 

To approximate the first $m$ eigentubes, we compute the residual norm associated with 
the third-order tensor $\mathcal{F}\in\K^{p\times p}_n$,
\begin{equation}\label{resnorm2}
\text{Res.norm}=\Vert\mathcal{F}*\mathcal{U}_m -\mathcal{U}_m*\mathcal{R}_m\Vert_F,
\end{equation}
where $\mathcal{U}_m\in\K^{p\times m}_n$ is made up of computed approximations the t-Schur
lateral slices associated the $m$ largest eigentubes of $\mathcal{F}$, and 
$\mathcal{R}_m\in\K^{m\times m}_n$ is an upper  (or quasi-upper) f-triangular tensor, 
whose f-diagonal entries are approximations of the $m$ largest eigentubes of 
$\mathcal{F}$. 

Table \ref{tab s1} displays the computed results. We notice that t-MSI is the same as t-SI
when the power index $q$ is $1$. Figure \ref{fig s1} shows plots of the error 
\eqref{eq errplot} and of the residual norm at each iteration when seeking to approximate 
the four first eigentubes of the tensor ${\tt complex}(10,10,10)$ for the power indices 
$q\in\{1,4,6\}$. Table \ref{tab s1} indicates that increasing the power index $q$ has no 
effect on the accuracy, but reduces the total number of iterations.

\begin{table}[h!]
	\centering\small\addtolength{\tabcolsep}{-3pt}
	\centering
	\begin{tabular}{c|c|c|c|c|c}
		\hline Tensor & $q$ &  Error & Res.norm & Iter & CPU time\\
		\hline \multirow{2}{*}{$\mathcal{A}$} & 1 & 4.58e-15  & 2.62e-15  &490 & 0.288  \\
		& 4& 4.58e-15  & 2.36e-15  &129  &0.137  \\
		\hline \multirow{2}{*}{${\tt complex}(10,10,10)$} & 1& 7.40e-11& 2.10e-14&3000& 2.775\\
		& 4& 9.45e-14  &2.40e-14&956&1.090\\
		\hline 
	\end{tabular}
\captionof{table}{The values of the Error, the residual norm, the number of iterations, 
and the CPU time for different values of the power index $q$ obtained when using t-MSI for
computing the first four eigentubes.}\label{tab s1}
\end{table}

\begin{figure}[h!]
	\centering
	\centering\small\addtolength{\tabcolsep}{-3pt}
	\begin{tabular}{cc}
		\includegraphics[width=2.7in, height=2in]{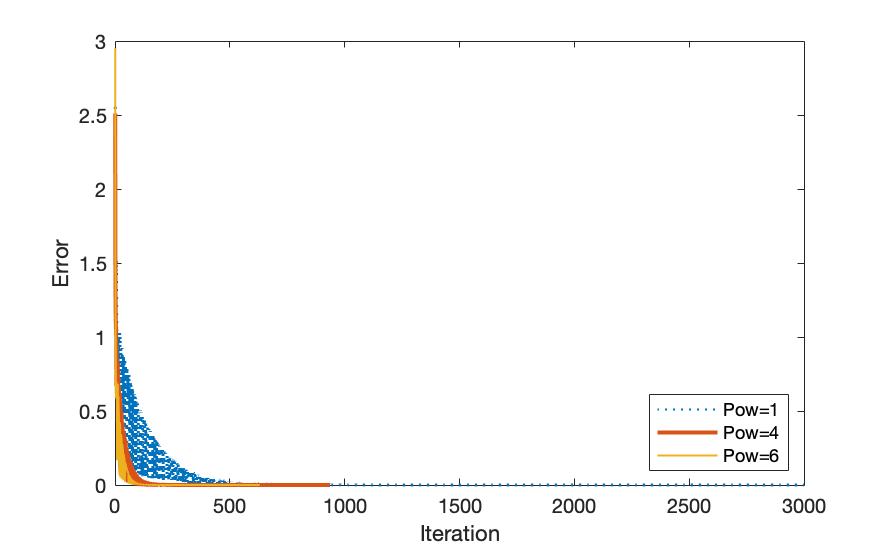}&
		\includegraphics[width=2.7in, height=2in]{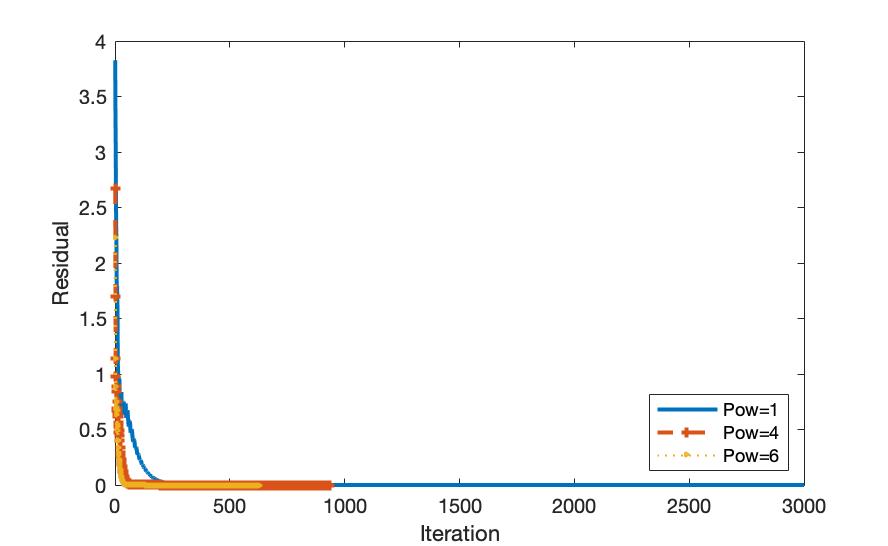}
	\end{tabular}
\captionof{figure}{The curves representing the evolution of the error and the residual 
norm for algorithm t-MSI with different values of the power index $q\in\{1,4,6\}$ applied
to the tensor ${\tt complex}\left(10,10,10\right)$, when we desire approximations of the 
first four eigentubes.}\label{fig s1}
\end{figure}

Figure \ref{fig s1} depicts the error norm and the residual norm for the t-MSI algorithm
for the values $1,\;4,\;6$ of the power index $q$. The results show fast convergence.

\subsection{The t-QR algorithm}
This subsection considers the t-QR algorithm t-QRHS. The algorithm is applied to the 
tensors $\mathcal{A}$ and $\mathcal{C}$. The tolerance is set to machine epsilon 
$2.2204\; 10^{-16}$; the maximum number of iterations allowed for Algorithm\ref{alg 7} is 
$\text{Itermax}=30000$. We compute the residual norm defined by expression 
\eqref{resnorm2}.

Table \ref{tab 10} reports the error norm \eqref{eq err}, the residual norm, the 
CPU time, and the number of iterations required to satisfy the stopping criterion of
the t-QRHS algorithm. The shifts have already been discussed after Algorithm \ref{alg 7}, 
but since the last two eigentubes of the tensor $\mathcal{C}$ are complex, we used in each
iteration the shift $\bm{\sigma}=\mathcal{H}_k(r,r,:)+i\mathcal{H}_k(r,r,:)$ for this
tensor.

\begin{table}[h!]
\centering\small\addtolength{\tabcolsep}{-3pt}
\centering
\begin{tabular}{c|c|c|c|c|r}
\hline 
Tensor & Method & Error & Res.norm & CPU time & Iter \\
	\hline {$\mathcal{A}$} & 
		t-QRHS & 1.3984e-14   & 1.5373e-14  &  0.169   & 61    \\
		\hline {$\mathcal{C}$} & 
		t-QRHS & 9.0322e-15 & 4.5962e-15  &  0.232  & 128  \\
		\hline
\end{tabular}
\captionof{table}{The Error, the residual norm, the CPU time, and the number of 
iterations needed for convergence of t-QRHS when applied to the tensors $\mathcal{A}$ and 
$\mathcal{C}$.}\label{tab 10}
\end{table}

\section{Conclusion and extensions}\label{sec7}
This work describes generalizations of eigenvalues and eigenvectors, referred to as
eigentubes and eigenslices, respectively, for third-order tensors using the t-product. 
Some properties of eigentubes and eigenslices are shown, and several methods for 
computing eigentubes and eigenslices are presented including the t-power method, the
t-inverse iteration method, and t-subspace iteration, as well as the t-QR algorithm. We 
also introduce deflation methods. Numerical tests illustrate the performance of these
methods.


\begin{thebibliography}{99}
\bibitem{kernfeld2015tensor}
S. Aeron, E. Kernfeld, M. Kilmer, Tensor-tensor products with invertible linear 
transforms, Linear Algebra Appl., 485, 545--570 (2015).
	
\bibitem{kilmer-compression}
H. Avron, L. Horesh, M. E. Kilmer, E. Newman, Tensor-tensor algebra for optimal 
representation and compression of multiway data, Proceedings of the National Academy of 
Sciences, National Acad Sciences, 188, 28 e2015851118 (2021).
	
\bibitem{kolda2009tensor}
B. W. Bader, T. Kolda, Tensor decompositions and applications, SIAM Rev., 51, 455--500 
(2009).

\bibitem{BNKR}
F. P. A. Beik, M. Najafi-Kalyani, L. Reichel, Iterative Tikhonov regularization of tensor 
equations based on the Arnoldi process and some of its generalizations, Appl. Numer. 
Math., 151, 425--447 (2020). 
	
\bibitem{boubekraoui}
A. H. Bentbib, M. Boubekraoui, K. Jbilou, Vector Aitken extrapolation method for 
multilinear PageRank computations, J. Appl. Math. Comput., 1--28 (2022).
	
\bibitem{elha1}
A. H. Bentbib, A. El Hachimi, K. Jbilou, A. Ratnani, A tensor regularized nuclear norm 
method for image and video completion, J. Optim. Theory Appl., 192, 401--425, (2022).
	
\bibitem{elha2}
A. H. Bentbib, A. El Hachimi, K. Jbilou, A. Ratnani, Fast multidimensional completion and 
principal component analysis methods via the cosine product, Calcolo, 59, 1--33, (2022).

\bibitem{BKS}
A. H. Bentbib, A. Khouia, H. Sadok, The LSQR method for solving tensor least-squares 
problems, Electron. Trans. Numer. Anal., 55, 92--111, (2022).
	
\bibitem{BKS2}
A. H. Bentbib, A. Khouia, H. Sadok, Color image and video restoration using tensor CP
decomposition, BIT Numer. Math., 62, 1257--1278 (2022);

\bibitem{BP}
S. Brin and L. Page, The anatomy of a large-scale hypertextual web search engine, Comput. 
Networks ISDN Systems, 30, 107--117 (1998).

\bibitem{Einstein_product}
C. Bu, L. Sun, Y. Wei, B. Zheng, Moore--Penrose inverse of tensors via Einstein product, 
Linear and Multilinear Algebra, 64, 4, 686--698 (2016).
	
	
\bibitem{C-eigenvalue}
Z. Chen, C. Li, S. Li, J. Zhao, Eigenvalue bounds of third-order tensors via the minimax 
eigenvalue of symmetric matrices, Comput. Appl. Math., 39, 1--14 (2020).

\bibitem{CRZT}
S. Cipolla, M. Redivo-Zaglia, F. Tudisco, Shifted and extrapolated power methods for 
tensor $\ell^p$-eigenpairs, Electron. Trans. Numer. Anal., 53, 1--27 (2020).
	
\bibitem{canonical_unfold}
M. Ding, T. -Z. Huang, T. -Y. Ji, X. -L. Zhao, J. -H. Yang, Low-rank tensor completion 
using matrix factorization based on tensor train rank and total variation, J. Sci. 
Comput., 81, 941--964 (2019).
	
\bibitem{dufrenois}
F. Dufrenois, A. El Ichi, K. Jbilou, Multilinear discriminant analysis using tensor-tensor
products, J. Math. Modeling, in press.
	
\bibitem{elguide1}
M. El Guide, A. El Ichi, K. Jbilou, R. Sadaka. On tensor GMRES and Golub-Kahan methods via
the t-product for color image processing, Electron. J. Linear Algebra, 37, 524--543 
(2021).
	
\bibitem{elguide2}
M. El Guide, A. El Ichi, K. Jbilou, F. P. A. Beik, Tensor Krylov subspace methods via the 
Einstein product with applications to image and video processing, Appl. Numer. Math.,  
181, 347--363 (2022).
	
\bibitem{elha3}
A. El Hachimi, K. Jbilou, A. Ratnani, L. Reichel, A tensor bidiagonalisation method for 
higher-order singular value decomposition with applications, submitted for publication.
	
\bibitem{elhanr1}
S. El-Halouy, S. Noschese, L. Reichel, Perron communicability and sensitivity of 
multilayer networks, Numer. Algorithms, 92, 597--617 (2023).


\bibitem{elichi1}
M. El Guide, A. El Ichi, K. Jbilou, Discrete cosine transform LSQR methods for 
multidimensional ill-posed problems, J. Math. Model., 10, 21--37 (2022).
	
	
	
\bibitem{pagerank}
D. F. Gleich, L.-H. Lim,  Y. Yu, Multilinear pagerank, SIAM J. Matrix Anal. Appl., 36, 
1507--1541 (2015).
	
\bibitem{golub2000eigenvalue}
G. H. Golub, H. A. Van der Vorst, Eigenvalue computation in the 20th century, J. Comput.
Appl. Math., 123, 35--65, (2000).
	
\bibitem{Golub}
G. H. Golub, C. F. Van Loan, Matrix Computations, 4th ed., Johns Hopkins University 
Press, Baltimore, 2013.
	

\bibitem{He}
P. Henrici, Applied and Computational Complex Analysis, vol. 3, Wiley, New York, 1986.
	
	
\bibitem{kilmer2013third}
M. E. Kilmer, K. Braman, N. Hao, R. C. Hoover, Third-order tensors as operators on 
matrices: A theoretical and computational framework with applications in imaging, SIAM J.
Matrix Anal. Appl., 34, 148--172 (2013).
	
\bibitem{kilmer1}
M. E. Kilmer, C. D. Martin, Factorization strategies for third-order tensors, Linear
Algebra Appl., 435, 641–-658 (2011).
	
	
	
\bibitem{lehoucq}
R. Lehoucq, J. A. Scott, An evaluation of subspace iteration software for sparse 
nonsymmetric eigenproblems, SCAN-9605011, (1996).

	

\bibitem{MSL}
C. D. Martin, R. Shafer, B. LaRue, An order-$p$ tensor factorization with applications in
imaging, SIAM J. Sci. Comput., 35, A474--A490 (2013).
	

\bibitem{QL}
L. Qi, Z. Luo, Tensor Analyis: Spectral Thoery and Special Tensors, SIAM, Philadelphia, 
2017.
	

\bibitem{RU1}
L. Reichel, U. O. Ugwu, Tensor Arnoldi-Tikhonov and GMRES-type methods for ill-posed 
problems with a t-product structure, J. Sci. Comput., 90, Art. 59, (2022).

\bibitem{RU2}
L. Reichel, U. O. Ugwu, The tensor Golub-Kahan-Tikhonov method applied to the solution 
of ill-posed problems with a t-product structure, Numer. Linear Algebra Appl., 29, Art 
e2412 (2022).

\bibitem{RU3}
L. Reichel, U. O. Ugwu, Weighted tensor Golub-Kahan-Tikhonov-type methods applied to 
image processing using a t-product, J. Comput. Appl. Math., 415, Art. 114488 (2022).
	
\bibitem{Rojo}
H. Rojo, O. Rojo, Some results on symmetric circulant matrices and on symmetric 
centrosymmetric matrices, Linear Algebra Appl., 391, 211--233 (2004).
	
	
\bibitem{Saad_Yousef}
Y. Saad, Numerical Methods for Large Eigenvalue Problems, 2nd ed., SIAM, Philadelphia, 
2011.

\bibitem{vonmises}
R. v. Mises, H. Pollaczek-Geiringer, Praktische Verfahren der Gleichungsaufl{\"o}sung,
ZAMM, 9, 152--164 (1929).
	
	
	
	
	
\end{thebibliography}
\end{document}